\newtheoremstyle{dotless}{}{}{\itshape}{}{\bfseries}{}{}{}
\theoremstyle{dotless}
\theoremstyle{plain}
\newtheorem{thm}{Theorem}[section]
\newtheorem{prop}[thm]{Proposition}
\newtheorem{cor}[thm]{Corollary}
\theoremstyle{definition}
\newtheorem{defn}[thm]{Definition}
\newtheorem{rem}[thm]{Remark}
\newtheorem{que}[thm]{Question}
\newcommand{\N} {\mathbb{N}}
\newcommand{\R} {\mathbb{R}}
\newcommand{\C} {\mathbb{C}}
\newcommand{\K} {\mathbb{K}}
\newcommand{\D} {\mathbb{D}}
\newcommand{\F} {\mathcal{F}(\Omega)}
\newcommand{\FE} {\mathcal{F}(\Omega,E)}
\newcommand{\FV} {\mathcal{F}\nu(\Omega)}
\newcommand{\FVE} {\mathcal{F}\nu(\Omega,E)}
\newcommand{\Feps} {\mathcal{F}_{\varepsilon}\nu(\Omega,E)}
\newcommand{\acx} {\operatorname{acx}}
\newcommand{\oacx} {\overline{\operatorname{acx}}}
\DeclareMathOperator{\id}{id}
\providecommand{\differential}{\mathrm{d}}
\renewcommand{\d}{\differential}
\begin{document}

\title[Extension]{Extension of vector-valued functions and weak-strong principles for differentiable functions of finite order}
\author[K.~Kruse]{Karsten Kruse\,\orcidlink{0000-0003-1864-4915}}
\address{Hamburg University of Technology\\ Institute of Mathematics \\
Am Schwarzenberg-Campus~3 \\
21073 Hamburg \\
Germany}
\email{karsten.kruse@tuhh.de}

\subjclass[2020]{Primary 46E40, Secondary 46A03, 46E10}

\keywords{extension, vector-valued, $\varepsilon$-product, weight, weak-strong principle}

\date{\today}
\begin{abstract}
In this paper we study the problem of extending functions with values in a locally convex Hausdorff space $E$ 
over a field $\K$, which have weak extensions in a weighted Banach space $\mathcal{F}\nu(\Omega,\K)$ of 
scalar-valued functions on a set $\Omega$, to functions in a vector-valued counterpart $\FVE$ 
of $\mathcal{F}\nu(\Omega,\K)$. Our findings rely on a description of vector-valued functions 
as linear continuous operators and extend results of Frerick, Jord\'{a} and Wengenroth. 
As an application we derive weak-strong principles for continuously partially differentiable functions of finite order, 
vector-valued versions of Blaschke's convergence theorem for several spaces and Wolff type descriptions of dual spaces.
\end{abstract}
\maketitle
\section{Introduction}
This paper centres on the problem of extending a vector-valued function $f\colon \Lambda\to E$ from a 
subset $\Lambda\subset\Omega$ to a locally convex Hausdorff space $E$ if the scalar-valued functions 
$e'\circ f$ are extendable for each $e'\in G\subset E'$ under 
the constraint of preserving the properties, like holomorphy, of the scalar-valued extensions. 
This problem was considered, among others, by Grothendieck \cite{Grothendieck1953,Gro}, Bierstedt \cite{B2}, 
Gramsch \cite{Gramsch1977}, Grosse-Erdmann \cite{grosse-erdmann1992,grosse-erdmann2004}, 
Arendt and Nikolski \cite{Arendt2016,Arendt2000,Arendt2006}, 
Bonet, Frerick, Jord\'a and Wengenroth \cite{B/F/J,F/J,F/J/W,jorda2005,jorda2013} and us \cite{kruse2018_3}.
 
Often, the underlying idea to prove such an extension theorem is to use a representation of an $E$-valued function by 
a continuous linear operator. Namely, if $\F:=\mathcal{F}(\Omega,\K)$ is a locally convex Hausdorff space 
of scalar-valued functions on a set $\Omega$ such that the point evaluations $\delta_{x}$ at $x$ belong to 
the dual $\F'$ for each $x\in\Omega$, then the function $S(u)\colon\Omega\to E$ given by
$x\mapsto u(\delta_{x})$ is well-defined for every element $u$ of Schwartz' $\varepsilon$-product 
$\F\varepsilon E:=L_{e}(\F_{\kappa}',E)$ where the dual $\F'$ is equipped with the topology of uniform convergence 
on absolutely convex compacts subsets of $\F$, the space of continuous linear operators $L(\F_{\kappa}',E)$ 
is equipped with the topology of uniform convergence on the equicontinuous subsets of $\F_{\kappa}'$ 
and $E$ is a locally convex Hausdorff space over the field $\K$. 
In many cases the function $S(u)$ inherits properties of the functions in $\F$, e.g.\ 
if $\F=(\mathcal{O}(\Omega),\tau_{co})$ is the space of holomorphic functions on an open set $\Omega\subset\C$ 
equipped with the compact-open topology $\tau_{co}$, then the space of functions of the form $S(u)$ with 
$u\in(\mathcal{O}(\Omega),\tau_{co})\varepsilon E$ coincides with the space 
$\mathcal{O}(\Omega,E)$ of $E$-valued holomorphic functions if $E$ is locally complete. Even more is true, namely, 
that the map $S\colon(\mathcal{O}(\Omega),\tau_{co})\varepsilon E\to (\mathcal{O}(\Omega,E),\tau_{co})$ 
is a (topological) isomorphism (see \cite[p.\ 232]{B/F/J}). 
So suppose that there is a locally convex Hausdorff space $\FE$ 
of $E$-valued functions on $\Omega$ such that the map $S\colon \F\varepsilon E\to\FE$ is well-defined 
and at least a (topological) isomorphism into, i.e.\ to its range. 
The precise formulation of the extension problem from the beginning is the following question.

\begin{que}\label{que:weak_strong}
Let $\Lambda$ be a subset of $\Omega$ and $G$ a linear subspace of $E'$.
Let $f\colon\Lambda\to E$ be such that for every $e'\in G$,
the function $e'\circ f\colon\Lambda\to \K$ has an extension in $\F$.
When is there an extension $F\in\FE$ of $f$, i.e.\ $F_{\mid \Lambda}=f$\ ?
\end{que}

Even the case $\Lambda=\Omega$ is interesting because then the question is about properties of vector-valued functions 
and a positive answer is usually called a weak-strong principle. 
From the connection of $\F\varepsilon E$ and $\FE$ it is evident to seek for extension theorems for 
vector-valued functions by extension theorems for continuous linear operators. 
In this way many of the extension theorems of the aforementioned references are derived but in most of the cases 
the space $\F$ has to be a semi-Montel (see \cite{Gramsch1977,Gro,kruse2018_3}) or even a 
Fr\'echet--Schwartz space (see \cite{B/F/J,F/J,Gramsch1977,grosse-erdmann1992,grosse-erdmann2004,Grothendieck1953,jorda2005,kruse2018_3}) 
or $E$ is restricted to be a semi-Montel space (see \cite{B2,kruse2018_3}). 
The restriction to semi-Montel spaces $\F$ resp.\ $E$, i.e.\ to locally convex spaces in which every bounded set 
is relatively compact, is quite natural due to the topology of the dual $\F_{\kappa}'$ in the $\varepsilon$-product 
$\F\varepsilon E$ and its symmetry $\F\varepsilon E\cong E\varepsilon\F$.

In the present paper we treat the case of a Banach space which we denote by $\FV$ 
because its topology is induced by a weight $\nu$. 
We use the methods developped in \cite{F/J/W} and \cite{jorda2013} where, in particular, the special case that 
$\FV$ is the space of bounded smooth functions on an open set $\Omega\subset\R^{d}$ in the kernel of a 
hypoelliptic linear partial differential operator resp.\ a weighted space of holomorphic functions on 
an open subset $\Omega$ of a Banach space is treated. The lack of compact subsets of the infinite dimensional 
Banach space $\FV$ is compensated in \cite{F/J/W} and \cite{jorda2013} by using an auxiliary locally convex 
Hausdorff space $\F$ of scalar-valued functions on $\Omega$ 
such that the closed unit ball of $\FV$ is compact in $\F$. They share the property that 
$S\colon \FV\varepsilon E\to\FVE$ and $S\colon \F\varepsilon E\to\FE$ are topological isomorphisms into 
but usually it is only known in the latter case that $S$ is surjective as well 
under some mild completeness assumption on $E$.
For instance, if $\FVE:=H^{\infty}(\Omega,E)$ is the space of bounded holomorphic 
functions on an open set $\Omega\subset\C$ with values in a locally complete space $E$, then the space 
$\FE:=(\mathcal{O}(\Omega,E),\tau_{co})$ is used in \cite{F/J/W}.
 
Let us outline the content of our paper. We give a general approach to the extension problem for Banach function 
spaces $\FV$. It combines the methods of \cite{F/J/W,jorda2013} with the ones of \cite{kruse2017} 
as in \cite{kruse2018_3} which require that the spaces $\FV$ and $\FVE$ have a certain structure 
(see \prettyref{def:standard_space}).
To answer \prettyref{que:weak_strong} we have to balance the sets $\Lambda\subset\Omega$ and the spaces $G\subset E'$. 
If we choose $\Lambda$ to be `thin', then $G$ has to be `thick' (see Section 3 and 5) and vice versa (see Section 4). 
In Section 6 we use the results of Section 3 to derive and improve weak-strong principles for differentiable functions 
of finite order. Section 7 is devoted to vector-valued Blaschke theorems and Section 8 to Wolff type descriptions 
of the dual of $\F$.
\section{Notation and Preliminaries}
We use essentially the same notation and preliminaries as in \cite[Section 2]{kruse2018_3}.
We equip the spaces $\R^{d}$, $d\in\N$, and $\C$ with the usual Euclidean norm $|\cdot|$.
By $E$ we always denote a non-trivial locally convex Hausdorff space over the field 
$\K=\R$ or $\C$ equipped with a directed fundamental system of 
seminorms $(p_{\alpha})_{\alpha\in \mathfrak{A}}$ and, in short, we write $E$ is an lcHs. 
If $E=\K$, then we set $(p_{\alpha})_{\alpha\in \mathfrak{A}}:=\{|\cdot|\}.$ 
For more details on the theory of locally convex spaces see \cite{F/W/Buch,Jarchow,meisevogt1997}.

By $X^{\Omega}$ we denote the set of maps from a non-empty set $\Omega$ to a non-empty set $X$
and by $L(F,E)$ the space of continuous linear operators from $F$ to $E$ 
where $F$ and $E$ are locally convex Hausdorff spaces. 
If $E=\K$, we just write $F':=L(F,\K)$ for the dual space and $G^{\circ}$ for the polar set of $G\subset F$. 
We write $F\cong E$ if $F$ and $E$ are (linearly topologically) isomorphic. 
We denote by $L_{t}(F,E)$ the space $L(F,E)$ equipped with the locally convex topology $t$ of uniform convergence 
on the finite subsets of $F$ if $t=\sigma$, on the absolutely convex, compact subsets of $F$ if $t=\kappa$ 
and on the bounded subsets of $F$ if $t=b$. We use the symbol $t(F',F)$ for the corresponding topology on $F'$. 
A linear subspace $G$ of $F'$ is called separating if $f'(x)=0$ for every $f'\in G$ implies $x=0$. 
This is equivalent to $G$ being $\sigma(F',F)$-dense (and $\kappa(F',F)$-dense) in $F'$ by the bipolar theorem. 
Further, for a disk $D\subset F$, i.e.\ a bounded, absolutely convex set, 
the vector space $F_{D}:=\bigcup_{n\in\mathbb{N}}nD$ becomes a normed space if it is equipped with 
gauge functional of $D$ as a norm (see \cite[p.\ 151]{Jarchow}). The space $F$ is called locally 
complete if $F_{D}$ is a Banach space for every closed disk $D\subset F$ (see \cite[10.2.1 Proposition, p.\ 197]{Jarchow}).

Furthermore, we recall the definition of continuous partial differentiability of a vector-valued function 
that we need in many examples, especially, for the weak-strong principle for differentiable functions 
of finite order in Section 6. A function $f\colon\Omega\to E$ on an open set $\Omega\subset\R^{d}$ 
to an lcHs $E$ is called continuously partially differentiable ($f$ is $\mathcal{C}^{1}$) 
if for the $n$-th unit vector $e_{n}\in\R^{d}$ the limit
\[
(\partial^{e_{n}})^{E}f(x)
:=\lim_{\substack{h\to 0\\ h\in\R, h\neq 0}}\frac{f(x+he_{n})-f(x)}{h}
\]
exists in $E$ for every $x\in\Omega$ and $(\partial^{e_{n}})^{E}f$ is continuous on $\Omega$ 
($(\partial^{e_{n}})^{E}f$ is $\mathcal{C}^{0}$) for every $1\leq n\leq d$.
For $k\in\N$ a function $f$ is said to be $k$-times continuously partially differentiable 
($f$ is $\mathcal{C}^{k}$) if $f$ is $\mathcal{C}^{1}$ and all its first partial derivatives are $\mathcal{C}^{k-1}$.
A function $f$ is called infinitely continuously partially differentiable ($f$ is $\mathcal{C}^{\infty}$) 
if $f$ is $\mathcal{C}^{k}$ for every $k\in\N$.
For $k\in\N_{0}\cup\{\infty\}$ the linear space of all functions $f\colon\Omega\to E$ which are $\mathcal{C}^{k}$ 
is denoted by $\mathcal{C}^{k}(\Omega,E)$. 
Let $f\in\mathcal{C}^{k}(\Omega,E)$. For $\beta\in\N_{0}^{d}$ with 
$|\beta|:=\sum_{n=1}^{d}\beta_{n}\leq k$ we set 
$(\partial^{\beta_{n}})^{E}f:=f$ if $\beta_{n}=0$, and
\[
(\partial^{\beta_{n}})^{E}f
:=\underbrace{(\partial^{e_{n}})^{E}\cdots(\partial^{e_{n}})^{E}}_{\beta_{n}\text{-times}}f
\]
if $\beta_{n}\neq 0$ as well as 
\[
(\partial^{\beta})^{E}f
:=(\partial^{\beta_{1}})^{E}\cdots(\partial^{\beta_{d}})^{E}f.
\]
If $E=\K$, we usually write $\partial^{\beta}f:=(\partial^{\beta})^{\K}f$. 
We denote by $\tau_{\mathcal{C}^{k}}$ the usual topology on $\mathcal{C}^{k}(\Omega,E)$, namely, 
the locally convex topology given by the seminorms
\[
|f|_{K,m,\alpha}:=\sup_{\substack{x\in K\\ \beta\in\N_{0}^{d},|\beta|\leq m}}p_{\alpha}\bigl((\partial^{\beta})^{E}f(x)\bigr), 
\quad f\in\mathcal{C}^{k}(\Omega,E),
\]
for $K\subset\Omega$ compact, $m\in\N_{0}$, $m\leq k$, and $\alpha\in\mathfrak{A}$.

In addition, we use the following notion for the relation between the $\varepsilon$-product $\F\varepsilon E$ 
and the space $\FE$ of vector-valued functions that has already been described in the introduction. 

\begin{defn}[{$\varepsilon$-into-compatible, \cite[2.1 Definition, p.\ 4]{kruse2018_3}}]
Let $\Omega$ be a non-empty set and $E$ an lcHs. Let $\F\subset\K^{\Omega}$ and $\FE\subset E^{\Omega}$ 
be lcHs such that $\delta_{x}\in\F'$ for all $x\in\Omega$. We call the spaces $\F$ and $\FE$ $\varepsilon$\emph{-into-compatible} if the map
\[
S\colon \F\varepsilon E\to \FE,\;u\longmapsto [x\mapsto u(\delta_{x})],
\]
is a well-defined isomorphism into. We call $\F$ and $\FE$ $\varepsilon$\emph{-compatible} if $S$ is an isomorphism. 
If we want to emphasise the dependency on $\F$, we write $S_{\F}$ instead of $S$.
\end{defn}

\begin{defn}[{strong, consistent}]\label{def:cons_strong}
Let $\Omega$ and $\omega$ be non-empty sets and $E$, $\F\subset\K^{\Omega}$ and $\FE\subset E^{\Omega}$ 
be lcHs. Let $\delta_{x}\in\F'$ for all $x\in\Omega$ and $T^{\K}\colon\F\to \K^{\omega}$ 
and $T^{E}\colon\FE\to E^{\omega}$ be linear maps.
\begin{enumerate}
\item[a)] We call $(T^{E},T^{\K})$ a \emph{consistent} family for $(\mathcal{F},E)$ if 
we have for every $u\in\F\varepsilon E$ that $S(u)\in\FE$ and
\[
\forall\;x\in\omega:\;T^{\K}_{x}:=\delta_{x}\circ T^{\K}\in\F' \quad\text{and}\quad T^{E}S(u)(x)=u(T^{\K}_{x}).
\]
\item[b)] We call $(T^{E},T^{\K})$ a \emph{strong} family for $(\mathcal{F},E)$ if 
we have for every $e'\in E'$, $f\in\FE$ that $e'\circ f\in\F$ and 
\[
\forall\;x\in\omega:\;T^{\K}(e'\circ f)(x)=(e'\circ T^{E}(f))(x).
\]
\end{enumerate}
\end{defn}

This is a special case of \cite[2.2 Definition, p.\ 4]{kruse2018_3} where the considered family $(T^{E}_{m},T^{\K}_{m})_{m\in M}$ only consists 
of one pair, i.e.\ the set $M$ is a singleton. In the introduction we have already hinted that 
the spaces $\F$ and $\FE$ for which we want to prove extension theorems need to have a certain structure, namely, 
the following one. 

\begin{defn}[{generator}]\label{def:standard_space}
Let $\Omega$ and $\omega$ be non-empty sets, $\nu\colon\omega\to (0,\infty)$, $\FE$ a linear subspace of $E^{\Omega}$
and $T^{E}\colon \FE\to E^{\omega}$ a linear map. 
We define the space
\[
\FVE:=\bigl\{f\in \FE\;|\; 
 \forall\;\alpha\in \mathfrak{A}:\; |f|_{\alpha}<\infty\bigr\}
\]
where 
\[
|f|_{\alpha}:=\sup_{x \in \omega}p_{\alpha}\bigl(T^{E}(f)(x)\bigr)\nu(x).
\]
Further, we call $(T^{E},T^{\K})$ the \emph{generator} for $(\FV,E)$, 
in short $(\mathcal{F}\nu,E)$. We write $\FV:=\mathcal{F}\nu(\Omega,\K)$ and omit the index $\alpha$ 
if $E$ is a normed space. If we want to emphasise dependencies, we write $|f|_{\FV,\alpha}$ instead of $|f|_{\alpha}$.
\end{defn}

This is a special case of \cite[Definition 3, p.\ 1515]{kruse2017} where the family of weights only consists 
of one weight function. 
For instance, if $\Omega:=\omega$, $T^{E}:=\id_{E^{\Omega}}$ and $\nu:=1$ on $\Omega$, then 
$\FVE$ is the linear subspace of $\FE$ consisting of bounded functions, in particular, if $\Omega\subset\C$ is open and 
$\FE:=\mathcal{O}(\Omega,E)$, then $\FVE=H^{\infty}(\Omega,E)$ is the space of $E$-valued bounded 
holomorphic functions on $\Omega$.
Due to $(E, (p_{\alpha})_{\alpha\in\mathfrak{A}})$ being an lcHs with directed system of seminorms the 
topology of $\FVE$ generated by $(|\cdot|_{\alpha})_{\alpha\in\mathfrak{A}}$ is locally convex 
and the system $(|\cdot|_{\alpha})_{\alpha\in\mathfrak{A}}$ is directed but need not be Hausdorff. 

\begin{prop}\label{prop:mingle-mangle}
Let $\F$ and $\FE$ be $\varepsilon$-into-compatible, 
$(T^{E},T^{\K})$ a consistent family for $(\mathcal{F},E)$ and a generator for $(\mathcal{F}\nu,E)$ and
the map $i\colon\FV\to\F$, $f\mapsto f$, continuous. We set 
\[
 \Feps:=S(\{u\in\F\varepsilon E\;|\;u(B_{\mathcal{F}\nu(\Omega)}^{\circ \F'})\;\text{is bounded in}\; E\})
\]
where $B_{\FV}^{\circ \F'}:=\{y'\in\F'\;|\;\forall\;f\in B_{\FV}:\;|y'(f)|\leq 1\}$ 
and $B_{\FV}$ is the closed unit ball of $\FV$.
Then the following holds. 
\begin{enumerate}
\item[a)] $\FV$ is Hausdorff and $\delta_{x}\in\FV'$ for all $x\in\Omega$.
\item[b)] Let $u\in\F\varepsilon E$. Then 
\[
\sup_{y'\in B_{\FV}^{\circ \F'}}p_{\alpha}(u(y'))=|S(u)|_{\FV,\alpha},\quad\alpha\in\mathfrak{A}.
\]
In particular, 
\[
 \Feps=S(\{u\in\F\varepsilon E\;|\;\forall\;\alpha\in\mathfrak{A}:\;|S(u)|_{\FV,\alpha}<\infty\}).
\]
\item[c)] $S(\FV\varepsilon E)\subset\Feps\subset\FVE$ as linear spaces. If $\F$ and $\FE$ are even 
$\varepsilon$-compatible, then $\Feps=\FVE$.
\item[d)] If $\FVE$ is Hausdorff and $(T^{E},T^{\K})$ is a strong family for $(\mathcal{F},E)$, then 
\begin{enumerate}
\item[(i)] $(T^{E},T^{\K})$ is a strong, consistent generator for $(\mathcal{F}\nu,E)$ and
\item[(ii)] $\FV$ and $\FVE$ are $\varepsilon$-into-compatible.
\end{enumerate}
\end{enumerate}
\end{prop}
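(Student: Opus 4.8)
Throughout I abbreviate $D:=\{\nu(x)\,T^{\K}_{x}\mid x\in\omega\}\subset\F'$, where $T^{\K}_{x}=\delta_{x}\circ T^{\K}$ as in the consistency hypothesis. For (a), since $i\colon\FV\to\F$ is continuous and $\F$ is Hausdorff, every continuous seminorm $q$ on $\F$ satisfies $q\leq C_{q}|\cdot|_{\FV}$; hence $|f|_{\FV}=0$ forces $q(f)=0$ for all such $q$, so $f=0$ and $\FV$ is Hausdorff, while $\delta_{x}=\delta_{x}\circ i\in\FV'$ by continuity of $i$. Part (b) is the technical core. Consistency and $\nu(x)>0$ give $|S(u)|_{\FV,\alpha}=\sup_{x}p_{\alpha}(T^{E}S(u)(x))\nu(x)=\sup_{y'\in D}p_{\alpha}(u(y'))$, so it remains to replace $\sup_{D}$ by $\sup_{B_{\FV}^{\circ\F'}}$. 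Taking polars in the pairing $\langle\F',\F\rangle$ one checks $D^{\circ}=B_{\FV}$, whence the bipolar theorem yields $B_{\FV}^{\circ\F'}=D^{\circ\circ}=\oacx(D)$ (closure for $\sigma(\F',\F)$). Because $u\in L(\F'_{\kappa},E)$ and $(\F'_{\kappa})'=\F$, we have $e'\circ u\in\F$ for $e'\in E'$ and $p_{\alpha}(u(y'))=\sup_{e'\in U_{\alpha}^{\circ}}|\langle y',e'\circ u\rangle|$ with $U_{\alpha}=\{p_{\alpha}\leq1\}$; thus $p_{\alpha}\circ u$ is a supremum of $\sigma(\F',\F)$-continuous functions, hence $\sigma(\F',\F)$-lower semicontinuous, and it is a seminorm. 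A seminorm is unchanged on the absolutely convex hull, and a lower semicontinuous function is bounded on a set by its supremum over any dense subset; combining these gives $\sup_{B_{\FV}^{\circ\F'}}p_{\alpha}\circ u=\sup_{D}p_{\alpha}\circ u$, which is (b), and the ``in particular'' is immediate since $u(B_{\FV}^{\circ\F'})$ is bounded iff all these suprema are finite.

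For (c), the inclusion $\Feps\subset\FVE$ holds because $S(u)\in\FE$ by $\varepsilon$-into-compatibility, and (b) turns boundedness of $u(B_{\FV}^{\circ\F'})$ into $|S(u)|_{\FV,\alpha}<\infty$ for all $\alpha$, i.e.\ $S(u)\in\FVE$. For $S(\FV\varepsilon E)\subset\Feps$ I use the transpose $i'\colon\F'_{\kappa}\to(\FV)'_{\kappa}$, $y'\mapsto y'\circ i$, which is continuous since $i$ maps absolutely convex compact sets to absolutely convex compact sets. Given $u\in\FV\varepsilon E=L((\FV)'_{\kappa},E)$, set $v:=u\circ i'\in\F\varepsilon E$; then $S(v)(x)=u(\delta_{x}\circ i)=u(\delta_{x})=S_{\FV}(u)(x)$, so $S(v)=S_{\FV}(u)$. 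Moreover $i'(B_{\FV}^{\circ\F'})$ lies in the closed unit ball $B_{\FV}^{\circ}$ of $(\FV)'$, which is equicontinuous, and $u$ maps equicontinuous sets to bounded sets (on equicontinuous sets $\kappa$ and $\sigma(\F',\F)$ coincide, so they are $\kappa$-relatively compact); hence $v(B_{\FV}^{\circ\F'})$ is bounded and $S_{\FV}(u)\in\Feps$. If $\F,\FE$ are $\varepsilon$-compatible, then each $f\in\FVE\subset\FE$ equals $S(u)$, and $|S(u)|_{\FV,\alpha}=|f|_{\alpha}<\infty$ gives (via (b)) $u(B_{\FV}^{\circ\F'})$ bounded, so $f\in\Feps$ and $\Feps=\FVE$.

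For (d) assume $\FVE$ Hausdorff. The identity of (b), read now in the pairing $\langle(\FV)',\FV\rangle$, gives $|S_{\FV}(u)|_{\alpha}=\sup_{y'\in B_{\FV}^{\circ}}p_{\alpha}(u(y'))$; since $\FV$ is normed its equicontinuous sets are the multiples of $B_{\FV}^{\circ}$, so these are exactly the seminorms generating $\FV\varepsilon E$. Hence $S_{\FV}$ identifies the two seminorm systems, is injective, and—using Hausdorffness of $\FVE$—is an isomorphism into, proving (ii). The consistency part of (i) follows from $S_{\FV}(u)=S(v)$ with $v=u\circ i'$: one obtains $T^{E}S_{\FV}(u)(x)=v(T^{\K}_{x})=u(T^{\K}_{x}\circ i)=u((T^{\K}_{x})^{\FV})$, where $(T^{\K}_{x})^{\FV}=\delta_{x}\circ T^{\K}\in(\FV)'$.

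It remains to prove strongness for $(\F\nu,E)$. I would first note that consistency already yields the strong relation on the range $S(\F\varepsilon E)$: for $f=S(u)$ one has $e'\circ f=e'\circ u\in(\F'_{\kappa})'=\F$, and applying $e'$ to the consistency identity gives $T^{\K}(e'\circ f)(x)=\langle T^{\K}_{x},e'\circ u\rangle=e'(u(T^{\K}_{x}))=e'(T^{E}f(x))$. Once $e'\circ f\in\F$ with $T^{\K}(e'\circ f)=e'\circ T^{E}f$ is known for $f\in\FVE$, membership $e'\circ f\in\FV$ is cheap: picking $\alpha$ with $|e'|\leq Cp_{\alpha}$ yields $|e'\circ f|_{\FV}=\sup_{x}\nu(x)|e'(T^{E}f(x))|\leq C|f|_{\alpha}<\infty$. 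The main obstacle is precisely transferring the strong relation from $S(\F\varepsilon E)$ to all of $\FVE$, since consistency constrains only functions of the form $S(u)$ and, by (c), $\FVE\supset\Feps=S_{\FV}(\FV\varepsilon E)$ may be strictly larger unless $\F,\FE$ are $\varepsilon$-compatible. I would close this gap either by invoking $\varepsilon$-compatibility (so $\FVE=\Feps\subset S(\F\varepsilon E)$ and the computation above applies verbatim) or by using that the generator $(T^{E},T^{\K})$ is strong for $(\F,E)$, as is the case for the concrete differential generators of Section~6, which directly gives $e'\circ f\in\F$ together with the commutation for every $f\in\FE\supset\FVE$.
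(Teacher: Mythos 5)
Your parts a)--c), and the consistency and (ii) portions of d), are correct and follow essentially the same route as the paper. In b) the paper simply cites \cite[Lemma 7, p.\ 1517]{kruse2017} for the bipolar identity $B_{\FV}^{\circ\F'}=\oacx(D)$ and the chain $\sup_{B_{\FV}^{\circ\F'}}=\sup_{\acx(D)}=\sup_{D}$; your lower-semicontinuity argument for $p_{\alpha}\circ u$ (via $(\F_{\kappa}')'=\F$) is precisely the detail that citation hides. In c) the paper bounds $u$ on $B_{\FV}^{\circ\F'}$ directly by $p_{\alpha}(u(y'))\leq C_{0}\sup_{f\in K}|y'(f)|\leq C_{0}C_{1}$ with an absolutely convex compact $K\subset C_{1}B_{\FV}$, which is your equicontinuity argument in different clothing, and in d) the paper likewise factors through $u_{\mid\F'}=u\circ i'$ and cites the first part of the proof of \cite[Theorem 14, p.\ 1524]{kruse2017} for (ii).

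Your diagnosis of the strongness claim in d)(i) is exactly right, and it points at a genuine defect in the statement as printed: the paper's own proof reads ``the inclusion $\FVE\subset\FE$ and $(T^{E},T^{\K})$ being a strong generator for $(\mathcal{F},E)$ imply that $T^{\K}(e'\circ f)(x)=e'\circ T^{E}(f)(x)$'', i.e.\ it invokes strongness for $(\mathcal{F},E)$, which is not among the proposition's hypotheses (only consistency is assumed). Since strongness quantifies over all $f\in\FE$ while consistency only constrains the range of $S$, the relation indeed cannot propagate to all of $\FVE$ without this extra assumption, and your first fix --- adding strongness for $(\mathcal{F},E)$, which is in fact assumed in every application the paper makes (\prettyref{rem:R_well-defined}, \prettyref{thm:ext_B_unique}, \prettyref{thm:ext_B_fix_top}) --- is evidently the intended reading; your $\varepsilon$-compatibility alternative also works, via $\FVE=\Feps$. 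One small slip: you write $\Feps=S_{\FV}(\FV\varepsilon E)$, but part c) only yields the inclusion $S(\FV\varepsilon E)\subset\Feps$; the equality is neither claimed nor needed for your argument.
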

\begin{proof}
Part a) follows from the continuity of $i$ and the $\varepsilon$-into-compatibility of $\F$ and $\FE$. 
Let us turn to part b). As in \cite[Lemma 7, p.\ 1517]{kruse2017} it follows from the bipolar theorem that 
\[
 B_{\FV}^{\circ \F'}=\oacx\{T^{\K}_{x}(\cdot)\nu(x)\;|\;x\in\omega\},
\]
where $\oacx$ denotes the closure w.r.t.\ $\kappa(\F',\FV)$ of the absolutely convex hull $\acx$ of the set 
$D:=\{T^{\K}_{x}(\cdot)\nu(x)\;|\;x\in\omega\}$ on the right-hand side, 
and that 
\begin{align*}
  \sup_{y'\in B_{\FV}^{\circ \F'}}p_{\alpha}(u(y'))
&=\sup_{y'\in\acx(D)}p_{\alpha}(u(y')) 
 =\sup_{y'\in D}p_{\alpha}(u(y'))=\sup_{x\in\omega}p_{\alpha}\bigl(u(T^{\K}_{x})\bigr)\nu(x)\\
&=\sup_{x\in\omega}p_{\alpha}\bigl(T^{E}(S(u))(x)\bigr)\nu(x)
 =|S(u)|_{\FV,\alpha}
\end{align*}
by consistency, which proves part b).

Let us address part c). The continuity of $i$ implies 
the continuity of the inclusion $\FV\varepsilon E\hookrightarrow\F\varepsilon E$ and 
thus we obtain $u_{\mid \F'}\in \F\varepsilon E$ for every $u\in\FV\varepsilon E$.
If $u\in\FV\varepsilon E$ and $\alpha\in\mathfrak{A}$, 
then there are $C_{0},C_{1}>0$ and an absolutely convex compact set $K\subset\FV$ 
such that $K\subset C_{1}B_{\FV}$ and 
\[
 \sup_{y'\in B_{\FV}^{\circ \F'}}p_{\alpha}(u(y'))
 \leq C_{0}\sup_{y'\in B_{\FV}^{\circ \F'}}\sup_{f\in K}|y'(f)|
 \leq C_{0}C_{1},
\]
which implies $S(\FV\varepsilon E)\subset\Feps$. 
If $f:=S(u)\in\Feps$ and $\alpha\in\mathfrak{A}$, then $S(u)\in\FE$ and
\[
 |f|_{\FV,\alpha}=\sup_{x\in\omega}p_{\alpha}(u(T^{\K}_{x})\nu(x))<\infty
\]
by consistency, yielding $\Feps\subset\FVE$. If $\F$ and $\FE$ are even 
$\varepsilon$-compatible, then $S(\F\varepsilon E)=\FE$, which yields $\Feps=\FVE$ by part b).

Let us turn to part d). We have $u_{\mid \F'}\in \F\varepsilon E$ for every $u\in\FV\varepsilon E$ and 
\[
S_{\FV}(u)(x)=u(\delta_{x})=u_{\mid \F'}(\delta_{x})
=S_{\F}(u_{\mid \F'})(x),\quad x\in\Omega.
\]
In combination with $S(\F\varepsilon E)\subset\FE$ and the consistency of $(T^{E},T^{\K})$ for $(\mathcal{F},E)$ 
this yields that $(T^{E},T^{\K})$ is a consistent generator for $(\mathcal{F}\nu,E)$ 
by \cite[Lemma 7, p.\ 1517]{kruse2017} as $\FV$ is a $\operatorname{dom}$-space in the sense of 
\cite[Definition 4, p.\ 1515]{kruse2017} by part a).
The inclusion $\FVE\subset\FE$ and $(T^{E},T^{\K})$ being a strong family for $(\mathcal{F},E)$ imply that
$T^{\K}(e'\circ f)(x)=e'\circ T^{E}(f)(x)$ for all $e'\in E'$, $f\in\FVE$ and $x\in\omega$. 
It follows that $e'\circ f\in\FV$ for all $e'\in E'$ and so that
$(T^{E},T^{\K})$ is a strong generator for $(\mathcal{F}\nu,E)$. 
Thus part (i) holds and implies part (ii) by the first part of the proof of \cite[Theorem 14, p.\ 1524]{kruse2017}. 
\end{proof}

The canonical situation in part c) is that $\Feps$ and $\FVE$ coincide as linear spaces for 
locally complete $E$ as we will encounter in the forthcoming examples, 
e.g.\ if $\FVE:=H^{\infty}(\Omega,E)$ and $\FE:=(\mathcal{O}(\Omega,E),\tau_{co})$ for an open set $\Omega\subset\C$. 
That all three spaces in part c) coincide is usually only guaranteed by \cite[Theorem 14 (ii), p.\ 1524]{kruse2017} 
if $E$ is a semi-Montel space. Therefore the `mingle-mangle' space $\Feps$ is a good replacement for 
$S(\FV\varepsilon E)$ for our purpose. 
\section{Extension of vector-valued functions}
In this section the sets from which we want to extend our functions are `thin'. They are so-called sets of uniqueness. 

\begin{defn}[{set of uniqueness}]
Let $\FV$ be Hausdorff. A set $U\subset\omega$ is called 
a \emph{set of uniqeness} for $(T^{\K},\mathcal{F}\nu)$ if
\[
\forall\; f\in\FV: \;T^{\K}(f)(x)=0 \;\;\forall\,x\in U\;\;\Rightarrow\;\; f=0.
\]
\end{defn}

This definition is a special case of \cite[3.1 Definition, p.\ 8]{kruse2018_3} because
$T^{\K}_{x}\in \FV'$ for all $x\in\omega$ by \cite[Remark 5, p.\ 1516]{kruse2017}.
The span of $\{T^{\K}_{x}\;|\;x\in U\}$ is
weak$^{\ast}$-dense in $\FV'$ by the bipolar theorem if $U$ is a set of uniqueness for $(T^{\K},\mathcal{F}\nu)$. 
The set $U:=\omega$ is always a set of uniqueness for $(T^{\K},\mathcal{F}\nu)$ as $\FV$ is an lcHs 
by assumption. Next, we introduce the notion of a restriction space which is a special case of \cite[3.3 Definition, p.\ 8]{kruse2018_3}.

\begin{defn}[{restriction space}]
Let $G\subset E'$ be a separating subspace and $U$ a set of uniqueness for $(T^{\K},\mathcal{F}\nu)$.
Let $\mathcal{F}\nu_{G}(U,E)$ be the space of functions $f\colon U\to E$ such that for every $e'\in G$ there is 
$f_{e'}\in\FV$ with $T^{\K}(f_{e'})(x)=(e'\circ f)(x)$ for all $x\in U$.
\end{defn}

The time has come to use our auxiliary spaces $\F$, $\FE$ 
and $\Feps$ from \prettyref{prop:mingle-mangle}. 

\begin{rem}\label{rem:R_well-defined}
Let $(T^{E},T^{\K})$ be a strong, consistent family for $(\mathcal{F},E)$ and a generator for $(\mathcal{F}\nu,E)$.
Let $\F$ and $\FE$ be $\varepsilon$-into-compatible and the inclusion $\FV\hookrightarrow\F$ continuous. 
Consider a set of uniqueness $U$ for $(T^{\K},\mathcal{F}\nu)$ and a separating subspace $G\subset E'$.
For $u\in \F\varepsilon E$ such that $u(B_{\FV}^{\circ \F'})$ is bounded in $E$, 
i.e.\ $S(u)\in \Feps$, we set $f:=S(u)$. 
Then $f\in\FE$ by the $\varepsilon$-into-compatibility 
and we define $\widetilde{f}\colon U\to E$, $\widetilde{f}(x):=T^{E}(f)(x)$. This yields
\begin{equation}\label{eq:injective}
(e'\circ \widetilde{f})(x)= (e'\circ T^{E}(f))(x)=T^{\K}(e'\circ f)(x)
\end{equation}
for all $x\in U$ and $f_{e'}:=e'\circ f\in\F$ for each $e'\in E'$ by the strength of the family. 
Moreover, $T^{\K}_{x}(\cdot)\nu(x)\in B_{\FV}^{\circ \F'}$ for every $x\in\omega$, 
which implies that for every $e'\in E'$ there are $\alpha\in\mathfrak{A}$ and $C>0$ such that
\[
|f_{e'}|_{\FV}=\sup_{x\in\omega}\bigl|e'\bigl(u(T^{\K}_{x}(\cdot)\nu(x)\bigr)\bigr|
\leq C\sup_{y'\in B_{\FV}^{\circ \F'}}p_{\alpha}(u(y'))<\infty
\]
by strength and consistency. Hence $f_{e'}\in\FV$ for every $e'\in E'$ and $\widetilde{f}\in\mathcal{F}\nu_{G}(U,E)$.
\end{rem}

Under the assumptions of \prettyref{rem:R_well-defined} the map
\begin{equation}\label{eq:well_def_B_unique}
R_{U,G}\colon \Feps\to \mathcal{F}\nu_{G}(U,E),\;f\mapsto (T^{E}(f)(x))_{x\in U}, 
\end{equation}
is well-defined and linear. In addition, we derive from \eqref{eq:injective} that $R_{U,G}$ is injective since 
$U$ is a set of uniqueness and $G\subset E'$ separating. 

\begin{que}\label{que:surj_restr_set_unique}
Let the assumptions of \prettyref{rem:R_well-defined} be fulfilled.
When is the injective restriction map 
\[
R_{U,G}\colon \Feps\to \mathcal{F}\nu_{G}(U,E),\;f\mapsto (T^{E}(f)(x))_{x\in U},
\]
surjective?
\end{que}

Due to \prettyref{prop:mingle-mangle} c) the \prettyref{que:weak_strong} is a special case of this question 
if $\Lambda\subset\Omega=:\omega$ and $U:=\Lambda$ is a set of uniqueness for $(\id_{\K^{\Omega}},\mathcal{F}\nu)$.
To answer \prettyref{que:surj_restr_set_unique} for general sets of uniqueness we have to restrict to a certain class
of 'thick` separating subspaces of $E'$.

\begin{defn}[{determine  boundedness \cite[p.\ 230]{B/F/J}}]
A linear subspace $G\subset E'$ \emph{determines boundedness} if every $\sigma(E,G)$-bounded set $B\subset E$ is 
already bounded in $E$.
\end{defn}

$E'$ itself always determines boundedness by Mackey's theorem. Further examples can be found in 
\cite[3.10 Remark, p.\ 10]{kruse2018_3} and the references therein. 
We recall the following extension result for continuous linear operators. 

\begin{prop}[{\cite[Proposition 2.1, p.\ 691]{F/J/W}}]\label{prop:ext_B_set_uni}
Let $E$ be locally complete, $G\subset E'$ determine boundedness, $Z$ a Banach space whose closed unit ball $B_{Z}$ is a 
compact subset of an lcHs $Y$ and $X\subset Y'$ be a $\sigma(Y',Z)$-dense subspace. 
If $\mathsf{A}\colon X\to E$ is a $\sigma(X,Z)$-$\sigma(E,G)$-continuous linear map, 
then there exists a (unique) extension $\widehat{\mathsf{A}}\in Y\varepsilon E$ of $\mathsf{A}$ 
such that $\widehat{\mathsf{A}}(B_{Z}^{\circ Y'})$ is bounded in $E$ where 
$B_{Z}^{\circ Y'}:=\{y'\in Y'\;|\;\forall\;z\in B_{Z}:\; |y'(z)|\leq 1\}$.
\end{prop}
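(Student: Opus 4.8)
The plan is to realise $\widehat{\mathsf A}$ as a suitable transpose of $\mathsf A$. First I would exploit the continuity hypothesis: for every $e'\in G$ the functional $e'\circ\mathsf A\colon X\to\K$ is $\sigma(X,Z)$-continuous, and since the pairing $\langle X,Z\rangle$ coming from $Z\subset Y$ and $X\subset Y'$ identifies $Z$ with the dual of $(X,\sigma(X,Z))$ (here $X$ separates $Z$ because it is $\sigma(Y',Z)$-dense in the point-separating space $Y'$), there is a unique $z_{e'}\in Z$ with $(e'\circ\mathsf A)(x)=\langle x,z_{e'}\rangle$ for all $x\in X$. This yields the transpose $\Phi\colon G\to Z$, $e'\mapsto z_{e'}$, and motivates the candidate
\[
w_{y'}\colon G\to\K,\quad e'\mapsto\langle y',z_{e'}\rangle,\qquad y'\in Y',
\]
a priori an element of the algebraic dual $G^{*}$. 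For $y'=x\in X$ one has $w_{x}(e')=\langle x,z_{e'}\rangle=e'(\mathsf A x)$ for all $e'\in G$, so, $G$ being separating, $w_{x}$ is evaluation at $\mathsf A x\in E$; hence $y'\mapsto w_{y'}$ extends $\mathsf A$ as soon as I know it takes values in $E\subset G^{*}$.

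Next I would set up the normalising objects. Because $Z$ is a Banach space whose unit ball $B_{Z}$ is an absolutely convex compact subset of $Y$, its polar $B_{Z}^{\circ Y'}$ is a $\kappa(Y',Y)$-neighbourhood of $0$ in $Y'$. Writing $D:=X\cap B_{Z}^{\circ Y'}$, for each $e'\in G$ we get $\sup_{\xi\in\mathsf A(D)}|e'(\xi)|=\sup_{x\in D}|\langle x,z_{e'}\rangle|\le\|z_{e'}\|_{Z}<\infty$, so $\mathsf A(D)$ is $\sigma(E,G)$-bounded; as $G$ determines boundedness, $\mathsf A(D)$ is bounded in $E$. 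Moreover $|w_{y'}(e')|\le\|z_{e'}\|_{Z}$ for $y'\in B_{Z}^{\circ Y'}$, so such $w_{y'}$ lies in the $\sigma(G^{*},G)$-closed absolutely convex hull of $\mathsf A(D)$.

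The heart of the argument, and the step I expect to be the \emph{main obstacle}, is to show that this closure does not leave $E$, i.e.\ that $w_{y'}\in E$ for every $y'$. All three hypotheses must act together here: the weak-star closed absolutely convex hull of a merely bounded set can leave $E$ (as the pair $c_{0}\subset\ell^{\infty}$ shows), so boundedness and local completeness alone do not suffice; what rescues the argument is that $B_{Z}$ is \emph{compact} in $Y$, equivalently that the inclusion $Z\hookrightarrow Y$ is compact. Concretely, the map $\Psi\colon(B_{Z'},\sigma(Z',Z))\to(G^{*},\sigma(G^{*},G))$, $z'\mapsto[e'\mapsto\langle z',z_{e'}\rangle]$, is continuous on the $\sigma(Z',Z)$-compact ball $B_{Z'}$ (Banach--Alaoglu), and $w_{y'}=\Psi(y'|_{Z})$. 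I would use that $B_{Z}^{\circ Y'}$ is norming for $Z$ (bipolar theorem in $\langle Y,Y'\rangle$) together with the $\sigma(Y',Z)$-density of $X$ to approximate each $y'\in B_{Z}^{\circ Y'}$ by elements of $D$; the resulting net in $\mathsf A(D)$ is $\sigma(E,G)$-convergent, and compactness of $Z\hookrightarrow Y$ forces $\mathsf A(D)$ to be relatively compact in $E$, so that local completeness pins the limit inside the Banach disk generated by $\mathsf A(D)$ and keeps it in $E$. This gives $\Psi(B_{Z'})\subset E$ and hence $w_{y'}\in E$ for all $y'$.

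Finally I would set $\widehat{\mathsf A}(y'):=w_{y'}$ and collect the consequences. The map $\widehat{\mathsf A}\colon Y'\to E$ is linear and extends $\mathsf A$ by the first paragraph, and by construction $\widehat{\mathsf A}(B_{Z}^{\circ Y'})\subset\Psi(B_{Z'})$ is bounded in $E$. Since $B_{Z}^{\circ Y'}$ is a $\kappa(Y',Y)$-neighbourhood of $0$ whose image is bounded, rescaling separately for each seminorm $p_{\alpha}$ yields continuity of $\widehat{\mathsf A}\colon Y'_{\kappa}\to E$, that is, $\widehat{\mathsf A}\in Y\varepsilon E$ with the required boundedness property. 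Uniqueness then follows because two such extensions agree on the $\sigma(Y',Z)$-dense subspace $X$, and, $G$ being separating, their common value on each $y'\in B_{Z}^{\circ Y'}$ is pinned down as the $\sigma(E,G)$-limit coming from $D$, whence they coincide on $Y'=\bigcup_{n}nB_{Z}^{\circ Y'}$.
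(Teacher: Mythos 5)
Your skeleton --- the transpose $e'\mapsto z_{e'}$, the candidate $w_{y'}(e')=y'(z_{e'})$, boundedness of $\mathsf{A}(D)$ via the determine-boundedness hypothesis, continuity of $\widehat{\mathsf{A}}$ from boundedness on the $\kappa(Y',Y)$-zero-neighbourhood $B_{Z}^{\circ Y'}$, and uniqueness via $Y'=\bigcup_{n}nB_{Z}^{\circ Y'}$ --- is the right one; note the paper does not prove this proposition itself but imports it from \cite{F/J/W}, whose proof has the same skeleton. However, the step you yourself single out as the heart fails twice as written. First, you claim each $y'\in B_{Z}^{\circ Y'}$ can be approximated in $\sigma(Y',Z)$ by elements of $D=X\cap B_{Z}^{\circ Y'}$, citing only the norming property of $B_{Z}^{\circ Y'}$ and the $\sigma(Y',Z)$-density of $X$. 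That is a Goldstine-type assertion, equivalent by the bipolar theorem in $\langle Z',Z\rangle$ to $D$ being norming for $Z$, and weak-star density of a subspace does \emph{not} imply it is norming: total non-norming subspaces of duals of Banach spaces exist. (The same unproved norming property is silently used in your second paragraph, where $|w_{y'}(e')|\leq\|z_{e'}\|_{Z}$ is said to place $w_{y'}$ in the $\sigma(G^{*},G)$-closed absolutely convex hull of $\mathsf{A}(D)$.) Second, the claim that compactness of $Z\hookrightarrow Y$ "forces $\mathsf{A}(D)$ to be relatively compact in $E$" is false: the compactness sits on the domain side and does not pass through the merely $\sigma(X,Z)$-$\sigma(E,G)$-continuous map $\mathsf{A}$; in the intended applications $E$ is an arbitrary locally complete (e.g.\ Banach) space and $\mathsf{A}(D)$ is only bounded --- avoiding semi-Montel-type assumptions on $E$ is the entire point of the proposition. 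Without relative compactness, a bounded net whose images under all $e'\in G$ converge has no reason to have its $\sigma(G^{*},G)$-limit in $E$: your own $c_{0}\subset\ell^{\infty}$ example defeats exactly this configuration, and plain local completeness cannot "pin" a weak limit.

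What actually closes the gap is an upgrade of the density, and this is where the compactness of $B_{Z}$ must be spent. Every $\mu\in C(B_{Z})'$ (Radon measure on the compact absolutely convex set $B_{Z}\subset Y$) has a barycenter $r(\mu)$ lying in a multiple of $B_{Z}$, hence in $Z$; if $\mu$ annihilates $X|_{B_{Z}}$, then $x(r(\mu))=0$ for all $x\in X$, so $r(\mu)=0$ since $X$ separates $Z$, whence $\int y'\,\mathrm{d}\mu=y'(r(\mu))=0$ for \emph{every} $y'\in Y'$. Hahn--Banach then yields that the restriction of each $y'\in Y'$ to $Z$ is a $\|\cdot\|_{Z'}$-limit of restrictions of elements of $X$; i.e.\ one can choose $x_{n}\in X$ with $x_{n}-y'\in\varepsilon_{n}B_{Z}^{\circ Y'}$ and $\varepsilon_{n}\to 0$ (in particular $x_{n}\in(1+\varepsilon_{n})B_{Z}^{\circ Y'}$ when $y'\in B_{Z}^{\circ Y'}$, which gives your bounded approximation and the norming of $D$ a posteriori). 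Consequently $\mathsf{A}x_{n}-\mathsf{A}x_{m}\in\varepsilon_{n,m}\,\overline{\operatorname{acx}}(\mathsf{A}(D))=:\varepsilon_{n,m}B$ with $\varepsilon_{n,m}\to0$, so $(\mathsf{A}x_{n})$ is Cauchy in the Banach space $E_{B}$ --- it is local completeness in this Mackey--Cauchy form, not any weak-limit or compactness argument, that produces $\widehat{\mathsf{A}}(y'):=\lim_{n}\mathsf{A}x_{n}\in E$ with $e'(\widehat{\mathsf{A}}(y'))=y'(z_{e'})$ for all $e'\in G$. With this repair, the remainder of your final paragraph (boundedness of $\widehat{\mathsf{A}}(B_{Z}^{\circ Y'})$, continuity into $Y\varepsilon E$, and uniqueness) goes through as you wrote it.
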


Now, we are able to generalise \cite[Theorem 2.2, p.\ 691]{F/J/W} and \cite[Theorem 10, p.\ 5]{jorda2013}.

\begin{thm}\label{thm:ext_B_unique}
Let $E$ be locally complete, $G\subset E'$ determine boundedness 
and $\F$ and $\FE$ be $\varepsilon$-into-compatible. 
Let $(T^{E},T^{\K})$ be a generator for $(\mathcal{F}\nu,E)$ 
and a strong, consistent family for $(\mathcal{F},E)$, $\FV$ a Banach space whose closed unit ball $B_{\FV}$ 
is a compact subset of $\F$ and $U$ a set of uniqueness for $(T^{\K},\mathcal{F}\nu)$.
Then the restriction map 
\[
 R_{U,G}\colon \Feps \to \mathcal{F}\nu_{G}(U,E) 
\]
is surjective.
\end{thm}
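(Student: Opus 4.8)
The plan is to deduce the surjectivity from the extension result for operators in \prettyref{prop:ext_B_set_uni}. I would apply that proposition with the Banach space $Z:=\FV$, the auxiliary space $Y:=\F$ (so that $B_{\FV}=B_{Z}$ is a compact subset of $\F=Y$ by hypothesis), and the subspace $X:=\operatorname{span}\{T^{\K}_{x}\mid x\in U\}\subset\F'$, which is legitimate since $T^{\K}_{x}=\delta_{x}\circ T^{\K}\in\F'$ by consistency. The first thing to record is that $X$ is $\sigma(\F',\FV)$-dense: if $f\in\FV$ satisfies $x'(f)=0$ for all $x'\in X$, then $T^{\K}(f)(x)=T^{\K}_{x}(f)=0$ for every $x\in U$, whence $f=0$ because $U$ is a set of uniqueness; this is exactly the $\sigma(Y',Z)$-density demanded in \prettyref{prop:ext_B_set_uni}. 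Note also that $B_{Z}^{\circ Y'}=B_{\FV}^{\circ\F'}$.

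Given $g\in\mathcal{F}\nu_{G}(U,E)$, the core of the argument is to build an operator $\mathsf{A}\colon X\to E$ by setting $\mathsf{A}(T^{\K}_{x}):=g(x)$ for $x\in U$ and extending linearly. The point that needs care is well-definedness, since the $T^{\K}_{x}$ may be linearly dependent. To check it, suppose $\sum_{i}\lambda_{i}T^{\K}_{x_{i}}=0$ in $\F'$. For each $e'\in G$ pick $f_{e'}\in\FV$ with $T^{\K}(f_{e'})(x)=(e'\circ g)(x)$ on $U$, furnished by the definition of the restriction space. Then $e'\bigl(\sum_{i}\lambda_{i}g(x_{i})\bigr)=\sum_{i}\lambda_{i}T^{\K}_{x_{i}}(f_{e'})=0$, and since $G$ is separating this forces $\sum_{i}\lambda_{i}g(x_{i})=0$. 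Hence $\mathsf{A}$ is a well-defined linear map.

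Next I would verify that $\mathsf{A}$ is $\sigma(X,\FV)$-$\sigma(E,G)$ continuous. For this it suffices to show that $e'\circ\mathsf{A}$ is $\sigma(X,\FV)$-continuous for each $e'\in G$, and indeed the computation above shows $e'(\mathsf{A}(x'))=x'(f_{e'})$ for all $x'\in X$, i.e.\ $e'\circ\mathsf{A}$ is evaluation against the fixed element $f_{e'}\in\FV$, which is $\sigma(X,\FV)$-continuous by the very definition of the weak topology. Then \prettyref{prop:ext_B_set_uni}, applicable since $E$ is locally complete and $G$ determines boundedness, yields a unique $\widehat{\mathsf{A}}\in\F\varepsilon E$ extending $\mathsf{A}$ with $\widehat{\mathsf{A}}(B_{\FV}^{\circ\F'})$ bounded in $E$.

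Finally I would set $f:=S(\widehat{\mathsf{A}})$. Boundedness of $\widehat{\mathsf{A}}(B_{\FV}^{\circ\F'})$ places $f$ in $\Feps$ by definition, and consistency of $(T^{E},T^{\K})$ for $(\mathcal{F},E)$ gives $T^{E}(f)(x)=\widehat{\mathsf{A}}(T^{\K}_{x})$ for all $x\in\omega$. For $x\in U$ we have $T^{\K}_{x}\in X$ and $\widehat{\mathsf{A}}(T^{\K}_{x})=\mathsf{A}(T^{\K}_{x})=g(x)$, so $R_{U,G}(f)=(T^{E}(f)(x))_{x\in U}=g$, which proves surjectivity. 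I expect the main obstacle to be the construction and justification of $\mathsf{A}$: both its well-definedness and its weak continuity hinge on correctly exploiting the auxiliary functions $f_{e'}$ supplied by the restriction space together with the separating property of $G$. Once $\mathsf{A}$ is in hand, the remaining steps amount to invoking \prettyref{prop:ext_B_set_uni} and a bookkeeping check running $\widehat{\mathsf{A}}$ through $S$ and consistency.
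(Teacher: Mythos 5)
Your proposal is correct and follows essentially the same route as the paper: the same choice of $X$, $Y$, $Z$, the same operator $\mathsf{A}(T^{\K}_{x}):=g(x)$, an appeal to \prettyref{prop:ext_B_set_uni}, and the same final bookkeeping through $S$ and consistency. The only differences are cosmetic — you verify the $\sigma(Y',Z)$-density of $X$ directly via the set-of-uniqueness property rather than passing through $\sigma(Z',Z)$-density and restriction as the paper does, and you spell out the well-definedness of $\mathsf{A}$ (which the paper compresses into ``since $G$ is $\sigma(E',E)$-dense''), both of which are sound.
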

\begin{proof}
Let $f\in \mathcal{F}\nu_{G}(U,E)$. 
We set $X:=\operatorname{span}\{T^{\K}_{x}\;|\;x\in U\}$, $Y:=\F$ and $Z:=\FV$. 
The consistency of $(T^{E},T^{\K})$ for $(\mathcal{F},E)$ yields that $X\subset Y'$.
From $U$ being a set of uniqueness of $Z$ follows that $X$ is 
$\sigma(Z',Z)$-dense. Since $B_{Z}$ is a compact subset of $Y$, it follows that $Z$ is a linear subspace of $Y$ 
and the inclusion $Z\hookrightarrow Y$ is continuous, which yields $y'_{\mid Z}\in Z'$ for every $y'\in Y'$. 
Thus $X$ is $\sigma(Y',Z)$-dense. 
Let $\mathsf{A}\colon X\to E$ be the linear map determined by $\mathsf{A}(T^{\K}_{x}):=f(x)$. 
The map $\mathsf{A}$ is well-defined since $G$ is $\sigma(E',E)$-dense. 
Due to
\[
e'(\mathsf{A}(T^{\K}_{x}))=(e'\circ f)(x)=T^{\K}_{x}(f_{e'})
\]
for every $e'\in G$ and $x\in U$ we have that $\mathsf{A}$ is $\sigma(X,Z)$-$\sigma(E,G)$-continuous. 
We apply \prettyref{prop:ext_B_set_uni} and gain an extension $\widehat{\mathsf{A}}\in Y\varepsilon E$ 
of $\mathsf{A}$ such that $\widehat{\mathsf{A}}(B_{Z}^{\circ Y'})$ is bounded in $E$. 
We set $F:=S(\widehat{\mathsf{A}})\in\Feps$ and get for all $x\in U$ that
\[
T^{E}(F)(x)=T^{E}S(\widehat{\mathsf{A}})(x)=\widehat{\mathsf{A}}(T^{\K}_{x})=f(x)
\]
by consistency for $(\mathcal{F},E)$, implying $R_{U,G}(F)=f$.
\end{proof}

Let $\Omega\subset\R^{d}$ be open, $E$ an lcHs and 
$P(\partial)^{E}\colon\mathcal{C}^{\infty}(\Omega,E)\to\mathcal{C}^{\infty}(\Omega,E)$ 
a linear partial differential operator which is hypoelliptic if $E=\K$. We define the space
\[
\mathcal{C}^{\infty}_{P(\partial)}(\Omega,E):=\{f\in\mathcal{C}^{\infty}(\Omega,E)\;|\;f\in\ker P(\partial)^{E}\}
\]
of zero solutions and for a continuous weight $\nu\colon\Omega\to(0,\infty)$ the weighted space of zero solutions 
\[
  \mathcal{C}\nu^{\infty}_{P(\partial)}(\Omega,E)
:=\{f\in\mathcal{C}^{\infty}_{P(\partial)}(\Omega,E)\;|\;\forall\;\alpha\in\mathfrak{A}:\;
|f|_{\nu,\alpha}:=\sup_{x\in\Omega}p_{\alpha}(f(x))\nu(x)<\infty\}.
\]

\begin{prop}\label{prop:co_top_isomorphism}
Let $\Omega\subset\R^{d}$ be open, $E$ a locally complete lcHs and $P(\partial)^{\K}$ a hypoelliptic 
linear partial differential operator.
Then via $S_{(\mathcal{C}^{\infty}_{P(\partial)}(\Omega),\tau_{co})}$ 
holds $(\mathcal{C}^{\infty}_{P(\partial)}(\Omega),\tau_{co})\varepsilon E
\cong (\mathcal{C}^{\infty}_{P(\partial)}(\Omega,E),\tau_{co})$ 
and $(\mathcal{C}^{\infty}_{P(\partial)}(\Omega,E),\tau_{co})
=(\mathcal{C}^{\infty}_{P(\partial)}(\Omega,E),\tau_{\mathcal{C}^{\infty}})$ 
as locally convex spaces. 
\end{prop}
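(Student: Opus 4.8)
The plan is to treat the two assertions separately, handling the coincidence $\tau_{co}=\tau_{\mathcal{C}^{\infty}}$ on the zero solutions first, because it lets me regard $(\mathcal{C}^{\infty}_{P(\partial)}(\Omega),\tau_{co})$ as a \emph{topological} subspace of $(\mathcal{C}^{\infty}(\Omega),\tau_{\mathcal{C}^{\infty}})$ and thereby reduce the $\varepsilon$-isomorphism to the known scalar case for $\mathcal{C}^{\infty}$.

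For the topology coincidence the inequality $\tau_{co}\leq\tau_{\mathcal{C}^{\infty}}$ is trivial, so only $\tau_{\mathcal{C}^{\infty}}\leq\tau_{co}$ has to be shown. In the scalar case this is precisely the hypoelliptic interior a priori estimate: since $P(\partial)^{\K}$ is hypoelliptic, $\ker P(\partial)^{\K}$ is a closed subspace of $\mathcal{C}^{\infty}(\Omega)$ on which the topology induced by $\mathcal{D}'(\Omega)$ already agrees with $\tau_{\mathcal{C}^{\infty}}$; concretely, for every compact $K\subset\Omega$ and $m\in\N_{0}$ there are a compact $K'\subset\Omega$ and $C>0$ with $\sup_{x\in K,|\beta|\leq m}|\partial^{\beta}g(x)|\leq C\sup_{x\in K'}|g(x)|$ for all $g\in\mathcal{C}^{\infty}_{P(\partial)}(\Omega)$. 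I would lift this to the vector-valued level by duality: for $f\in\mathcal{C}^{\infty}_{P(\partial)}(\Omega,E)$, $\alpha\in\mathfrak{A}$ and $e'\in E'$ with $|e'(\cdot)|\leq p_{\alpha}(\cdot)$ the composition $g_{e'}:=e'\circ f$ lies in $\mathcal{C}^{\infty}_{P(\partial)}(\Omega)$, because $e'$ commutes with differentiation, i.e.\ $\partial^{\beta}(e'\circ f)=e'\circ(\partial^{\beta})^{E}f$, whence $P(\partial)^{\K}g_{e'}=e'\circ P(\partial)^{E}f=0$. Applying the scalar estimate to $g_{e'}$ and using the Hahn--Banach formula $p_{\alpha}(y)=\sup\{|e'(y)|:e'\in E',\,|e'(\cdot)|\leq p_{\alpha}(\cdot)\}$ turns the scalar bound into $|f|_{K,m,\alpha}\leq C\,|f|_{K',0,\alpha}$, which is the missing inequality.

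For the $\varepsilon$-isomorphism I would start from the classical fact that, for locally complete $E$, the spaces $(\mathcal{C}^{\infty}(\Omega),\tau_{\mathcal{C}^{\infty}})$ and $(\mathcal{C}^{\infty}(\Omega,E),\tau_{\mathcal{C}^{\infty}})$ are $\varepsilon$-compatible, i.e.\ $S_{0}\colon\mathcal{C}^{\infty}(\Omega)\varepsilon E\to\mathcal{C}^{\infty}(\Omega,E)$ is a topological isomorphism, together with the consistency of the differentiation generator, which gives $(\partial^{\beta})^{E}S_{0}(u)(x)=u(\delta_{x}\circ\partial^{\beta})$ and hence that $S_{0}$ intertwines $P(\partial)^{\K}\varepsilon\id_{E}$ with $P(\partial)^{E}$. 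Since $\mathcal{C}^{\infty}_{P(\partial)}(\Omega)=\ker P(\partial)^{\K}$ carries the subspace topology by the first step, applying the left-exact functor $(\cdot)\varepsilon E$ to the exact sequence $0\to\ker P(\partial)^{\K}\to\mathcal{C}^{\infty}(\Omega)\xrightarrow{P(\partial)^{\K}}\mathcal{C}^{\infty}(\Omega)$ identifies $\mathcal{C}^{\infty}_{P(\partial)}(\Omega)\varepsilon E$ topologically with $\ker(P(\partial)^{\K}\varepsilon\id_{E})\subset\mathcal{C}^{\infty}(\Omega)\varepsilon E$. Transporting this kernel through $S_{0}$ and invoking the intertwining relation identifies it with $\mathcal{C}^{\infty}_{P(\partial)}(\Omega,E)=\ker P(\partial)^{E}$; since $S_{0}$ is compatible with the restriction of point evaluations, the composite embedding is exactly $S_{(\mathcal{C}^{\infty}_{P(\partial)}(\Omega),\tau_{co})}$, so this $S$ is a topological isomorphism into and, by the kernel identification, onto. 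The topologies on the range match by the first step, upgrading the algebraic identification to a topological one.

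The step I expect to be the main obstacle is the identification $\mathcal{C}^{\infty}_{P(\partial)}(\Omega)\varepsilon E\cong\ker(P(\partial)^{\K}\varepsilon\id_{E})$ as topological subspaces of $\mathcal{C}^{\infty}(\Omega)\varepsilon E$: it requires that $(\cdot)\varepsilon E$ respects closed subspaces and kernels with the correct (subspace) topology, for which one leans on the fact that $\mathcal{C}^{\infty}(\Omega)$ and its hypoelliptic kernel are Fr\'echet--Schwartz (in particular nuclear) spaces, so that the left-exactness of $(\cdot)\varepsilon E$ and the coincidence of the subspace topology with the $\varepsilon$-product topology of the kernel are available. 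Verifying these structural hypotheses, and that local completeness of $E$ suffices for the scalar $\mathcal{C}^{\infty}$-isomorphism, is where the genuine work lies; the surjectivity of $S$ then follows formally.
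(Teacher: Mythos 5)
Your proposal is correct in substance, but it takes a genuinely different route from the paper, in two respects. First, you prove the vector-valued coincidence $\tau_{co}=\tau_{\mathcal{C}^{\infty}}$ on $\mathcal{C}^{\infty}_{P(\partial)}(\Omega,E)$ \emph{directly}, by lifting the scalar hypoelliptic estimates through the Hahn--Banach sup-formula $p_{\alpha}(y)=\sup\{|e'(y)|\;:\;e'\in E',\,|e'|\leq p_{\alpha}\}$ applied to $e'\circ f\in\mathcal{C}^{\infty}_{P(\partial)}(\Omega)$; this is a valid argument (the constants in the scalar estimate are uniform over the kernel, hence over $e'$), and it even works for an arbitrary lcHs $E$, with no completeness assumption. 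The paper goes the other way around: it never proves this coincidence directly, but quotes the ready-made $\varepsilon$-compatibility of $(\mathcal{C}^{\infty}_{P(\partial)}(\Omega),\tau_{\mathcal{C}^{\infty}})$ with $(\mathcal{C}^{\infty}_{P(\partial)}(\Omega,E),\tau_{\mathcal{C}^{\infty}})$ from \cite[Example 18 b)]{kruse2017}, observes that the scalar identity $\tau_{co}=\tau_{\mathcal{C}^{\infty}}$ makes the two $\varepsilon$-products literally equal, and then \emph{deduces} the vector-valued topology coincidence from having one and the same map $S$ realised as an isomorphism onto the range with $\tau_{\mathcal{C}^{\infty}}$ and as an isomorphism into the range with $\tau_{co}$ (via the first part of the proof of \cite[Theorem 14]{kruse2017}). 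Second, where the paper cites the kernel-level $\varepsilon$-compatibility as known, you reconstruct it from the $\varepsilon$-compatibility of the full space $\mathcal{C}^{\infty}(\Omega)$ plus left-exactness of $\cdot\,\varepsilon E$ and the topological identification $\mathcal{C}^{\infty}_{P(\partial)}(\Omega)\varepsilon E\cong\ker(P(\partial)^{\K}\varepsilon\id_{E})$; you correctly flag this as the crux, and it does go through for a closed subspace $F$ of the Fr\'echet--Schwartz space $Y=\mathcal{C}^{\infty}(\Omega)$: a $u$ with $u\circ(P(\partial)^{\K})^{t}=0$ vanishes on the $\kappa(Y',Y)$-closure of the range of the transpose, which equals $F^{\perp}$ since $\kappa$ is compatible with the duality, and the factored map on $F'$ is $\kappa$-continuous because $\kappa=b$ on the duals of these Montel spaces and the continuous bijection $Y_{b}'/F^{\perp}\to F_{b}'$ between DFS spaces is open. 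So your sketch can be completed; what it buys is a self-contained argument (and the topology coincidence in greater generality), at the price of duality/exactness machinery that the paper sidesteps entirely by its citation and by its slicker two-representations trick for the topology statement.
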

\begin{proof}
We already know that 
\[
S_{(\mathcal{C}^{\infty}_{P(\partial)}(\Omega),\tau_{\mathcal{C}^{\infty}})}\colon
(\mathcal{C}^{\infty}_{P(\partial)}(\Omega),\tau_{\mathcal{C}^{\infty}})\varepsilon E \to
(\mathcal{C}^{\infty}_{P(\partial)}(\Omega,E),\tau_{\mathcal{C}^{\infty}})
\]
is an isomorphism by \cite[Example 18 b), p.\ 1528]{kruse2017}. 
From $\tau_{co}=\tau_{\mathcal{C}^{\infty}}$ on $\mathcal{C}^{\infty}_{P(\partial)}(\Omega)$ by the hypoellipticity 
of $P(\partial)^{\K}$ (see e.g.\ \cite[p.\ 690]{F/J/W}) follows that 
$(\mathcal{C}^{\infty}_{P(\partial)}(\Omega),\tau_{co})\varepsilon E
=(\mathcal{C}^{\infty}_{P(\partial)}(\Omega),\tau_{\mathcal{C}^{\infty}})\varepsilon E$. 
Thus $S_{(\mathcal{C}^{\infty}_{P(\partial)}(\Omega),\tau_{co})}(u)
=S_{(\mathcal{C}^{\infty}_{P(\partial)}(\Omega),\tau_{\mathcal{C}^{\infty}})}(u)
\in\mathcal{C}^{\infty}_{P(\partial)}(\Omega,E)$ 
for all $u\in(\mathcal{C}^{\infty}_{P(\partial)}(\Omega),\tau_{co})\varepsilon E $. In particular, we obtain that 
\[
S_{(\mathcal{C}^{\infty}_{P(\partial)}(\Omega),\tau_{co})}\colon
(\mathcal{C}^{\infty}_{P(\partial)}(\Omega),\tau_{co})\varepsilon E \to
(\mathcal{C}^{\infty}_{P(\partial)}(\Omega,E),\tau_{\mathcal{C}^{\infty}})
\]
is an isomorphism. From the first part of the proof of \cite[Theorem 14, p.\ 1524]{kruse2017} with 
$(T^{E},T^{\K}):=(\id_{E^{\Omega}},\id_{\K^{\Omega}})$ we deduce that
\[
S_{(\mathcal{C}^{\infty}_{P(\partial)}(\Omega),\tau_{co})}\colon
(\mathcal{C}^{\infty}_{P(\partial)}(\Omega),\tau_{co})\varepsilon E \to
(\mathcal{C}^{\infty}_{P(\partial)}(\Omega,E),\tau_{co})
\]
is an isomorphism into and from
\[
S_{(\mathcal{C}^{\infty}_{P(\partial)}(\Omega),\tau_{co})}
\bigl((\mathcal{C}^{\infty}_{P(\partial)}(\Omega),\tau_{co})\varepsilon E\bigr)
=\mathcal{C}^{\infty}_{P(\partial)}(\Omega,E)
\]
that $(\mathcal{C}^{\infty}_{P(\partial)}(\Omega,E),\tau_{co})
=(\mathcal{C}^{\infty}_{P(\partial)}(\Omega,E),\tau_{\mathcal{C}^{\infty}})$ 
as locally convex spaces, which proves our statement. 
\end{proof}

\begin{prop}\label{prop:hypo_weighted_Banach}
If $\Omega\subset\R^{d}$ is open, $P(\partial)^{\K}$ a hypoelliptic linear partial differential operator and 
$\nu\colon\Omega\to(0,\infty)$ continuous,
then $\mathcal{C}\nu^{\infty}_{P(\partial)}(\Omega)$ is a Banach space.
\end{prop}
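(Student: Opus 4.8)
The plan is to prove completeness directly, reducing it to the completeness of the unweighted solution space $(\mathcal{C}^{\infty}_{P(\partial)}(\Omega),\tau_{co})$ together with a routine weighted-sup-norm argument. First I would note that $|\cdot|_{\nu}$ is a genuine norm and not merely a seminorm: since $\nu(x)>0$ for every $x\in\Omega$, the equality $|f|_{\nu}=0$ forces $f(x)=0$ pointwise, hence $f=0$. So it only remains to establish completeness.

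Let $(f_{n})$ be a $|\cdot|_{\nu}$-Cauchy sequence. The first step is to pass to local uniform convergence. For a compact $K\subset\Omega$ the continuity and positivity of $\nu$ give $c_{K}:=\min_{x\in K}\nu(x)>0$, whence $\sup_{x\in K}|f_{n}(x)-f_{m}(x)|\leq c_{K}^{-1}|f_{n}-f_{m}|_{\nu}$. Thus $(f_{n})$ is Cauchy for $\tau_{co}$ in $\mathcal{C}^{\infty}_{P(\partial)}(\Omega)$. The key point is that this space is $\tau_{co}$-complete: by hypoellipticity of $P(\partial)^{\K}$ one has $\tau_{co}=\tau_{\mathcal{C}^{\infty}}$ there (as used in the proof of \prettyref{prop:co_top_isomorphism}), and $\mathcal{C}^{\infty}_{P(\partial)}(\Omega)$ is closed in the Fr\'echet space $(\mathcal{C}(\Omega),\tau_{co})$. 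Closedness is exactly where hypoellipticity is indispensable: a locally uniform limit $f$ of zero solutions converges also in $\mathcal{D}'(\Omega)$, so $P(\partial)f=0$ holds distributionally, and hypoellipticity upgrades $f$ to a classical smooth zero solution. Hence $(f_{n})$ converges in $\tau_{co}$ to some $f\in\mathcal{C}^{\infty}_{P(\partial)}(\Omega)$; in particular $f_{n}(x)\to f(x)$ for every $x\in\Omega$.

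It then remains to see that $f$ lies in $\mathcal{C}\nu^{\infty}_{P(\partial)}(\Omega)$ and that $f_{n}\to f$ for $|\cdot|_{\nu}$. Being Cauchy, the sequence is $|\cdot|_{\nu}$-bounded by some $M<\infty$; passing to the pointwise limit in $|f_{n}(x)|\nu(x)\leq M$ gives $|f(x)|\nu(x)\leq M$ for all $x\in\Omega$, so $|f|_{\nu}\leq M<\infty$ and $f$ belongs to the space. For norm convergence I would fix $\varepsilon>0$, choose $N$ with $|f_{n}-f_{m}|_{\nu}\leq\varepsilon$ for $n,m\geq N$, then fix $n\geq N$ and $x\in\Omega$ and let $m\to\infty$ in $|f_{n}(x)-f_{m}(x)|\nu(x)\leq\varepsilon$, using $f_{m}(x)\to f(x)$, to obtain $|f_{n}(x)-f(x)|\nu(x)\leq\varepsilon$; taking the supremum over $x$ yields $|f_{n}-f|_{\nu}\leq\varepsilon$ for all $n\geq N$. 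This shows $f_{n}\to f$ in the weighted norm, completing the proof.

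The only genuinely non-formal ingredient is the $\tau_{co}$-completeness of the solution space, i.e.\ that the locally uniform limit of smooth zero solutions is again a smooth zero solution; everything else is the standard interchange of limit and supremum for weighted sup-norm spaces. I expect the write-up to spend its effort precisely on justifying this closedness via the distributional convergence argument and the hypoellipticity hypothesis, while the boundedness of the limit and the convergence in $|\cdot|_{\nu}$ follow by the elementary pointwise-to-supremum passage indicated above.
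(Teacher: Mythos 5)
Your proposal is correct and follows essentially the same route as the paper: reduce to $\tau_{co}$-Cauchyness via the lower bound $\min_{K}\nu>0$ on compacta, invoke the $\tau_{co}$-completeness of $\mathcal{C}^{\infty}_{P(\partial)}(\Omega)$, and then pass from pointwise convergence to convergence in $|\cdot|_{\nu}$ by the standard limit-in-$m$ argument. The only cosmetic difference is that the paper simply cites that $(\mathcal{C}^{\infty}_{P(\partial)}(\Omega),\tau_{co})$ is a Fr\'echet--Schwartz space (hence complete), whereas you sketch the underlying closedness argument via distributional convergence and hypoellipticity, which is a valid justification of the same fact.
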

\begin{proof}
It suffices to prove that $\mathcal{C}\nu^{\infty}_{P(\partial)}(\Omega)$ is complete. 
Let $(f_{n})_{n\in\N}$ be a Cauchy sequence in $\mathcal{C}\nu^{\infty}_{P(\partial)}(\Omega)$. 
For every compact $K\subset\Omega$ we have
\begin{equation}\label{eq:hypo_weighted_Banach}
\sup_{x\in K}|f(x)|\leq \sup_{z\in K}\nu(z)^{-1}\sup_{x\in K}|f(x)|\nu(x)\leq \sup_{z\in K}\nu(z)^{-1}|f|_{\nu}, 
\quad f\in\mathcal{C}\nu^{\infty}_{P(\partial)}(\Omega),
\end{equation}
yielding that $(f_{n})_{n\in\N}$ is Cauchy sequence in $(\mathcal{C}^{\infty}_{P(\partial)}(\Omega),\tau_{co})$. 
$(\mathcal{C}^{\infty}_{P(\partial)}(\Omega),\tau_{co})$ is a Fr\'echet-Schwartz space 
(see e.g.\ \cite[p.\ 690]{F/J/W}), in particular complete, and thus $(f_{n})_{n\in\N}$ has a limit $f$ in 
$(\mathcal{C}^{\infty}_{P(\partial)}(\Omega),\tau_{co})$. 
Let $\varepsilon>0$ and $x\in\Omega$. Then there is $N_{\varepsilon,x}\in\N$ such that 
for all $m\geq N_{\varepsilon,x}$ it holds that
\[
|f_{m}(x)-f(x)|<\frac{\varepsilon}{2\nu(x)}.
\]
Further, there is $N_{\varepsilon}\in\N$ such that for all $n,m\geq N_{\varepsilon}$ it holds that
\[
|f_{n}-f_{m}|_{\nu}<\frac{\varepsilon}{2}.
\]
Hence for $n\geq N_{\varepsilon}$ we choose $m\geq\max(N_{\varepsilon},N_{\varepsilon,x})$ and derive 
\[
|f_{n}(x)-f(x)|\nu(x)\leq |f_{n}(x)-f_{m}(x)|\nu(x)+|f_{m}(x)-f(x)|\nu(x)
< \frac{\varepsilon}{2}+\frac{\varepsilon}{2\nu(x)}\nu(x)=\varepsilon.
\]
It follows that $|f_{n}-f|_{\nu}\leq \varepsilon$ and $|f|_{\nu}\leq \varepsilon+|f_{n}|_{\nu}$ 
for all $n\geq N_{\varepsilon}$, implying the convergence of $(f_{n})_{n\in\N}$ to $f$ 
in $\mathcal{C}\nu^{\infty}_{P(\partial)}(\Omega)$.
\end{proof}

\begin{cor}\label{cor:hypo_weighted_ext_unique}
Let $E$ be a locally complete lcHs, $G\subset E'$ determine boundedness, $\Omega\subset\R^{d}$ open, 
$P(\partial)^{\K}$ a hypoelliptic linear partial differential operator, $\nu\colon\Omega\to(0,\infty)$ continuous and 
$U$ a set of uniqueness for $(\id_{\K^{\Omega}},\mathcal{C}\nu^{\infty}_{P(\partial)})$.
If $f\colon U\to E$ is a function such that $e'\circ f$ admits an extension 
$f_{e'}\in\mathcal{C}\nu^{\infty}_{P(\partial)}(\Omega)$ 
for every $e'\in G$, then there exists a unique extension $F\in\mathcal{C}\nu^{\infty}_{P(\partial)}(\Omega,E)$ of $f$.
\end{cor}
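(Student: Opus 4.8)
The plan is to deduce this as the concrete instance of \prettyref{thm:ext_B_unique} obtained from the choices $\omega:=\Omega$, $T^{E}:=\id_{E^{\Omega}}$, $T^{\K}:=\id_{\K^{\Omega}}$, $\F:=(\mathcal{C}^{\infty}_{P(\partial)}(\Omega),\tau_{co})$ and $\FE:=(\mathcal{C}^{\infty}_{P(\partial)}(\Omega,E),\tau_{co})$, so that $\FV=\mathcal{C}\nu^{\infty}_{P(\partial)}(\Omega)$ and $\FVE=\mathcal{C}\nu^{\infty}_{P(\partial)}(\Omega,E)$ by \prettyref{def:standard_space}. The first task is to verify the hypotheses of that theorem. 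Local completeness of $E$ and that $G$ determines boundedness are assumed. That $\F$ and $\FE$ are $\varepsilon$-into-compatible is even improved to full $\varepsilon$-compatibility by \prettyref{prop:co_top_isomorphism}. With the identity maps the generator property for $(\mathcal{F}\nu,E)$ is immediate from the definition of $|\cdot|_{\nu,\alpha}$. The pair $(\id_{E^{\Omega}},\id_{\K^{\Omega}})$ is consistent for $(\mathcal{F},E)$ because $S(u)\in\FE$ by \prettyref{prop:co_top_isomorphism} and $T^{E}S(u)(x)=S(u)(x)=u(\delta_{x})=u(T^{\K}_{x})$; it is strong because every $e'\in E'$ commutes with the partial derivatives, whence $e'\circ(\partial^{\beta})^{E}f=\partial^{\beta}(e'\circ f)$ and $P(\partial)^{\K}(e'\circ f)=e'\circ P(\partial)^{E}f=0$, so that $e'\circ f\in\mathcal{C}^{\infty}_{P(\partial)}(\Omega)=\F$ for all $f\in\FE$, while the identity $(e'\circ f)(x)=e'(f(x))$ is trivial.

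The technical heart of the argument, and the step I expect to be the main obstacle, is to show that the closed unit ball $B_{\FV}$ is a compact subset of $\F$; this is precisely the compactness that compensates for the lack of relatively compact sets in the infinite-dimensional Banach space $\FV$. I would proceed as follows. The estimate \eqref{eq:hypo_weighted_Banach} bounds every $\tau_{co}$-seminorm on $B_{\FV}$, so $B_{\FV}$ is bounded in $\F$; since $(\mathcal{C}^{\infty}_{P(\partial)}(\Omega),\tau_{co})$ is a Fr\'echet--Schwartz, hence semi-Montel, space, $B_{\FV}$ is relatively compact in $\F$. A pointwise-limit argument shows $B_{\FV}$ is closed in $\F$: if $f_{n}\in B_{\FV}$ and $f_{n}\to f$ in $\tau_{co}$, then $|f(x)|\nu(x)=\lim_{n}|f_{n}(x)|\nu(x)\leq 1$ for every $x\in\Omega$, so $|f|_{\nu}\leq 1$. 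Hence $B_{\FV}$ is compact in $\F$, and in particular the inclusion $i\colon\FV\hookrightarrow\F$ is continuous since it maps the unit ball to a bounded set. The Banach property of $\FV$ required by \prettyref{thm:ext_B_unique} is supplied by \prettyref{prop:hypo_weighted_Banach}, and $U$ is a set of uniqueness by assumption.

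It then remains to identify the restriction space and read off existence and uniqueness. Because $T^{\K}=\id_{\K^{\Omega}}$, the defining condition $T^{\K}(f_{e'})(x)=(e'\circ f)(x)$ for $x\in U$ says exactly that $f_{e'}\in\mathcal{C}\nu^{\infty}_{P(\partial)}(\Omega)$ extends $e'\circ f$, so the hypothesis of the corollary is precisely the assertion $f\in\mathcal{F}\nu_{G}(U,E)$. Applying \prettyref{thm:ext_B_unique} produces an $F\in\Feps$ with $R_{U,G}(F)=f$, that is $F(x)=T^{E}(F)(x)=f(x)$ for all $x\in U$; and since $\F$ and $\FE$ are $\varepsilon$-compatible, \prettyref{prop:mingle-mangle}~c) gives $\Feps=\FVE=\mathcal{C}\nu^{\infty}_{P(\partial)}(\Omega,E)$, so $F$ is the sought extension. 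For uniqueness I would invoke the injectivity of $R_{U,G}$ recorded after \prettyref{rem:R_well-defined}, which rests on $U$ being a set of uniqueness together with $G$ being separating; the latter follows from $G$ determining boundedness, for a nonzero $x$ with $e'(x)=0$ for all $e'\in G$ would render the unbounded set $\{nx\}_{n\in\N}$ $\sigma(E,G)$-bounded, a contradiction.
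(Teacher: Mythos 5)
Your proposal is correct and follows essentially the same route as the paper: the same choices $\F=(\mathcal{C}^{\infty}_{P(\partial)}(\Omega),\tau_{co})$, $\FE=(\mathcal{C}^{\infty}_{P(\partial)}(\Omega,E),\tau_{co})$ with the identity generator, the same verification that $B_{\FV}$ is bounded via \eqref{eq:hypo_weighted_Banach} and $\tau_{co}$-closed, hence compact in the Fr\'echet--Schwartz space $\F$, the identification $\Feps=\FVE$ via \prettyref{prop:mingle-mangle}~c), and the application of \prettyref{thm:ext_B_unique}. The only differences are that you verify strength, consistency and the separating property of $G$ by hand where the paper cites \prettyref{prop:co_top_isomorphism} and the discussion following \prettyref{rem:R_well-defined}, which are harmless elaborations.
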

\begin{proof}
We choose $\F:=(\mathcal{C}^{\infty}_{P(\partial)}(\Omega),\tau_{co})$ 
and $\FE:=(\mathcal{C}^{\infty}_{P(\partial)}(\Omega,E),\tau_{co})$. Then we  
have $\mathcal{F}\nu(\Omega)=\mathcal{C}\nu^{\infty}_{P(\partial)}(\Omega)$ and 
$\mathcal{F}\nu(\Omega,E)=\mathcal{C}\nu^{\infty}_{P(\partial)}(\Omega,E)$
with the generator $(T^{E},T^{\K}):=(\operatorname{\id}_{E^{\Omega}},\operatorname{\id}_{\K^{\Omega}})$ 
for $(\mathcal{F}\nu,E)$.
We note that $\F$ and $\FE$ are $\varepsilon$-compatible 
and $(T^{E},T^{\K})$ is a strong, consistent family for $(\mathcal{F},E)$ by \prettyref{prop:co_top_isomorphism}. 
We observe that $\mathcal{F}\nu(\Omega)$ is a Banach space by \prettyref{prop:hypo_weighted_Banach} and 
for every compact $K\subset\Omega$ we have 
\[
\sup_{x\in K}|f(x)|\underset{\eqref{eq:hypo_weighted_Banach}}{\leq}\sup_{z\in K}\nu(z)^{-1}|f|_{\nu} 
\leq \sup_{z\in K}\nu(z)^{-1},
\quad f\in B_{\mathcal{F}\nu(\Omega)},
\]
yielding that $B_{\mathcal{F}\nu(\Omega)}$ is bounded in $\F$.  
The space $\F=(\mathcal{C}^{\infty}_{P(\partial)}(\Omega),\tau_{co})$ is a Fr\'echet-Schwartz space, 
thus a Montel space, and it is easy to check that $B_{\mathcal{F}\nu(\Omega)}$ is $\tau_{co}$-closed. 
Hence the bounded and $\tau_{co}$-closed set $B_{\mathcal{F}\nu(\Omega)}$ is compact in $\F$. 
Finally, we remark that the $\varepsilon$-compatibility of $\F$ and $\FE$ 
in combination with the consistency of $(\id_{E^{\Omega}},\id_{\K^{\Omega}})$ for $(\mathcal{F},E)$ gives
$\Feps=\FVE$ as linear spaces by \prettyref{prop:mingle-mangle} c). 
From \prettyref{thm:ext_B_unique} follows our statement.
\end{proof}

If $\Omega=\D\subset\C$ is the open unit disc, $P(\partial)=\overline{\partial}$ the Cauchy--Riemann operator 
and $\nu=1$ on $\D$, then $\mathcal{C}\nu^{\infty}_{P(\partial)}(\Omega,E)=H^{\infty}(\D,E)$ and 
a sequence $U:=(z_{n})_{n\in\N}\subset\D$ of distinct elements is a set of uniqueness for $(\id_{\C^{\D}},H^{\infty})$ 
if and only if it satisfies the Blaschke condition $\sum_{n\in\N}(1-|z_{n}|)=\infty$ 
(see e.g.\ \cite[15.23 Theorem, p.\ 303]{rudin1970}).

For a continuous function $\nu\colon\D\to(0,\infty)$ and a complex lcHs $E$ we define the Bloch type spaces 
\[
\mathcal{B}\nu(\D,E):=\{f\in\mathcal{O}(\D,E)\;|\;\forall\;\alpha\in\mathfrak{A}:\;|f|_{\nu,\alpha}<\infty\}
\]
with 
\[
|f|_{\nu,\alpha}:=\max\bigl(p_{\alpha}(f(0)),\sup_{z\in\D}p_{\alpha}((\partial_{\C}^{1})^{E}f(z))\nu(z)\bigr)
\]
and the complex derivative
\[
(\partial_{\C}^{1})^{E}f(z):=\lim_{\substack{h\to 0\\ h\in\C,h\neq 0}}\frac{f(z+h)-f(z)}{h},
\quad z\in\D,\;f\in\mathcal{O}(\D,E).
\]
If $E=\C$, we write $f'(z):=(\partial_{\C}^{1})^{\C}f(z)$ for $z\in\D$ and $f\in\mathcal{O}(\D)$. 

\begin{prop}\label{prop:Bloch_Banach}
If $\nu\colon\D\to(0,\infty)$ is continuous, then $\mathcal{B}\nu(\D)$ is a Banach space.
\end{prop}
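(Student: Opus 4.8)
The plan is to follow the pattern of \prettyref{prop:hypo_weighted_Banach}: since $|\cdot|_{\nu}$ is plainly a norm on $\mathcal{B}\nu(\D)$, it suffices to prove completeness. So I would start with a Cauchy sequence $(f_{n})_{n\in\N}$ in $\mathcal{B}\nu(\D)$ and first show that it is Cauchy, hence convergent, in the complete space $(\mathcal{O}(\D),\tau_{co})$. The limit $f\in\mathcal{O}(\D)$ produced there will be the candidate, and it remains to check that $f\in\mathcal{B}\nu(\D)$ and that $f_{n}\to f$ in the norm.

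The step that differs from \prettyref{prop:hypo_weighted_Banach}, and which I expect to be the main obstacle, is the passage from the norm to the compact-open topology. In \eqref{eq:hypo_weighted_Banach} the weight dominated the function values directly, but here $|f|_{\nu}$ only controls $|f(0)|$ and the weighted supremum of $f'$, so the values of $f$ must be recovered by integrating the derivative along the segment from $0$ to $z$. Writing $f(z)=f(0)+z\int_{0}^{1}f'(tz)\,\d t$ and bounding $|f'(tz)|\leq |f|_{\nu}/\nu(tz)\leq |f|_{\nu}/m_{r}$ for $z$ in a compact set $K\subset\{w\in\C\;:\;|w|\leq r\}$ with $r<1$, where $m_{r}:=\inf_{|w|\leq r}\nu(w)>0$ because $\nu$ is continuous and strictly positive on the compact set $\{w\in\C\;:\;|w|\leq r\}$, I obtain
\[
\sup_{z\in K}|f(z)|\leq |f(0)|+\frac{r}{m_{r}}|f|_{\nu}\leq \Bigl(1+\frac{r}{m_{r}}\Bigr)|f|_{\nu},\quad f\in\mathcal{B}\nu(\D).
\]
This yields the continuity of the inclusion $\mathcal{B}\nu(\D)\hookrightarrow(\mathcal{O}(\D),\tau_{co})$, so $(f_{n})_{n\in\N}$ is Cauchy in the latter Fréchet–Schwartz space and converges to some $f\in\mathcal{O}(\D)$.

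Once $f$ is in hand, I would invoke the Weierstrass convergence theorem: $\tau_{co}$-convergence of holomorphic functions forces $\tau_{co}$-convergence of their derivatives, so $f_{n}'\to f'$ uniformly on compact subsets and in particular $f_{n}(0)\to f(0)$ and $f_{n}'(z)\to f'(z)$ pointwise for every $z\in\D$. With this pointwise convergence, the remainder is the same $\varepsilon/2$-argument as in \prettyref{prop:hypo_weighted_Banach}: given $\varepsilon>0$ choose $N_{\varepsilon}$ with $|f_{n}-f_{m}|_{\nu}<\varepsilon/2$ for $n,m\geq N_{\varepsilon}$, fix $z\in\D$, pick $m\geq N_{\varepsilon}$ large (depending on $z$) so that $|f_{m}'(z)-f'(z)|\nu(z)<\varepsilon/2$ and $|f_{m}(0)-f(0)|<\varepsilon/2$, and conclude $|f_{n}'(z)-f'(z)|\nu(z)\leq\varepsilon$ together with $|f_{n}(0)-f(0)|\leq\varepsilon$ for $n\geq N_{\varepsilon}$. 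Taking the supremum over $z$ gives $|f_{n}-f|_{\nu}\leq\varepsilon$ for $n\geq N_{\varepsilon}$, whence $f_{n}\to f$ in the norm and $|f|_{\nu}\leq\varepsilon+|f_{n}|_{\nu}<\infty$, so $f\in\mathcal{B}\nu(\D)$. This establishes completeness, and therefore $\mathcal{B}\nu(\D)$ is a Banach space.
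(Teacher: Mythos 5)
Your proposal is correct and follows essentially the same route as the paper: the estimate $\sup_{|z|\leq r}|f(z)|\leq(1+\nicefrac{r}{m_{r}})|f|_{\nu}$ you derive by integrating $f'$ along the segment from $0$ to $z$ is the analogue of \eqref{eq:Bloch}, and the rest (completeness of $(\mathcal{O}(\D),\tau_{co})$, Weierstrass' theorem for the derivatives, and the $\nicefrac{\varepsilon}{2}$-argument from \prettyref{prop:hypo_weighted_Banach}) is exactly what the paper's appeal to ``analogously to the proof of \prettyref{prop:hypo_weighted_Banach}'' compresses. You have merely spelled out the implicit use of the Weierstrass convergence theorem, which the paper leaves to the reader.
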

\begin{proof}
Let $f\in\mathcal{B}\nu(\D)$. From the estimates 
\begin{align*}
|f(z)|&\leq |f(0)|+\bigl|\int_{0}^{z}f'(\zeta)\d\zeta\bigr|
\leq |f(0)|+\frac{|z|}{\min_{\xi\in[0,z]}\nu(\xi)}\sup_{\zeta\in[0,z]}|f'(\zeta)|\nu(\zeta)\\
&\leq 2\max\Bigl(1,\frac{|z|}{\min_{\xi\in[0,z]}\nu(\xi)}\Bigr)|f|_{\nu}
\end{align*}
for every $z\in\D$ and 
\begin{equation}\label{eq:Bloch}
\max_{|z|\leq r}|f(z)|\leq  2\max\Bigl(1,\frac{r}{\min_{|z|\leq r}\nu(z)}\Bigr)|f|_{\nu}
\end{equation}
for all $0<r<1$ and $f\in\mathcal{B}\nu(\D)$ it follows that 
$\mathcal{B}\nu(\D)$ is a Banach space 
by using the completeness of $(\mathcal{O}(\D),\tau_{co})$ analogously to the proof 
of \prettyref{prop:hypo_weighted_Banach}.
\end{proof}

Let $E$ be an lcHs and $\nu\colon\D\to(0,\infty)$ be continuous. 
We set $\omega:=\{0\}\cup\{(1,z)\;|\;z\in\D\}$, 
define the operator $T^{E}\colon \mathcal{O}(\D,E)\to E^{\omega}$ by 
\[
T^{E}(f)(0):=f(0)\quad\text{and}\quad T^{E}(f)(1,z):=(\partial_{\C}^{1})^{E}f(z),\;z\in\D,
\] 
and the weight $\nu_{\ast}\colon\omega\to (0,\infty)$ by 
\[
\nu_{\ast}(0):=1\quad\text{and}\quad\nu_{\ast}(1,z):=\nu(z),\;z\in\D.
\]
Then we have for every $\alpha\in\mathfrak{A}$ that
\[
 |f|_{\nu,\alpha}=\sup_{x\in\omega}p_{\alpha}\bigl(T^{E}(f)(x)\bigr)\nu_{\ast}(x),
 \quad f\in\mathcal{B}\nu(\D,E),
\]
and with $\mathcal{F}(\D,E):=\mathcal{O}(\D,E)$ we observe that 
$\mathcal{F}\nu_{\ast}(\D,E)=\mathcal{B}\nu(\D,E)$ with generator $(T^{E},T^{\C})$.

\begin{cor}\label{cor:Bloch_ext_unique}
Let $E$ be a locally complete lcHs, $G\subset E'$ determine boundedness, $\nu\colon\D\to(0,\infty)$ continuous 
and $U_{\ast}\subset\D$ have an accumulation point in $\D$. 
If $f\colon \{0\}\cup(\{1\}\times U_{\ast})\to E$ is a function such that there is 
$f_{e'}\in\mathcal{B}\nu(\D)$ for each $e'\in G$ with $f_{e'}(0)=e'(f(0))$ and 
$f_{e'}'(z)=e'(f(1,z))$ for all $z\in U_{\ast}$, 
then there exists a unique $F\in\mathcal{B}\nu(\D,E)$ with $F(0)=f(0)$ and $(\partial_{\C}^{1})^{E}F(z)=f(1,z)$ 
for all $z\in U_{\ast}$.
\end{cor}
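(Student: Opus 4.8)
The plan is to deduce the statement from \prettyref{thm:ext_B_unique}, following the template of the proof of \prettyref{cor:hypo_weighted_ext_unique}. First I would put $\mathcal{F}(\D):=(\mathcal{O}(\D),\tau_{co})$ and $\mathcal{F}(\D,E):=(\mathcal{O}(\D,E),\tau_{co})$, so that by the discussion preceding the corollary $\mathcal{F}\nu_{\ast}(\D)=\mathcal{B}\nu(\D)$ and $\mathcal{F}\nu_{\ast}(\D,E)=\mathcal{B}\nu(\D,E)$ with generator $(T^{E},T^{\C})$, and I would set $U:=\{0\}\cup(\{1\}\times U_{\ast})$, which is exactly the domain of $f$. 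To invoke \prettyref{thm:ext_B_unique} I then need to check four things: that $\mathcal{F}(\D)$ and $\mathcal{F}(\D,E)$ are $\varepsilon$-into-compatible (in fact $\varepsilon$-compatible), that $(T^{E},T^{\C})$ is a strong, consistent family for $(\mathcal{F},E)$, that $\mathcal{B}\nu(\D)$ is a Banach space whose closed unit ball is compact in $\mathcal{F}(\D)$, and that $U$ is a set of uniqueness for $(T^{\C},\mathcal{F}\nu_{\ast})$.

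The $\varepsilon$-compatibility of $(\mathcal{O}(\D),\tau_{co})$ and $(\mathcal{O}(\D,E),\tau_{co})$ for locally complete $E$ is the classical isomorphism $S\colon(\mathcal{O}(\D),\tau_{co})\varepsilon E\to(\mathcal{O}(\D,E),\tau_{co})$ recalled in the introduction (see \cite[p.\ 232]{B/F/J}). For the generator, consistency amounts to the facts that the point evaluation $\delta_{0}$ and the functional $g\mapsto g'(z)$ lie in $(\mathcal{O}(\D),\tau_{co})'$ (by the Cauchy estimates) and commute with $S$, while strength is the elementary identities $e'(f(0))=(e'\circ f)(0)$ and $e'\circ(\partial_{\C}^{1})^{E}f=(e'\circ f)'$ for $f\in\mathcal{O}(\D,E)$ and $e'\in E'$, together with $e'\circ f\in\mathcal{O}(\D)$; all of this is standard and available from \cite{kruse2017}. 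That $\mathcal{B}\nu(\D)$ is Banach is \prettyref{prop:Bloch_Banach}. For the compactness of its closed unit ball $B$ in $\mathcal{F}(\D)$ I would argue exactly as in \prettyref{cor:hypo_weighted_ext_unique}: estimate \eqref{eq:Bloch} shows that $B$ is $\tau_{co}$-bounded, $(\mathcal{O}(\D),\tau_{co})$ is a Montel space, and $B$ is $\tau_{co}$-closed because both $|f(0)|\leq 1$ and the pointwise bounds $|f'(z)|\nu(z)\leq 1$ pass to $\tau_{co}$-limits (the derivatives converging locally uniformly by the Weierstrass theorem); hence $B$ is compact.

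The one genuinely new point, and the step I expect to carry the content, is that $U$ is a set of uniqueness. Here I would take $g\in\mathcal{B}\nu(\D)$ with $T^{\C}(g)(x)=0$ for all $x\in U$, i.e.\ $g(0)=0$ and $g'(z)=0$ for all $z\in U_{\ast}$. Since $g'\in\mathcal{O}(\D)$ and $U_{\ast}$ has an accumulation point in $\D$, the identity theorem forces $g'\equiv 0$, so $g$ is constant, and $g(0)=0$ gives $g\equiv 0$; this is precisely where the accumulation-point hypothesis enters.

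With these verifications in place, the hypothesis on $f$ says exactly that for every $e'\in G$ there is $f_{e'}\in\mathcal{B}\nu(\D)$ with $T^{\C}(f_{e'})(x)=(e'\circ f)(x)$ for all $x\in U$, i.e.\ $f\in\mathcal{F}\nu_{\ast,G}(U,E)$. \prettyref{thm:ext_B_unique} then yields $F\in\mathcal{F}_{\varepsilon}\nu_{\ast}(\D,E)$ with $R_{U,G}(F)=f$; since $\mathcal{F}(\D)$ and $\mathcal{F}(\D,E)$ are $\varepsilon$-compatible, \prettyref{prop:mingle-mangle} c) identifies $\mathcal{F}_{\varepsilon}\nu_{\ast}(\D,E)=\mathcal{B}\nu(\D,E)$, so $F\in\mathcal{B}\nu(\D,E)$ with $F(0)=f(0)$ and $(\partial_{\C}^{1})^{E}F(z)=f(1,z)$ for all $z\in U_{\ast}$. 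Uniqueness is immediate from the injectivity of $R_{U,G}$ noted after \prettyref{rem:R_well-defined}, which rests on $U$ being a set of uniqueness and $G$ being separating.
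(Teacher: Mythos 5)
Your proposal is correct and follows essentially the same route as the paper's own proof: the same choices $\mathcal{F}(\D)=(\mathcal{O}(\D),\tau_{co})$, the generator $(T^{E},T^{\C})$ with weight $\nu_{\ast}$, compactness of $B_{\mathcal{B}\nu(\D)}$ via \eqref{eq:Bloch} and the Montel property, the identity-theorem argument for the set of uniqueness $U=\{0\}\cup(\{1\}\times U_{\ast})$, the identification $\mathcal{F}_{\varepsilon}\nu_{\ast}(\D,E)=\mathcal{B}\nu(\D,E)$ from \prettyref{prop:mingle-mangle} c), and the application of \prettyref{thm:ext_B_unique}. You merely spell out details the paper delegates to citations (strength/consistency via \cite[Theorem 4.5, p.\ 368]{kruse2018_1}, $\varepsilon$-compatibility via \prettyref{prop:co_top_isomorphism}), which is harmless.
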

\begin{proof}
We take $\mathcal{F}(\D):=(\mathcal{O}(\D),\tau_{co})$ 
and $\mathcal{F}(\D,E):=(\mathcal{O}(\D,E),\tau_{co})$. Then 
we have $\mathcal{F}\nu_{\ast}(\D)=\mathcal{B}\nu(\D,E)$ 
and $\mathcal{F}\nu_{\ast}(\Omega,E)=\mathcal{B}\nu(\D,E)$ 
with the weight $\nu_{\ast}$ and generator $(T^{E},T^{\C})$
for $(\mathcal{F}\nu_{\ast},E)$ described above. 
The spaces $\mathcal{F}(\D)$ and $\mathcal{F}(\D,E)$ are $\varepsilon$-compatible by 
\prettyref{prop:co_top_isomorphism} and the generator is a strong, consistent family for $(\mathcal{F},E)$ 
(see e.g.\ \cite[Theorem 4.5, p.\ 368]{kruse2018_1}). 
Due to \prettyref{prop:Bloch_Banach} $\mathcal{F}\nu_{\ast}(\D)=\mathcal{B}\nu(\D)$ is a Banach space
and we deduce from \eqref{eq:Bloch} that $B_{\mathcal{F}\nu_{\ast}(\D)}$ is compact 
in the Montel space $(\mathcal{O}(\D),\tau_{co})$.
We note that the $\varepsilon$-compatibility of $\F$ and $\FE$ 
in combination with the consistency of $(T^{E},T^{\C})$ for $(\mathcal{F},E)$ gives
$\mathcal{F}_{\varepsilon}\nu_{\ast}(\D,E)=\mathcal{F}\nu_{\ast}(\D,E)$ as linear spaces by 
\prettyref{prop:mingle-mangle} c). In addition, $U:=\{0\}\cup\{(1,z)\;|\;z\in U_{\ast}\}$ is a set of uniqueness 
for $(T^{\C},\mathcal{F}\nu_{\ast})$ by the identity theorem, proving our statement by \prettyref{thm:ext_B_unique}.
\end{proof}
\section{Extension of locally bounded functions}
In order to obtain an affirmative answer to \prettyref{que:surj_restr_set_unique} 
for general separating subspaces of $E'$ we have to restrict to a certain class of `thick' sets of uniqueness.

\begin{defn}[{fix the topology}]\label{def:fix_top_1}
Let $\FV$ be a Hausdorff space. $U\subset\omega$ \emph{fixes the topology} in $\FV$ if there is $C>0$ such that 
\[
|f|_{\FV}\leq C \sup_{x\in U}|T^{\K}(f)(x)|\nu(x),\quad f\in \FV .
\]
\end{defn}

In particular, $U$ is a set of uniqueness if it fixes the topology. The present definition of fixing 
the topology is a special case of \cite[4.1 Definition, p.\ 18]{kruse2018_3}. 
Sets that fix the topolgy appear under many different names, e.g.\ dominating, (weakly) sufficient, sampling sets 
(see \cite[p.\ 18--19]{kruse2018_3} and the references therein), and they are related to $\ell\nu(U)$-frames 
used by Bonet et.\ al in \cite{bonet2017}. 
For a set $U$, a function $\nu\colon U\to(0,\infty)$ and an lcHs $E$ we set 
\begin{equation}\label{eq:frame}
\ell\nu(U,E):=\{f\colon U\to E\;|\;\forall\;\alpha\in\mathfrak{A}:\;
\|f\|_{\alpha}:=\sup_{x\in U}p_{\alpha}(f(x))\nu(x)<\infty\}.
\end{equation}
If $U$ is countable and fixes the topology in $\FV$, the inclusion $\ell\nu(U)\hookrightarrow (\K^{U},\tau_{co})$ 
is continuous and $\ell\nu(U)$ contains the space of sequences (on $U$) with compact support as a linear subspace, 
then $(T^{\K}_{x})_{x\in U}$ is an $\ell\nu(U)$-frame in the sense of \cite[Definition 2.1, p.\ 3]{bonet2017}. 
The next definition is a special case of \cite[4.2 Definition, p.\ 19]{kruse2018_3}.

\begin{defn}[{$lb$-restriction space}]
Let $\FV$ be a Hausdorff space, $U$ fix the topology in $\FV$ and $G\subset E'$ a separating subspace. We set 
\[
N_{U}(f):=\{f(x)\nu(x)\;|\;x\in U\}
\]
for $f\in\mathcal{FV}_{G}(U,E)$ and 
\begin{align*}
\mathcal{FV}_{G}(U,E)_{lb}:=&\{f\in\mathcal{FV}_{G}(U,E)\;|\;N_{U}(f)\;\text{bounded in}\; E\}\\
=&\mathcal{FV}_{G}(U,E)\cap\ell\nu(U,E).
\end{align*}
\end{defn}

Let us recall the assumptions of \prettyref{rem:R_well-defined} but now $U$ fixes the topology.
Let $(T^{E},T^{\K})$ be a strong, consistent family for $(\mathcal{F},E)$ 
and a generator for $(\mathcal{F}\nu,E)$.
Let $\F$ and $\FE$ be $\varepsilon$-into-compatible and the inclusion $\FV\hookrightarrow\F$ continuous. 
Consider a set $U$ which fixes the topology in $\FV$ 
and a separating subspace $G\subset E'$.
For $u\in \F\varepsilon E$ such that $u(B_{\FV}^{\circ \F'})$ is bounded in $E$
we have $R_{U,G}(f)\in\mathcal{F}\nu_{G}(U,E)$ with $f:=S(u)\in\Feps$ by \eqref{eq:well_def_B_unique}.
Further, $T^{\K}_{x}(\cdot)\nu(x)\in B_{\FV}^{\circ \F'}$ for every $x\in\omega$, 
which implies that 
\[
\sup_{x\in U}p_{\alpha}(R_{U,G}(f)(x))\nu(x)
=\sup_{x\in U}p_{\alpha}\bigl(u(T^{\K}_{x}(\cdot)\nu(x))\bigr)\\
\leq \sup_{y'\in B_{\FV}^{\circ \F'}}p_{\alpha}(u(y'))<\infty
\]
for all $\alpha\in\mathfrak{A}$ by consistency. Hence $R_{U,G}(f)\in\mathcal{F}\nu_{G}(U,E)_{lb}$.
Therefore the injective linear map
\[
R_{U,G}\colon \Feps \to \mathcal{F}\nu_{G}(U,E)_{lb},\;f\mapsto (T^{E}(f)(x))_{x\in U},
\]
is well-defined and the question we want to answer is:

\begin{que}
Let the assumptions of \prettyref{rem:R_well-defined} be fulfilled and $U$ fix the topology in $\FV$.
When is the injective restriction map 
\[
R_{U,G}\colon \Feps\to \mathcal{FV}_{G}(U,E)_{lb},\;f\mapsto (T^{E}(f)(x))_{x\in U},
\]
surjective?
\end{que}

\begin{prop}[{\cite[Proposition 3.1, p.\ 692]{F/J/W}}]\label{prop:ext_B_fix_top}
Let $E$ be locally complete, $G\subset E'$ a separating subspace and $Z$ a Banach space whose closed unit ball $B_{Z}$ is a 
compact subset of an lcHs $Y$. Let $B_{1}\subset B_{Z}^{\circ Y'}$ such that 
$B_{1}^{\circ Z}:=\{z\in Z\;|\;\forall\;y'\in B_{1}:\;|y'(z)|\leq 1\}$ is bounded in $Z$.  
If $\mathsf{A}\colon X:=\operatorname{span}B_{1}\to E$ is a linear map which is bounded on $B_{1}$ 
such that there is a $\sigma(E',E)$-dense subspace $G\subset E'$ with $e'\circ \mathsf{A}\in Z$ for all $e'\in G$, 
then there exists a (unique) extension $\widehat{\mathsf{A}}\in Y\varepsilon E$ of $\mathsf{A}$ 
such that $\widehat{\mathsf{A}}(B_{Z}^{\circ Y'})$ is bounded in $E$.
\end{prop}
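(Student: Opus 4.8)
The plan is to build the extension $\widehat{\mathsf{A}}$ by dualising $\mathsf{A}$ and then to realise the resulting functionals as genuine elements of $E$ by means of local completeness. First I would set $B:=\oacx\mathsf{A}(B_{1})$, the closed absolutely convex hull in $E$ of the bounded set $\mathsf{A}(B_{1})$. Since $E$ is locally complete, $B$ is a Banach disk, so $E_{B}$ is a Banach space continuously included in $E$ and $p_{B}(e'):=\sup_{b\in B}|e'(b)|$ is just the $(E_{B})'$-norm of the restriction $e'_{\mid E_{B}}$. Recalling that $Y\varepsilon E$ consists of the continuous linear maps $Y'_{\kappa}\to E$, to produce $\widehat{\mathsf{A}}$ it suffices to construct a linear $\widehat{\mathsf{A}}\colon Y'\to E$ which is $\kappa(Y',Y)$-to-$E$ continuous, extends $\mathsf{A}$ on $X$, and whose values are pinned down on $G$ by a relation $e'(\widehat{\mathsf{A}}(y'))=y'(z_{e'})$ for $e'\in G$, $y'\in Y'$; since $G$ is separating, such a prescription determines $\widehat{\mathsf{A}}(y')$ uniquely.

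Next I would analyse the data coming from $G$. For $e'\in G$ the hypothesis gives $z_{e'}\in Z$ with $(e'\circ\mathsf{A})(x)=x(z_{e'})$ for all $x\in X$, pairing $Z\subset Y$ against $Y'\supset X$. This $z_{e'}$ is unique because $B_{1}$ separates $Z$: if $x(z)=0$ for all $x\in B_{1}$ then $z\in tB_{1}^{\circ Z}$ for every $t>0$, and boundedness of $B_{1}^{\circ Z}$ forces $z=0$. Hence $e'\mapsto z_{e'}$ is a well-defined linear map $G\to Z$, and from $B_{1}^{\circ Z}\subset MB_{Z}$ (so that $\|z\|_{Z}\leq M\sup_{x\in B_{1}}|x(z)|$) together with $\mathsf{A}(B_{1})\subset B$ I obtain
\[
\|z_{e'}\|_{Z}\leq M\sup_{x\in B_{1}}|e'(\mathsf{A}(x))|\leq M\,p_{B}(e'),\quad e'\in G.
\]
Consequently, for fixed $y'\in Y'$ the functional $L_{y'}\colon G\to\K$, $L_{y'}(e'):=y'(z_{e'})$, satisfies $|L_{y'}(e')|\leq M\|y'_{\mid Z}\|_{Z'}\,p_{B}(e')$, i.e.\ it is $p_{B}$-continuous on $G$ with a norm controlled by $\|y'_{\mid Z}\|_{Z'}$.

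Finally I would realise each $L_{y'}$ as evaluation at a point $\widehat{\mathsf{A}}(y')\in E_{B}\subset E$ with $\|\widehat{\mathsf{A}}(y')\|_{E_{B}}\leq M\|y'_{\mid Z}\|_{Z'}$, which is the heart of the matter. The $p_{B}$-bound a priori only places $L_{y'}$ in the bidual $(E_{B})''$, so the main obstacle is to push it back into $E$ itself. For this I would exploit that $z_{e'}\in MB_{Z}$ for $e'\in B^{\circ}$, and that $MB_{Z}$ is \emph{compact} in $Y$: since $x(z_{e'})=e'(\mathsf{A}(x))$ depends $\sigma(E',E)$-continuously on $e'$ for each fixed $x\in X$, while $X_{\mid Z}$ is weak$^{\ast}$-dense and separating (by the bipolar theorem applied to the bounded $B_{1}^{\circ Z}$), the topology $\sigma(Z,X_{\mid Z})$ agrees on the compactum $MB_{Z}$ with the one induced by $Y$; evaluation against $y'\in Y'$ is therefore continuous there, whence $L_{y'}$ is $\sigma(E',E)$-continuous on the polar $B^{\circ}$. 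Local completeness of $E$ then yields the representing element $\widehat{\mathsf{A}}(y')\in E$, unique by separation of $G$ and linear in $y'$. Taking $y'=x\in X$ gives $\widehat{\mathsf{A}}(x)=\mathsf{A}(x)$, so $\widehat{\mathsf{A}}$ extends $\mathsf{A}$; for $y'\in B_{Z}^{\circ Y'}$ one has $\|y'_{\mid Z}\|_{Z'}\leq 1$, hence $\widehat{\mathsf{A}}(y')\in MB$ and $\widehat{\mathsf{A}}(B_{Z}^{\circ Y'})$ is bounded in $E$. Membership $\widehat{\mathsf{A}}\in Y\varepsilon E$ follows because $\widehat{\mathsf{A}}(y')$ depends on $y'$ only through $y'_{\mid Z}$, the map $y'\mapsto y'_{\mid Z}$ is $\kappa(Y',Y)$-to-norm continuous precisely because $B_{Z}$ is an absolutely convex compact of $Y$, and $\widehat{\mathsf{A}}\colon(Z',\|\cdot\|_{Z'})\to E_{B}\hookrightarrow E$ is bounded; uniqueness of $\widehat{\mathsf{A}}$ results from the defining relation on $G$ and the separation of $G$.
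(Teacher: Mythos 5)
The paper itself gives no proof of this proposition: it is imported verbatim from \cite[Proposition 3.1, p.\ 692]{F/J/W}, so your attempt has to be measured against that original argument. Your preparatory steps are correct and entirely in its spirit: $B:=\oacx \mathsf{A}(B_{1})$ is a Banach disc by local completeness; $z_{e'}$ is unique because $B_{1}$ separates $Z$ (boundedness of $B_{1}^{\circ Z}$); the estimate $\|z_{e'}\|_{Z}\leq M\,p_{B}(e')$ with $p_{B}(e'):=\sup_{b\in B}|e'(b)|$ holds; and since $MB_{Z}$ is compact in $Y$ and $\sigma(Z,X)$ is a coarser Hausdorff topology on it, the map $e'\mapsto z_{e'}$ is indeed $\sigma(E',E)$-to-$Y$ continuous on $B^{\circ}\cap G$ (note: only on $B^{\circ}\cap G$, not on $B^{\circ}$ as you write --- $L_{y'}$ is not even defined off $G$). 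The concluding factorisation through $y'\mapsto y'_{\mid Z}$ would also be fine \emph{once} the representing vectors exist with the norm bound.

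The genuine gap is the single sentence ``Local completeness of $E$ then yields the representing element $\widehat{\mathsf{A}}(y')\in E$''. What you need there is: if $\ell\colon G\to\K$ is linear with $|\ell|\leq C\,p_{B}$ on $G$ and $\ell$ is $\sigma(E',E)$-continuous on $B^{\circ}\cap G$, then $\ell$ is evaluation at some point of $E$. Local completeness alone does not deliver this for merely separating $G$, and you correctly identified the obstacle ($L_{y'}$ a priori lives in $(E_{B})''$) but then waved it away. The standard mechanism runs: from continuity at $0$ on $B^{\circ}\cap G$ one extracts finite sets $F_{n}\subset E$ and, via a Hahn--Banach splitting of the estimate $|\ell(e')|\leq \tfrac1n p_{B}(e')+c_{n}\sup_{x\in F_{n}}|e'(x)|$, points $x_{n}\in\operatorname{span}F_{n}$ with $|\ell(e')-e'(x_{n})|\leq\tfrac1n p_{B}(e')$ for all $e'\in G$; then $x_{n}-x_{m}\in(\tfrac1n+\tfrac1m)D$ with $D:=\{x\in E\;|\;|e'(x)|\leq p_{B}(e')\;\forall\,e'\in G\}$, and if $D$ is \emph{bounded} it is a Banach disc by local completeness, $(x_{n})$ converges in $E_{D}$, and you are done. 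But $D$ is the $\sigma(E,G)$-bipolar of $B$, and its boundedness is exactly what ``$G$ determines boundedness'' buys in \prettyref{prop:ext_B_set_uni}; for a merely separating $G$ it can fail, equivalently $B^{\circ}\cap G$ need not be $\sigma(E',E)$-dense in $B^{\circ}$ (duals of infinite-dimensional Banach spaces contain total non-norming subspaces). For the same reason the other standard devices do not close the step: Grothendieck's completeness theorem needs continuity on full equicontinuous sets and only lands in the completion, and Krein--\v{S}mulian in $E_{B}$ is inapplicable because $\ell$ is known only on the possibly non-norming trace of $G$ in $(E_{B})'$ and its continuity is with respect to $\sigma(E',E)$, which is \emph{finer} than the topology $\sigma((E_{B})',E_{B})$ required there. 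So, as written, your proof silently assumes that $G$ determines boundedness, which would collapse the statement to \prettyref{prop:ext_B_set_uni}; the whole point of the present proposition is to trade that assumption for the norming property of $B_{1}$ over $Z$ (boundedness of $B_{1}^{\circ Z}$), and the step where this trade must actually be exploited to push $L_{y'}$ from $(E_{B})''$ back into $E$ is precisely the one you left unproved.
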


The following theorem is a generalisation of \cite[Theorem 3.2, p.\ 693]{F/J/W} and \cite[Theorem 12, p.\ 5]{jorda2013}.

\begin{thm}\label{thm:ext_B_fix_top}
Let $E$ be locally complete, $G\subset E'$ a separating subspace and $\F$ 
and $\FE$ be $\varepsilon$-into-compatible. 
Let $(T^{E},T^{\K})$ be a generator for $(\mathcal{F}\nu,E)$ 
and a strong, consistent family for $(\mathcal{F},E)$, $\FV$ a Banach space
whose closed unit ball $B_{\FV}$ 
is a compact subset of $\F$ and $U$ fix the topology in $\FV$.
Then the restriction map 
\[
 R_{U,G}\colon \Feps \to \mathcal{F}\nu_{G}(U,E)_{lb} 
\]
is surjective.
\end{thm}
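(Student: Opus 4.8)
The plan is to run the argument of \prettyref{thm:ext_B_unique} almost verbatim, but feeding the operator extension result \prettyref{prop:ext_B_fix_top} in place of \prettyref{prop:ext_B_set_uni}. Given $f\in\mathcal{F}\nu_{G}(U,E)_{lb}$, I would set $Y:=\F$, $Z:=\FV$, and take
\[
B_{1}:=\{T^{\K}_{x}(\cdot)\nu(x)\;|\;x\in U\},\qquad X:=\operatorname{span}B_{1}.
\]
Since $B_{\FV}$ is compact in $\F$, the inclusion $\FV\hookrightarrow\F$ is continuous, so $B_{Z}=B_{\FV}$ is a compact subset of $Y$ and $y'_{\mid\FV}\in\FV'$ for every $y'\in\F'$. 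From the bipolar computation in \prettyref{prop:mingle-mangle} b) one has $T^{\K}_{x}(\cdot)\nu(x)\in B_{\FV}^{\circ\F'}=B_{Z}^{\circ Y'}$ for each $x\in\omega$, hence $B_{1}\subset B_{Z}^{\circ Y'}$.

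Next I would verify the two defining hypotheses of \prettyref{prop:ext_B_fix_top}. The polar $B_{1}^{\circ Z}=\{h\in\FV\;|\;\sup_{x\in U}|T^{\K}(h)(x)|\nu(x)\leq 1\}$ is bounded in $Z=\FV$ precisely because $U$ fixes the topology: \prettyref{def:fix_top_1} provides $C>0$ with $|h|_{\FV}\leq C\sup_{x\in U}|T^{\K}(h)(x)|\nu(x)$, so $B_{1}^{\circ Z}\subset C\,B_{\FV}$. I then define $\mathsf{A}\colon X\to E$ on generators by $\mathsf{A}(T^{\K}_{x}(\cdot)\nu(x)):=f(x)\nu(x)$. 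Well-definedness follows as in \prettyref{thm:ext_B_unique}: if a finite combination $\sum_{i}\lambda_{i}T^{\K}_{x_{i}}(\cdot)\nu(x_{i})$ vanishes on $\F$, then testing against each $f_{e'}\in\FV\subset\F$ and using $(e'\circ f)(x)=T^{\K}(f_{e'})(x)$ together with the separation of $G$ forces $\sum_{i}\lambda_{i}f(x_{i})\nu(x_{i})=0$ in $E$. Boundedness of $\mathsf{A}$ on $B_{1}$ is exactly boundedness of $N_{U}(f)=\{f(x)\nu(x)\;|\;x\in U\}$ in $E$, which holds since $f\in\mathcal{F}\nu_{G}(U,E)_{lb}$. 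Finally, for $e'\in G$ the functional $e'\circ\mathsf{A}$ on $X$ agrees with evaluation against $f_{e'}$, because on a generator both sides equal $(e'\circ f)(x)\nu(x)$; hence $e'\circ\mathsf{A}\in Z=\FV$.

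With all hypotheses in place, \prettyref{prop:ext_B_fix_top} yields a unique $\widehat{\mathsf{A}}\in\F\varepsilon E$ extending $\mathsf{A}$ with $\widehat{\mathsf{A}}(B_{\FV}^{\circ\F'})$ bounded in $E$, so that $F:=S(\widehat{\mathsf{A}})\in\Feps$. Dividing the generator relation by $\nu(x)>0$ gives $\widehat{\mathsf{A}}(T^{\K}_{x})=f(x)$, and consistency of $(T^{E},T^{\K})$ for $(\mathcal{F},E)$ then delivers
\[
T^{E}(F)(x)=T^{E}S(\widehat{\mathsf{A}})(x)=\widehat{\mathsf{A}}(T^{\K}_{x})=f(x),\qquad x\in U,
\]
i.e.\ $R_{U,G}(F)=f$, proving surjectivity.

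I expect the only genuine point of friction to be the correct matching of the data to \prettyref{prop:ext_B_fix_top}: recognising that \emph{fixing the topology} is exactly the assertion that the polar $B_{1}^{\circ Z}$ is bounded in $Z$ (this replaces the $\sigma(X,Z)$-density coming from a set of uniqueness in \prettyref{thm:ext_B_unique}), and that the separating subspace $G$ is now used in two distinct roles, namely for the well-definedness of $\mathsf{A}$ and to supply the witnesses $f_{e'}$ establishing $e'\circ\mathsf{A}\in Z$. Everything else is bookkeeping identical to the uniqueness case.
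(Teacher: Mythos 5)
Your proposal is correct and matches the paper's proof essentially verbatim: the same data $B_{1}=\{T^{\K}_{x}(\cdot)\nu(x)\;|\;x\in U\}$, $X=\operatorname{span}B_{1}$, $Y=\F$, $Z=\FV$, the same map $\mathsf{A}(T^{\K}_{x}(\cdot)\nu(x)):=f(x)\nu(x)$ with identical checks (well-definedness via the separating $G$, boundedness on $B_{1}$ via $\mathsf{A}(B_{1})=N_{U}(f)$, boundedness of $B_{1}^{\circ Z}$ via fixing the topology, and $e'\circ\mathsf{A}\in Z$ via the witnesses $f_{e'}$), followed by the same application of \prettyref{prop:ext_B_fix_top} and the consistency computation $T^{E}(S(\widehat{\mathsf{A}}))(x)=\widehat{\mathsf{A}}(T^{\K}_{x})=f(x)$. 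Your only deviations are cosmetic: you spell out the well-definedness of $\mathsf{A}$ on finite combinations, which the paper compresses into the $\sigma(E',E)$-density of $G$, and you obtain $B_{1}\subset B_{Z}^{\circ Y'}$ from the polar description in \prettyref{prop:mingle-mangle} b) where the paper verifies the same inclusion by a direct estimate.
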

\begin{proof}
Let $f\in \mathcal{F}\nu_{G}(U,E)_{lb}$. 
We set $B_{1}:=\{T^{\K}_{x}(\cdot)\nu(x)\;|\;x\in U\}$, 
$X:=\operatorname{span}B_{1}$, $Y:=\F$ and $Z:=\FV$. 
We have $B_{1}\subset Y'$ since $(T^{E},T^{\K})$ is a consistent family for $(\mathcal{F},E)$. 
If $f\in B_{Z}$, then
\[
|T^{\K}_{x}(f)\nu(x)|\leq |f|_{\FV}\leq 1
\]
for all $x\in U$ and thus $B_{1}\subset B_{Z}^{\circ Y'}$. 
Further on, there is $C>0$ such that for all $f\in B_{1}^{\circ Z}$
\[
|f|_{\FV}\leq C\sup_{x\in U}|T^{\K}_{x}(f)|\nu(x)
\leq C
\]
as $U$ fixes the topology in $Z$, implying the boundedness of $B_{1}^{\circ Z}$ in $Z$.
Let $\mathsf{A}\colon X\to E$ be the linear map determined by 
\[
\mathsf{A}(T^{\K}_{x}(\cdot)\nu(x)):=f(x)\nu(x). 
\]
The map $\mathsf{A}$ is well-defined since $G$ is $\sigma(E',E)$-dense, and 
bounded on $B_{1}$ because $\mathsf{A}(B_{1})=N_{U}(f)$.
Let $e'\in G$ and $f_{e'}$ be the unique element in $\mathcal{F}\nu(\Omega)$ such that 
$T^{\K}(f_{e'})(x)=(e'\circ f)(x)$ for all $x \in U$, which implies 
$T^{\K}(f_{e'})(x)\nu(x)=(e'\circ \mathsf{A})(T^{\K}_{x}(\cdot)\nu(x))$.
Again, this equation allows us to consider $f_{e'}$ as a linear form on $X$ 
(by setting $f_{e'}(T^{\K}_{x}(\cdot)\nu(x)):=(e'\circ \mathsf{A})(T^{\K}_{x}(\cdot)\nu(x))$), 
which yields $e'\circ \mathsf{A}\in\mathcal{F}\nu(\Omega)=Z$ for all $e'\in G$. 
Hence we can apply \prettyref{prop:ext_B_fix_top} and obtain an extension 
$\widehat{\mathsf{A}}\in Y\varepsilon E$ of $\mathsf{A}$ 
such that $\widehat{\mathsf{A}}(B_{Z}^{\circ Y'})$ is bounded in $E$. 
We set $F:=S(\widehat{\mathsf{A}})\in\Feps$ and get for all $x\in U$ that
\[
T^{E}(F)(x)=T^{E}S(\widehat{\mathsf{A}})(x)=\widehat{\mathsf{A}}(T^{\K}_{x})
=\frac{1}{\nu(x)}\mathsf{A}(T^{\K}_{x}(\cdot)\nu(x))=f(x)
\]
by consistency for $(\mathcal{F},E)$, yielding $R_{U,G}(F)=f$.
\end{proof}

\begin{cor}\label{cor:hypo_weighted_ext_fix_top}
Let $E$ be a locally complete lcHs, $G\subset E'$ a separating subspace, $\Omega\subset\R^{d}$ open, 
$P(\partial)^{\K}$ a hypoelliptic linear partial differential operator, $\nu\colon\Omega\to(0,\infty)$ continuous and 
$U$ fix the topology in $\mathcal{C}\nu^{\infty}_{P(\partial)}(\Omega)$.
If $f\colon U\to E$ is a function in $\ell\nu(U,E)$ such that $e'\circ f$ 
admits an extension $f_{e'}\in\mathcal{C}\nu^{\infty}_{P(\partial)}(\Omega)$ 
for every $e'\in G$, then there exists a unique extension $F\in\mathcal{C}\nu^{\infty}_{P(\partial)}(\Omega,E)$ of $f$.
\end{cor}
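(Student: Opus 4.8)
The plan is to mirror the proof of \prettyref{cor:hypo_weighted_ext_unique} almost verbatim, the only structural change being that I invoke \prettyref{thm:ext_B_fix_top} in place of \prettyref{thm:ext_B_unique} and work with the $lb$-restriction space as target. First I would choose the auxiliary spaces $\F:=(\mathcal{C}^{\infty}_{P(\partial)}(\Omega),\tau_{co})$ and $\FE:=(\mathcal{C}^{\infty}_{P(\partial)}(\Omega,E),\tau_{co})$, so that $\FV=\mathcal{C}\nu^{\infty}_{P(\partial)}(\Omega)$ and $\FVE=\mathcal{C}\nu^{\infty}_{P(\partial)}(\Omega,E)$ with generator $(T^{E},T^{\K}):=(\id_{E^{\Omega}},\id_{\K^{\Omega}})$. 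By \prettyref{prop:co_top_isomorphism} the spaces $\F$ and $\FE$ are $\varepsilon$-compatible and $(T^{E},T^{\K})$ is a strong, consistent family for $(\mathcal{F},E)$.

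Next I would verify the structural hypotheses of \prettyref{thm:ext_B_fix_top}, exactly as for the set-of-uniqueness corollary. By \prettyref{prop:hypo_weighted_Banach} the space $\FV$ is a Banach space; the estimate \eqref{eq:hypo_weighted_Banach} shows that $B_{\FV}$ is $\tau_{co}$-bounded, and since $\F$ is a Fr\'echet-Schwartz, hence Montel, space and $B_{\FV}$ is easily seen to be $\tau_{co}$-closed, the closed unit ball $B_{\FV}$ is a compact subset of $\F$. Moreover, the $\varepsilon$-compatibility of $\F$ and $\FE$ together with the consistency of $(\id_{E^{\Omega}},\id_{\K^{\Omega}})$ for $(\mathcal{F},E)$ gives $\Feps=\FVE=\mathcal{C}\nu^{\infty}_{P(\partial)}(\Omega,E)$ as linear spaces by \prettyref{prop:mingle-mangle} c).

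The only step that is genuinely new compared with \prettyref{cor:hypo_weighted_ext_unique} is checking that $f$ lies in the target space $\mathcal{F}\nu_{G}(U,E)_{lb}=\mathcal{F}\nu_{G}(U,E)\cap\ell\nu(U,E)$ of \prettyref{thm:ext_B_fix_top}, and this is precisely where the two standing hypotheses split their roles. The assumption that $e'\circ f$ extends to some $f_{e'}\in\mathcal{C}\nu^{\infty}_{P(\partial)}(\Omega)=\FV$ for every $e'\in G$, combined with $T^{\K}=\id_{\K^{\Omega}}$, yields $T^{\K}(f_{e'})(x)=f_{e'}(x)=(e'\circ f)(x)$ for all $x\in U$, i.e.\ $f\in\mathcal{F}\nu_{G}(U,E)$; the separate assumption $f\in\ell\nu(U,E)$ is exactly the boundedness of $N_{U}(f)$ that distinguishes the $lb$-restriction space. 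Hence $f\in\mathcal{F}\nu_{G}(U,E)_{lb}$.

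Finally, \prettyref{thm:ext_B_fix_top} yields the surjectivity of $R_{U,G}\colon\Feps\to\mathcal{F}\nu_{G}(U,E)_{lb}$, so there is $F\in\Feps=\mathcal{C}\nu^{\infty}_{P(\partial)}(\Omega,E)$ with $R_{U,G}(F)=f$; since $T^{E}=\id_{E^{\Omega}}$ this reads $F(x)=f(x)$ for all $x\in U$, so $F$ extends $f$. Uniqueness is automatic: because $U$ fixes the topology it is in particular a set of uniqueness (as noted after \prettyref{def:fix_top_1}), so $R_{U,G}$ is injective and any two extensions in $\Feps=\FVE$ agreeing on $U$ coincide. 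I do not expect a real obstacle here, since all the hard analytic input---the Banach and compactness structure, the $\varepsilon$-compatibility, and the identity $\Feps=\FVE$---has already been secured for \prettyref{cor:hypo_weighted_ext_unique}, and the argument is a direct transcription into the fixing-the-topology framework. The one point deserving care is to keep the roles of the two hypotheses straight: the weak-extension condition places $f$ in $\mathcal{F}\nu_{G}(U,E)$, whereas membership in $\ell\nu(U,E)$ supplies the local boundedness needed to land in $\mathcal{F}\nu_{G}(U,E)_{lb}$ and thus to apply \prettyref{thm:ext_B_fix_top}.
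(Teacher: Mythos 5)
Your proposal is correct and follows exactly the paper's route: the paper proves this corollary in one line by observing that $f\in\mathcal{F}\nu_{G}(U,E)_{lb}$ and invoking \prettyref{thm:ext_B_fix_top}, whose hypotheses (Banach structure of $\mathcal{C}\nu^{\infty}_{P(\partial)}(\Omega)$, $\tau_{co}$-compactness of its unit ball, $\varepsilon$-compatibility, strong consistent generator, and $\Feps=\FVE$) were already verified in the proof of \prettyref{cor:hypo_weighted_ext_unique}. Your more detailed write-up, including the split of roles between the weak-extension hypothesis (placing $f$ in $\mathcal{F}\nu_{G}(U,E)$) and the hypothesis $f\in\ell\nu(U,E)$ (boundedness of $N_{U}(f)$), together with uniqueness via injectivity of $R_{U,G}$, matches the intended argument precisely.
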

\begin{proof}
Observing that $f\in\mathcal{F}\nu_{G}(U,E)_{lb}$ with $\mathcal{F}\nu(\Omega)
=\mathcal{C}\nu^{\infty}_{P(\partial)}(\Omega)$, 
our statement follows directly from \prettyref{thm:ext_B_fix_top} 
whose conditions are fulfilled by the proof of \prettyref{cor:hypo_weighted_ext_unique}. 
\end{proof}

Sets that fix the topology in $\mathcal{C}\nu^{\infty}_{P(\partial)}(\Omega)$ for different weights $\nu$ 
are well-studied if $P(\partial)=\overline{\partial}$ is the Cauchy--Riemann operator.
If $\Omega\subset\C$ is open, $P(\partial)=\overline{\partial}$ and $\nu=1$, 
then $\mathcal{C}\nu^{\infty}_{P(\partial)}(\Omega)=H^{\infty}(\Omega)$ is the space of bounded holomorphic functions 
on $\Omega$. 
Brown, Shields and Zeller characterise the countable discrete sets $U:=(z_{n})_{n\in\N}\subset\Omega$ that 
fix the topology in $H^{\infty}(\Omega)$ 
with $C=1$ and equality in \prettyref{def:fix_top_1} for Jordan domains $\Omega$ in 
\cite[Theorem 3, p.\ 167]{brownshieldszeller1960}.
In particular, they prove for $\Omega=\D$ that a discrete $U=(z_{n})_{n\in\N}$ fixes the topology in 
$H^{\infty}(\D)$ if and only if almost 
every boundary point is a non-tangential limit of a sequence in $U$. 
Bonsall obtains the same characterisation for bounded harmonic functions, i.e.\ $P(\partial)=\Delta$ and 
$\nu=1$, on $\Omega=\D$ by \cite[Theorem 2, p.\ 473]{bonsall1987}. 
An example of such a set $U=(z_{n})_{n\in\N}\subset\D$ is constructed in
\cite[Remark 6, p.\ 172]{brownshieldszeller1960}.
Probably the first example of a countable discrete set $U\subset\D$ that fixes the topology in $H^{\infty}(\D)$ 
is given by Wolff in \cite[p.\ 1327]{wolff1921} (cf.\ \cite[Theorem (Wolff), p.\ 402]{grosse-erdmann2004}). 
In \cite[4.14 Theorem, p.\ 255]{rubelshields1966} Rubel and Shields give a charaterisation of sets $U\subset\Omega$ 
that fix the topology in $H^{\infty}(\Omega)$ by means of bounded complex measures where $\Omega\subset\C$ is open 
and connected. The existence of a countable $U$ fixing the topology in $H^{\infty}(\Omega)$ is shown in 
\cite[4.15 Proposition, p.\ 256]{rubelshields1966}.
In the case of several complex variables the existence of such a countable $U$ is treated by Sibony in 
\cite[Remarques 4 b), p.\ 209]{sibony1975}
and by Massaneda and Thomas in \cite[Theorem 2, p.\ 838]{massaneda2000}.

If $\Omega=\C$ and $P(\partial)=\overline{\partial}$, 
then $\mathcal{C}\nu^{\infty}_{P(\partial)}(\Omega)=:F_{\nu}^{\infty}(\C)$ is 
a generalised $L^{\infty}$-version of the Bargmann--Fock space. 
In the case that $\nu(z)=\exp(-\alpha |z|^{2}/2)$, $z\in\C$, for some $\alpha>0$, Seip and Wallst\'en show 
in \cite[Theorem 2.3, p.\ 93]{seip1992b} that a countable discrete set $U\subset\C$ fixes the topology 
in $F_{\nu}^{\infty}(\C)$ if and only if $U$ contains a uniformly discrete subset $U'$ with lower uniform 
density $D^{-}(U')>\alpha/\pi$ (the proof of sufficiency is given in \cite{seip1992c} and the result was announced 
in \cite[Theorem 1.3, p.\ 324]{seip1992a}). 
A generalisation of this result using lower angular densities is given by Lyubarski{\u{\i}} and Seip 
in \cite[Theorem 2.2, p.\ 162]{lyubarskij1994} to weights of the form $\nu(z)=\exp(-\phi(\arg z)|z|^{2}/2)$, $z\in\C$,
with a $2\pi$-periodic $2$-trigonometrically convex function $\phi$ such that $\phi\in\mathcal{C}^{2}([0,2\pi])$ 
and $\phi(\theta)+(1/4)\phi''(\theta)>0$ for all $\theta\in[0,2\pi]$.
An extension of the results in \cite{seip1992b} to weights of the form $\nu(z)=\exp(-\phi(z))$, $z\in\C$, 
with a subharmonic function $\phi$ such that $\Delta\phi(z)\sim 1$ is given in 
\cite[Theorem 1, p.\ 249]{ortegacerda1998} by Ortega-Cerd{\`a} and Seip. 
Here, $f(x)\sim g(x)$ for two functions $f,g\colon\Omega\to\R$ means 
that there are $C_{1},C_{2}>0$ such that $C_{1}g(x)\leq f(x)\leq C_{2}g(x)$ for all $x\in\Omega$.  
Marco, Massaneda and Ortega-Cerd{\`a} describe sets that fix the topology in $F_{\nu}^{\infty}(\C)$ with 
$\nu(z)=\exp(-\phi(z))$, $z\in\C$, for some subharmonic function $\phi$ whose Laplacian $\Delta\phi$ is a 
doubling measure (see \cite[Definition 5, p.\ 868]{marco2003}), 
e.g.\ $\phi(z)=|z|^{\beta}$ for some $\beta>0$ in \cite[Theorem A, p.\ 865]{marco2003}.
The case of several complex variables is handled by Ortega-Cerd{\`a}, Schuster and Varolin 
in \cite[Theorem 2, p.\ 81]{ortegacerda2006}.

If $\Omega=\D$ and $P(\partial)=\overline{\partial}$, then 
$\mathcal{C}\nu^{\infty}_{P(\partial)}(\Omega)=:A_{\nu}^{\infty}(\D)$ is 
a generalised $L^{\infty}$-version of the weighted Bergman space (and of $H^{\infty}(\D)$). 
For $\nu(z)=(1-|z|^{2})^{n}$, $z\in\D$, for some $n\in\N$, Seip proves that a countable discrete set 
$U\subset\D$ fixes the topology in $A_{\nu}^{\infty}(\D)$ if and only if $U$ contains a uniformly discrete subset 
$U'$ with lower uniform density $D^{-}(U')>n$ by \cite[Theorem 1.1, p.\ 23]{seip1993}, and gives a typical example 
in \cite[p.\ 23]{seip1993}.
Later on, this is extended by Seip in \cite[Theorem 2, p.\ 718]{seip1998} to weights $\nu(z)=\exp(-\phi(z))$, $z\in\D$, 
with a subharmonic function $\phi$ such that $\Delta\phi(z)\sim(1-|z|^{2})^{-2}$, 
e.g.\ $\phi(z)=-\beta\ln(1-|z|^2)$, $z\in\D$, for some $\beta>0$.
Doma\'nski and Lindstr\"{o}m give necessary resp.\ sufficient conditions for fixing the topology 
in $A_{\nu}^{\infty}(\D)$ in the case that $\nu$ is an essential weight on $\D$, i.e.\ there is $C>0$ with
$\nu(z)\leq\widetilde{\nu}(z)\leq C\nu(z)$ for each $z\in\D$ where 
$\widetilde{\nu}(z):=(\sup\{|f(z)|\;|\;f\in B_{A_{\nu}^{\infty}(\D)}\})^{-1}$ is the associated weight. 
In \cite[Theorem 29, p.\ 260]{DomLind2002} they describe necessary resp.\ sufficient conditions for fixing the topology 
if the upper index $U_{\nu}$ is finite (see \cite[p.\ 242]{DomLind2002}), and 
necessary and sufficient conditions in \cite[Corollary 31, p.\ 261]{DomLind2002} 
if $0<L_{\nu}=U_{\nu}<\infty$ holds where $L_{\nu}$ is the lower index (see \cite[p.\ 243]{DomLind2002}), 
which for example can be applied to $\nu(z)=(1-|z|^{2})^{n}(\ln(\tfrac{e}{1-|z|}))^{\beta}$, $z\in\D$, 
for some $n>0$ and $\beta\in\R$. 
The case of simply connected open $\Omega\subset\C$ is considered in \cite[Corollary 32, p.\ 261--262]{DomLind2002}.

Borichev, Dhuez and Kellay treat $A_{\nu}^{\infty}(\D)$ and $F_{\nu}^{\infty}(\C)$ simultaneously.  
Let $\Omega_{R}:=\D$, if $R=1$, and $\Omega_{R}:=\C$ if $R=\infty$. They take $\nu(z)=\exp(-\phi(z))$, $z\in\Omega_{R}$, 
where $\phi\colon[0,R)\to[0,\infty)$ is an increasing function such that $\phi(0) = 0$, 
$\lim_{r\to R}\phi(r)=\infty$, $\phi$ is extended to $\Omega_{R}$ by $\phi(z):=\phi(|z|)$, 
$\phi\in\mathcal{C}^{2}(\Omega_{R})$, and, in addition $\Delta\phi(z)\geq 1$ if $R=\infty$ 
(see \cite[p.\ 564--565]{borichev2007}). 
Then they set $\rho\colon[0,R)\to\R$, $\rho(r):=[\Delta\phi(r)]^{-1/2}$, and suppose that $\rho$
decreases to $0$ near $R$, $\rho'(r)\to 0$, $r\to R$, and either
$(I_{\D})$ the function $r\mapsto\rho(r)(1-r)^{-C}$ increases for some $C\in\R$ and for $r$ close to $1$, resp.\
$(I_{\C})$ the function $r\mapsto\rho(r)r^{C}$ increases for some $C\in\R$ and for large $r$,
or $(II_{\Omega_{R}})$ that $\rho'(r)\ln(1/\rho(r))\to 0$, $r\to R$ (see \cite[p.\ 567--569]{borichev2007}).
Typical examples for $(I_{\D})$ are 
\[
\phi(r)=\ln(\ln(\tfrac{1}{1-r}))\ln(\tfrac{1}{1-r})\quad\text{or}\quad\phi(r)=\tfrac{1}{1-r},
\]
a typical example for $(II_{\D})$ is $\phi(r)=\exp(\tfrac{1}{1-r})$, for $(I_{\C})$
\[
 \phi(r)=r^2\ln(\ln(r))\quad\text{or}\quad\phi(r)=r^p,\;\text{for some}\; p>2,
\]
and a typical example for $(II_{\C})$ is $\phi(r)=\exp(r)$.
Sets that fix the topology in $A_{\nu}^{\infty}(\D)$ are described by densities 
in \cite[Theorem 2.1, p.\ 568]{borichev2007} and sets that fix the topology in $F_{\nu}^{\infty}(\C)$ 
in \cite[Theorem 2.5, p.\ 569]{borichev2007}.

Wolf uses sets that fix the topology in $A_{\nu}^{\infty}(\D)$ for the characterisation of weighted composition 
operators on $A_{\nu}^{\infty}(\D)$ with closed range in \cite[Theorem 1, p.\ 36]{wolf2011} for bounded $\nu$.

\begin{cor}\label{cor:Bloch_ext_fix_top}
Let $E$ be a locally complete lcHs, $G\subset E'$ a separating subspace, $\nu\colon\D\to(0,\infty)$ continuous and 
$U:=\{0\}\cup(\{1\}\times U_{\ast})$ fix the topology in $\mathcal{B}\nu(\D)$ with $U_{\ast}\subset\D$. 
If $f\colon U\to E$ is a function in $\ell\nu_{\ast}(U,E)$ such that there is 
$f_{e'}\in\mathcal{B}\nu(\D)$ for each $e'\in G$ with $f_{e'}(0)=e'(f(0))$ and $f_{e'}'(z)=e'(f(1,z))$ 
for all $z\in U_{\ast}$, then there exists a unique $F\in\mathcal{B}\nu(\D,E)$ with $F(0)=f(0)$ and 
$(\partial_{\C}^{1})^{E}F(z)=f(1,z)$ for all $z\in U_{\ast}$.
\end{cor}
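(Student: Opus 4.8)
The plan is to reproduce verbatim the setup of the proof of Corollary~\ref{cor:Bloch_ext_unique} and then to invoke Theorem~\ref{thm:ext_B_fix_top} in place of Theorem~\ref{thm:ext_B_unique}, exactly as Corollary~\ref{cor:hypo_weighted_ext_fix_top} mirrors Corollary~\ref{cor:hypo_weighted_ext_unique}. Concretely, I would again take $\mathcal{F}(\D):=(\mathcal{O}(\D),\tau_{co})$ and $\mathcal{F}(\D,E):=(\mathcal{O}(\D,E),\tau_{co})$, so that $\mathcal{F}\nu_{\ast}(\D)=\mathcal{B}\nu(\D)$ and $\mathcal{F}\nu_{\ast}(\D,E)=\mathcal{B}\nu(\D,E)$ with the weight $\nu_{\ast}$ and generator $(T^{E},T^{\C})$ described before the corollary.

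Next I would observe that every hypothesis of Theorem~\ref{thm:ext_B_fix_top} other than the fixing-the-topology condition has already been checked in the proof of Corollary~\ref{cor:Bloch_ext_unique}: the spaces $\mathcal{F}(\D)$ and $\mathcal{F}(\D,E)$ are $\varepsilon$-compatible by Proposition~\ref{prop:co_top_isomorphism}, the generator $(T^{E},T^{\C})$ is a strong, consistent family for $(\mathcal{F},E)$, the space $\mathcal{B}\nu(\D)$ is a Banach space by Proposition~\ref{prop:Bloch_Banach}, and its closed unit ball $B_{\mathcal{B}\nu(\D)}$ is compact in the Montel space $(\mathcal{O}(\D),\tau_{co})$ by the estimate~\eqref{eq:Bloch}. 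Moreover, $\varepsilon$-compatibility together with consistency yields $\mathcal{F}_{\varepsilon}\nu_{\ast}(\D,E)=\mathcal{B}\nu(\D,E)$ as linear spaces by Proposition~\ref{prop:mingle-mangle}~c). The only genuinely new ingredient is the fixing-the-topology hypothesis, which is supplied here directly by the assumption on $U=\{0\}\cup(\{1\}\times U_{\ast})$.

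Finally I would identify $f$ as an element of the correct $lb$-restriction space and read off the conclusion. For each $e'\in G$ the scalar extension $f_{e'}\in\mathcal{B}\nu(\D)$ satisfies $T^{\C}(f_{e'})(0)=f_{e'}(0)=e'(f(0))$ and $T^{\C}(f_{e'})(1,z)=f_{e'}'(z)=e'(f(1,z))$ for all $z\in U_{\ast}$, so $f\in\mathcal{F}\nu_{\ast,G}(U,E)$; since $f\in\ell\nu_{\ast}(U,E)$ by hypothesis, in fact $f\in\mathcal{F}\nu_{\ast,G}(U,E)_{lb}$. Theorem~\ref{thm:ext_B_fix_top} then produces $F\in\mathcal{F}_{\varepsilon}\nu_{\ast}(\D,E)=\mathcal{B}\nu(\D,E)$ with $R_{U,G}(F)=f$, which unpacks to $F(0)=f(0)$ and $(\partial_{\C}^{1})^{E}F(z)=f(1,z)$ for all $z\in U_{\ast}$; uniqueness follows from the injectivity of $R_{U,G}$, as a set that fixes the topology is in particular a set of uniqueness. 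Since all the structural work is inherited from the earlier results, I expect no real obstacle here; the only points requiring attention are the bookkeeping that places $f$ in $\mathcal{F}\nu_{\ast,G}(U,E)_{lb}$ and the translation of the abstract identity $R_{U,G}(F)=f$ back into the stated values of $F(0)$ and $(\partial_{\C}^{1})^{E}F$.
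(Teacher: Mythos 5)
Your proposal is correct and follows essentially the same route as the paper: the paper's proof likewise reuses the verifications from the proof of Corollary~\ref{cor:Bloch_ext_unique} (via the pattern of Corollary~\ref{cor:hypo_weighted_ext_fix_top}), notes that the hypothesis $f\in\ell\nu_{\ast}(U,E)$ places $f$ in $\mathcal{F}\nu_{\ast,G}(U,E)_{lb}$, and applies Theorem~\ref{thm:ext_B_fix_top}. Your added bookkeeping (unpacking $R_{U,G}(F)=f$ and deducing uniqueness from injectivity of $R_{U,G}$, since a set fixing the topology is a set of uniqueness) is exactly what the paper leaves implicit.
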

\begin{proof}
As in \prettyref{cor:hypo_weighted_ext_fix_top} but with $\mathcal{F}\nu_{\ast}(\D)=\mathcal{B}\nu(\D)$ 
and \prettyref{cor:Bloch_ext_unique} instead of \prettyref{cor:hypo_weighted_ext_unique}.
\end{proof}

Sets that fix the topology in $\mathcal{B}\nu(\D)$ play an important role in the characterisation of 
composition operators on $\mathcal{B}\nu(\D)$ with closed range. 
Chen and Gauthier give a characterisation in \cite{chengauthier2008} for weights of the form 
$\nu(z)=(1-|z|^{2})^{\alpha}$, $z\in\D$, for some $\alpha\geq 1$. 
We recall the following definitions which are needed to phrase this characterisation.
For a continuous function $\nu\colon\D\to(0,\infty)$ and a non-constant holomorphic function $\phi\colon\D\to\D$ we set 
\[
\tau^{\nu}_{\phi}(z):=\frac{\nu(z)|\phi'(z)|}{\nu(\phi(z))},\;z\in\D,\quad\text{and}\quad
\Omega^{\nu}_{\varepsilon}:=\{z\in\D\;|\;\tau^{\nu}_{\phi}(z)\geq\varepsilon\},\;\varepsilon>0,
\]
and define the \emph{pseudohyperbolic distance}
\[
\rho(z,w):=\Bigl|\frac{z-w}{1-\overline{z}w}\Bigr|,\;z,w\in\D,
\]
(see \cite[p.\ 195-196]{chengauthier2008}). For $0<r<1$ a set $E\subset\D$ is called a \emph{pseudo} $r$\emph{-net} 
if for every $w\in\D$ there is $z\in\D$ with $\rho(z,w)\leq r$ (see \cite[p.\ 198]{chengauthier2008}). 

\begin{thm}[{\cite[Theorem 3.1, p.\ 199, Theorem 4.3, p.\ 202]{chengauthier2008}}]\label{thm:Bloch_sampling}
Let $\phi\colon\D\to\D$ be a non-constant holomorphic function and $\nu(z)=(1-|z|^{2})^{\alpha}$, $z\in\D$, 
for some $\alpha\geq 1$. Then the following statements are equivalent.
\begin{enumerate}
\item[(i)] The composition operator $C_{\phi}\colon\mathcal{B}\nu(\D)\to\mathcal{B}\nu(\D)$, $C_{\phi}(f):=f\circ\phi$, is bounded below 
(i.e.\ has closed range).
\item[(ii)] There is $\varepsilon>0$ such that $\{0\}\cup(\{1\}\times \phi(\Omega^{\nu}_{\varepsilon}))$ fixes the topology in $\mathcal{B}\nu(\D)$.
\item[(iii)] There are $\varepsilon>0$ and $0<r<1$ such that $\phi(\Omega^{\nu}_{\varepsilon})$ is a pseudo $r$-net.
\end{enumerate}
\end{thm}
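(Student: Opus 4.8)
My plan is to establish the cycle (i)$\Rightarrow$(ii)$\Rightarrow$(iii)$\Rightarrow$(i). The computational heart is the chain rule $(C_\phi f)'=(f'\circ\phi)\,\phi'$ together with the identity
\[
|(C_\phi f)'(z)|\nu(z)=|f'(\phi(z))|\,\nu(\phi(z))\,\tau^\nu_\phi(z),\qquad z\in\D,
\]
which re-expresses the seminorm part of $\mathcal B\nu(\D)$ under $C_\phi$ as a supremum weighted by $\tau^\nu_\phi$. I would phrase everything in terms of the seminorm $q(f):=\sup_{z\in\D}|f'(z)|\nu(z)$, so that $|f|_\nu=\max(|f(0)|,q(f))$, and reduce the whole statement to the \emph{seminorm sampling} inequality $q(f)\le c^{-1}\sup_{w\in\phi(\Omega^\nu_\varepsilon)}|f'(w)|\nu(w)$; since $C_\phi$ commutes with adding constants and $q$ ignores them, (i) is equivalent to the purely seminorm bound $q(C_\phi f)\ge c\,q(f)$. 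Two elementary facts are used repeatedly: the weak estimate $|f'(w)|\nu(w)\le q(f)$, and the bound $M_0:=\sup_z\tau^\nu_\phi(z)<\infty$, which is automatic for $\alpha=1$ by the Schwarz--Pick lemma and is the standing boundedness hypothesis on $C_\phi$ for $\alpha>1$.

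For (i)$\Rightarrow$(ii) I would split the defining supremum of $q(C_\phi f)$ over $\Omega^\nu_\varepsilon$ and its complement. On the complement $\tau^\nu_\phi<\varepsilon$, so that part is $<\varepsilon\,q(f)$, while on $\Omega^\nu_\varepsilon$ the identity above bounds it by $M_0\sup_{w\in\phi(\Omega^\nu_\varepsilon)}|f'(w)|\nu(w)$. Choosing $\varepsilon$ strictly below the lower bound constant $c$ forces the supremum to be realised on $\Omega^\nu_\varepsilon$, which delivers seminorm sampling; adjoining the evaluation at $0$ turns this into the inequality $|f|_\nu\le C\max(|f(0)|,\sup_{w\in\phi(\Omega^\nu_\varepsilon)}|f'(w)|\nu(w))$ defining (ii).

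For (ii)$\Rightarrow$(iii) I would argue by contraposition: if $\phi(\Omega^\nu_\varepsilon)$ is a pseudo $r$-net for no $0<r<1$, there are $w_n\in\D$ with $\rho(w_n,\phi(\Omega^\nu_\varepsilon))\to1$, and I test (ii) against the explicit functions $h_{w_n}$ given by $h_a'(w)=(1-|a|^2)^\alpha(1-\overline{a}w)^{-2\alpha}$ and $h_a(0)=0$. The Möbius identity $1-|\varphi_a(w)|^2=(1-|a|^2)(1-|w|^2)|1-\overline{a}w|^{-2}$ gives the clean formula $|h_a'(w)|\nu(w)=(1-\rho(w,a)^2)^\alpha$; hence $q(h_{w_n})=1$ and $h_{w_n}(0)=0$, while $\sup_{w\in\phi(\Omega^\nu_\varepsilon)}|h_{w_n}'(w)|\nu(w)\le(1-\rho(w_n,\phi(\Omega^\nu_\varepsilon))^2)^\alpha\to0$, contradicting the fixing-the-topology inequality.

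For (iii)$\Rightarrow$(i), by the reduction above it suffices to prove seminorm sampling, and this is where I expect the main obstacle. Assuming it fails, take $f_n$ with $q(f_n)=1$ but $\sup_{w\in\phi(\Omega^\nu_\varepsilon)}|f_n'(w)|\nu(w)\to0$ and near-maximisers $w_n$. If the $w_n$ stay in a compact subset of $\D$ one passes directly to a locally uniform limit $g$ with $q(g)\le1$, $g'\not\equiv0$, and $g'\equiv0$ on the net; if $w_n\to\partial\D$ one must first renormalise by the disc automorphisms $\varphi_{w_n}$ interchanging $0$ and $w_n$, which for $\alpha=1$ is a seminorm isometry but for $\alpha>1$ distorts the seminorm by the factor $\tau^\nu_{\varphi_{w_n}}$ blowing up like $(1-|w_n|^2)^{-(\alpha-1)}$ as $w_n\to\partial\D$, and so requires a quantitative hyperbolic oscillation estimate for $(1-|z|^2)^\alpha|f'(z)|$ to keep the peak under control — this is the technical crux where $\alpha\ge1$ is genuinely used. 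In either case one reaches a nonconstant $g$ with $q(g)\le1$ whose derivative vanishes in every closed pseudohyperbolic ball $\{z:\rho(z,\zeta_0)\le r\}$. The conceptual crux is then to see this is impossible: the growth bound $|g'(z)|\le(1-|z|^2)^{-\alpha}$ forces, via Jensen's formula, the zero counting function of $g'$ to grow only like $\log\tfrac{1}{1-\rho}$, whereas an $r$-dense zero set would need it to grow like $\tfrac{1}{1-\rho}$, the hyperbolic area of $\{|z|\le\rho\}$; hence $g'\equiv0$, a contradiction. Finally, seminorm sampling upgrades to genuine boundedness below because $|f(0)-f(\phi(0))|\le K\,q(f)$ (integrating $f'$ from $\phi(0)$ to $0$, with $K$ depending only on $\phi(0)$ and $\nu$), and $q(f)$ is now controlled by $q(C_\phi f)\le|C_\phi f|_\nu$.
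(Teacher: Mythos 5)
This theorem is not proved in the paper at all: it is quoted from Chen and Gauthier \cite[Theorem 3.1, Theorem 4.3]{chengauthier2008} as background for the extension results, so there is no internal proof to compare against and your attempt must be judged on its own merits. Most of it checks out. The reduction of (i) to the seminorm bound $q(C_{\phi}f)\ge c\,q(f)$ is correct in both directions (apply boundedness below to $f-f(\phi(0))$; conversely integrate $f'$ over $[\phi(0),0]$ to get $|f(0)-f(\phi(0))|\le K q(f)$ with $K=|\phi(0)|/\min_{\xi\in[0,\phi(0)]}\nu(\xi)$). The splitting argument for (i)$\Rightarrow$(ii) with $\varepsilon<c$ is sound, as is (ii)$\Rightarrow$(iii): the identity $|h_{a}'(w)|\nu(w)=(1-\rho(w,a)^{2})^{\alpha}$ gives $q(h_{w_{n}})=1$, $h_{w_{n}}(0)=0$, while the data on $\{0\}\cup(\{1\}\times\phi(\Omega^{\nu}_{\varepsilon}))$ tends to $0$, contradicting fixing the topology. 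The Jensen counting contradiction at the end ($n(\rho)=O(\log\tfrac{1}{1-\rho})$ for the zeros of a function dominated by $(1-|z|^{2})^{-\alpha}$, versus $n(\rho)\gtrsim\tfrac{1}{1-\rho}$ forced by a pseudohyperbolically $r$-dense zero set, via packing disjoint $\rho$-balls) is also correct and does kill the limit function.

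The genuine gap is exactly the point you flag in Case B of (iii)$\Rightarrow$(i), and flagging it is not resolving it. For $\alpha>1$, composing with the automorphisms destroys the compactness you need: with $\tilde f_{n}:=f_{n}\circ\varphi_{w_{n}}$ one has $|\tilde f_{n}'(z)|(1-|z|^{2})^{\alpha}=|f_{n}'(\varphi_{w_{n}}(z))|\nu(\varphi_{w_{n}}(z))\bigl(\tfrac{|1-\overline{w_{n}}z|^{2}}{1-|w_{n}|^{2}}\bigr)^{\alpha-1}$, so the family $(\tilde f_{n}')_{n}$ is not locally uniformly bounded (indeed $|\tilde f_{n}'(0)|\ge\tfrac{1}{2}(1-|w_{n}|^{2})^{1-\alpha}\to\infty$), Montel is unavailable, and no ``oscillation estimate'' for $(1-|z|^{2})^{\alpha}|f'(z)|$ can change that: the fix is a renormalisation of the functions, not an a priori estimate. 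The standard repair is to transport the derivative as an $\alpha$-form rather than compose the primitive: set $H_{n}(z):=f_{n}'(\varphi_{w_{n}}(z))\bigl(\tfrac{1-|w_{n}|^{2}}{(1-\overline{w_{n}}z)^{2}}\bigr)^{\alpha}$. Then $H_{n}$ is holomorphic with $|H_{n}(z)|(1-|z|^{2})^{\alpha}=|f_{n}'(\varphi_{w_{n}}(z))|(1-|\varphi_{w_{n}}(z)|^{2})^{\alpha}$ exactly, hence $\sup_{z}|H_{n}(z)|(1-|z|^{2})^{\alpha}=q(f_{n})=1$, $|H_{n}(0)|\ge\tfrac{1}{2}$, and $|H_{n}|\nu\to0$ on the sets $\varphi_{w_{n}}(\phi(\Omega^{\nu}_{\varepsilon}))$, which are again pseudo $r$-nets by M\"obius invariance of $\rho$; a normal-families limit then produces a nonzero $H$ with $|H(z)|\le(1-|z|^{2})^{-\alpha}$ vanishing in every closed pseudohyperbolic $r$-ball, and your own counting argument finishes. (Even the cruder rescaling $G_{n}:=(1-|w_{n}|^{2})^{\alpha-1}\tilde f_{n}'$ works, at the cost of a factor $4^{\alpha-1}$.) Incidentally, this shows your claim that ``$\alpha\ge1$ is genuinely used'' at that spot is off: the limiting argument works for all $\alpha>0$. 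Where $\alpha\ge1$ really enters is the finiteness of $M_{0}=\sup_{z}\tau^{\nu}_{\phi}(z)$, which you should not treat as a standing hypothesis for $\alpha>1$: it is automatic from Schwarz--Pick combined with $1-|\phi(z)|\ge\tfrac{1}{2}(1-|z|)(1-|\phi(0)|)$.
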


This theorem has some predecessors. The implications $(i)\Rightarrow(iii)$ and $(iii),\;r<1/4\Rightarrow(i)$ 
for $\alpha=1$ are due to Ghatage, Yan and Zheng by \cite[Proposition 1, p.\ 2040]{ghatage2001} 
and \cite[Theorem 2, p.\ 2043]{ghatage2001}. 
This was improved by Chen to $(i)\Leftrightarrow(iii)$ for $\alpha=1$ by removing the restriction $r<1/4$ 
in \cite[Theorem 1, p.\ 840]{chen2003}. 
The proof of the equivalence $(i)\Leftrightarrow(ii)$ given in \cite[Theorem 1, p.\ 1372]{ghatage2005} for $\alpha=1$ 
is due to Ghatage, Zheng and Zorboska. 
A non-trivial example of a sampling set for $\alpha=1$ can be found in \cite[Example 2, p.\ 1376]{ghatage2005} 
(cf.\ \cite[p.\ 203]{chengauthier2008}).
In the case of several complex variables a characterisation corresponding to \prettyref{thm:Bloch_sampling}
is given by Chen in \cite[Theorem 2, p.\ 844]{chen2003} and Deng, Jiang and Ouyang 
in \cite[Theorem 1-3, p.\ 1031--1032, 1034]{deng2007} where $\Omega$ is the unit ball of $\C^{d}$.
Gim{\'e}nez, Malav{\'e} and Ramos-Fern{\'a}ndez extend \prettyref{thm:Bloch_sampling} by 
\cite[Theorem 3, p.\ 112]{gimenez2010} and \cite[Corollary 1, p.\ 113]{gimenez2010} to more general weights 
of the form $\nu(z)=\mu(1-|z|^{2})$ with some continuous function $\mu\colon (0,1]\to(0,\infty)$ 
such that $\mu(r)\to 0$, $r\to 0\vcenter{\hbox{${\scriptstyle{+}}$}}$, 
which can be extended to a holomorphic function $\mu_{0}$ on $\D(1,1):=\{z\in\C\;|\;|z-1|<1\}$ without zeros 
in $\D(1,1)$ and fulfilling $\mu(1-|1-z|)\leq C|\mu_{0}(z)|$ for all $z\in\D(1,1)$ and some $C>0$ 
(see \cite[p.\ 109]{gimenez2010}). 
Examples of such functions $\mu$ are $\mu_{1}(r):=r^{\alpha}$, $\alpha>0$, $\mu_{2}:=r\ln(2/r)$ 
and $\mu_{3}(r):=r^{\beta}\ln(1-r)$, $\beta>1$, for $r\in(0,1]$ (see \cite[p.\ 110]{gimenez2010}) 
and with $\nu(z)=\mu_{1}(1-|z|^{2})=(1-|z|^{2})^{\alpha}$, $z\in\D$, one gets \prettyref{thm:Bloch_sampling} 
back for $\alpha\geq 1$. For $0<\alpha<1$ and $\nu(z)=\mu_{1}(1-|z|^{2})$, $z\in\D$, 
the equivalence $(i)\Leftrightarrow(ii)$ is given in \cite[Proposition 4.4, p.\ 14]{yoneda2018} of Yoneda as well and 
a sufficient condition implying $(ii)$ in \cite[Corollary 4.5, p.\ 15]{yoneda2018}.
Ramos-Fern{\'a}ndez generalises the results given in \cite{gimenez2010} to bounded essential weights $\nu$ on $\D$ by 
\cite[Theorem 4.3, p.\ 85]{fernandez2011} and \cite[Remark 4.2, p.\ 84]{fernandez2011}.
In \cite[Theorem 2.4, p.\ 3106]{pirasteh2018} Pirasteh, Eghbali and Sanatpour use sets that fix the topology 
in $\mathcal{B}\nu(\D)$ for radial essential $\nu$ to characterise Li-Stevi\'{c} integral-type operators 
on $\mathcal{B}\nu(\D)$ with closed range instead of composition operators. 
The composition operator on the harmonic variant of the Bloch type space $\mathcal{B}\nu(\D)$ 
with $\nu(z)=(1-|z|^{2})^{\alpha}$, $z\in\D$, for some $\alpha>0$ is considered by Esmaeili, Estaremi and Ebadian, 
who give a corresponding result in \cite[Theorem 2.8, p.\ 542]{esmaeili2018}.
\section{Extension of sequentially bounded functions}
In this section we restrict to the case that $E$ is a Fr\'echet space.

\begin{defn}[{\cite[Definition 12, p.\ 8]{B/F/J}}]
 Let $E$ be a Fr\'{e}chet space. An increasing sequence $(B_{n})_{n\in\N}$ of bounded subsets of $E_{b}'$ fixes 
 the topology in $E$ if $(B_{n}^{\circ})_{n\in\N}$ is a fundamental system of zero neighbourhoods of $E$.
\end{defn}

In particular, if $E$ is a Banach space, then an \emph{almost norming} set $B\subset E'$, 
i.e.\ $B$ is bounded w.r.t.\ to the operator norm and the polar $B^{\circ}$ is bounded in $E$, 
fixes the topology in $E$ and we refer the reader to 
\cite[Remark 1.2, p.\ 780--781]{Arendt2000} for examples of such sets. We recall the following special case 
of \cite[5.1 Definition, p.\ 27]{kruse2018_3}.

\begin{defn}[{$sb$-restriction space}]
Let $E$ be a Fr\'{e}chet space, $(B_{n})$ fix the topology in $E$ and $G:=\operatorname{span}(\bigcup_{n\in\N} B_{n})$. 
Let $\FV$ be a Hausdorff space, $U$ a set of uniqueness for $(T^{\K},\mathcal{F}\nu)$ and
set
\[
\mathcal{FV}_{G}(U,E)_{sb}:=\{f\in\mathcal{FV}_{G}(U,E)\;|\;\forall\;n\in\N:\; \{f_{e'}\;|\;e'\in B_{n}\}\;
\text{is bounded in}\;\FV\}.
\]
\end{defn}

Let $E$ be a Fr\'{e}chet space, $(B_{n})$ fix the topology in $E$ and recall the assumptions of 
\prettyref{rem:R_well-defined}.
Let $(T^{E},T^{\K})$ be a strong, consistent family for $(\mathcal{F},E)$ and a generator for $(\mathcal{FV},E)$. 
Let $\F$ and $\FE$ be $\varepsilon$-into-compatible and the inclusion $\FV\hookrightarrow\F$ continuous.
Consider a set of uniqueness $U$ for $(T^{\K},\mathcal{F}\nu)$ 
and $G:=\operatorname{span}(\bigcup_{n\in\N} B_{n})\subset E'$.
For $u\in\F\varepsilon E$ such that $u(B_{\FV}^{\circ \F'})$ is bounded in $E$
we have $R_{U,G}(f)\in\mathcal{F}\nu_{G}(U,E)$ with $f:=S(u)\in\Feps$ by \eqref{eq:well_def_B_unique}. 
We note that
\[
 \sup_{e'\in B_{n}}|f_{e'}|_{\FV}
=\sup_{e'\in B_{n}}\sup_{x\in\omega}|e'(T^{E}(f)(x)\nu(x))|
=\sup_{e'\in B_{n}}\sup_{y\in N_{\omega}(f)}|e'(y)|
\]
with the bounded set $N_{\omega}(f):=\{T^{E}(f)(x)\nu(x)\;|\;x\in\omega\}\subset E$, 
implying $R_{U,G}(f)\in\mathcal{FV}_{G}(U,E)_{sb}$. 
Thus the injective linear map
\[
R_{U,G}\colon \Feps \to \mathcal{F}\nu_{G}(U,E)_{sb},\;f\mapsto (T^{E}(f)(x))_{x\in U},
\]
is well-defined. 

\begin{que}
Let the assumptions of \prettyref{rem:R_well-defined} be fulfilled, 
$E$ be a Fr\'{e}chet space, $(B_{n})$ fix the topology in $E$ and $G:=\operatorname{span}(\bigcup_{n\in\N} B_{n})$. 
When is the injective restriction map 
\[
R_{U,G}\colon \Feps \to \mathcal{F}\nu_{G}(U,E)_{sb},\;f\mapsto (T^{E}(f)(x))_{x\in U},
\]
surjective?
\end{que}

Now, we can generalise \cite[Corollary 2.4, p.\ 692]{F/J/W} and \cite[Theorem 11, p.\ 5]{jorda2013}.

\begin{cor}\label{cor:ext_B_unique_seq_bound}
Let $E$ be a Fr\'{e}chet space, $(B_{n})$ fix the topology in $E$, 
set $G:=\operatorname{span}(\bigcup_{n\in\N} B_{n})$ and 
let $\F$ and $\FE$ be $\varepsilon$-into-compatible. 
Let $(T^{E},T^{\K})$ be a generator for $(\mathcal{F}\nu,E)$ and a strong, consistent family for $(\mathcal{F},E)$,
$\FV$ a Banach space whose closed unit ball $B_{\FV}$ is a compact subset of $\F$ 
and $U$ a set of uniqueness for $(T^{\K},\mathcal{F}\nu)$.
Then the restriction map 
\[
 R_{U,G}\colon\Feps \to \mathcal{F}\nu_{G}(U,E)_{sb} 
\]
is surjective.
\end{cor}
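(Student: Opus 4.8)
The plan is to reduce the statement to \prettyref{thm:ext_B_unique} applied with the full dual $E'$ in place of $G$. Indeed, $E$ is locally complete (being Fréchet), $E'$ determines boundedness by Mackey's theorem, and all the remaining hypotheses ($\F$ and $\FE$ $\varepsilon$-into-compatible, $(T^{E},T^{\K})$ a strong, consistent generator, $\FV$ a Banach space with $B_{\FV}$ compact in $\F$, $U$ a set of uniqueness) are exactly those assumed. The one ingredient of \prettyref{thm:ext_B_unique} that is not immediately at hand for $E'$ is that $e'\circ f$ admits an extension in $\FV$ for \emph{every} $e'\in E'$: we are only given such extensions $f_{e'}$ for $e'\in G=\operatorname{span}(\bigcup_{n}B_{n})$. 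Hence the heart of the proof is to upgrade the scalar solvability from $G$ to all of $E'$, and this is where the sequential boundedness built into $\mathcal{F}\nu_{G}(U,E)_{sb}$, together with the compactness of $B_{\FV}$ in $\F$, enters.

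First I would record two preparatory facts. Since $(B_{n})$ fixes the topology in the Fréchet space $E$, the polars $(B_{n}^{\circ})$ form a fundamental system of zero neighbourhoods; consequently every $e'\in E'$ is equicontinuous and, after rescaling, satisfies $e'\in(B_{n}^{\circ})^{\circ}=\oacx^{\,\sigma(E',E)}(B_{n})$ for some $n\in\N$. Secondly, the assignment $e'\mapsto f_{e'}$ is linear on $G$, because for $e'\in G$ the element $f_{e'}\in\FV$ is uniquely determined by $T^{\K}(f_{e'})(x)=(e'\circ f)(x)$ for $x\in U$, as $U$ is a set of uniqueness and $\FV$ is Hausdorff.

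The key step is then the following. Fix $e'\in E'$ and $n$ with $e'\in\oacx^{\,\sigma(E',E)}(B_{n})$, and choose a net $(e'_{\gamma})$ in $\acx(B_{n})\subset G$ with $e'_{\gamma}\to e'$ in $\sigma(E',E)$. By linearity and the $sb$ condition, writing $M_{n}<\infty$ for a bound of $\{f_{b}\;|\;b\in B_{n}\}$ in $\FV$, each $f_{e'_{\gamma}}$ is an absolutely convex combination of the $f_{b}$ and hence lies in $M_{n}B_{\FV}$. Since $B_{\FV}$ is compact in $\F$, the net $(f_{e'_{\gamma}})$ has a subnet converging in $\F$ to some $g\in M_{n}B_{\FV}\subset\FV$. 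For each $x\in U$ the functional $T^{\K}_{x}=\delta_{x}\circ T^{\K}$ is continuous on $\F$ by consistency, so along this subnet $T^{\K}(g)(x)=\lim_{\gamma}T^{\K}(f_{e'_{\gamma}})(x)=\lim_{\gamma}e'_{\gamma}(f(x))=(e'\circ f)(x)$, using $e'_{\gamma}\to e'$ in $\sigma(E',E)$ and $f(x)\in E$. Thus $g$ is an extension of $e'\circ f$ in $\FV$, which proves $f\in\mathcal{F}\nu_{E'}(U,E)$.

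Finally I would invoke \prettyref{thm:ext_B_unique} with $G$ replaced by $E'$ to obtain surjectivity of $R_{U,E'}\colon\Feps\to\mathcal{F}\nu_{E'}(U,E)$, pick $F\in\Feps$ with $R_{U,E'}(F)=f$, hence $T^{E}(F)(x)=f(x)$ for all $x\in U$, so that $R_{U,G}(F)=f$ as well, both restriction maps being given by the same formula $F\mapsto(T^{E}(F)(x))_{x\in U}$. I expect the compactness–limit argument of the preceding paragraph to be the main obstacle: one must ensure that the weak$^{\ast}$-limit functional $e'$ is represented by a genuine element $g$ of the Banach space $\FV$, and not merely of some bidual. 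This is precisely what the compactness of $B_{\FV}$ in the auxiliary space $\F$ delivers, while the $sb$ hypothesis is exactly what keeps the approximating net inside a fixed compact multiple of $B_{\FV}$; neither alone would suffice.
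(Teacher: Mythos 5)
Your proof is correct, but it takes a genuinely different route from the paper's. The paper does \emph{not} upgrade weak extendability to all of $E'$: it observes that for $(a_{k})_{k\in\N}\in\ell^{1}$ and $(e_{k}')_{k\in\N}\subset B_{n}$ the series $\sum_{k}a_{k}f_{e_{k}'}$ converges absolutely in the Banach space $\FV$ (by the $sb$-bound) and extends $(\sum_{k}a_{k}e_{k}')\circ f$, so that $f\in\mathcal{F}\nu_{E'((B_{n})_{n\in\N})}(U,E)$ for the LB-space $E'((B_{n})_{n\in\N})=\operatorname{ind}_{n\in\N}E'(B_{n})$ of such $\ell^{1}$-combinations; it then invokes \cite[Proposition 7, p.\ 503]{F/J}, which says that this space determines boundedness in $E$, and applies \prettyref{thm:ext_B_unique} with that subspace in place of $G$. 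You instead prove the formally stronger intermediate statement $f\in\mathcal{F}\nu_{E'}(U,E)$: a rescaled $e'\in E'$ lies in $(B_{n}^{\circ})^{\circ}=\oacx(B_{n})$ (closure w.r.t.\ $\sigma(E',E)$) because $(B_{n}^{\circ})_{n\in\N}$ is a base of zero neighbourhoods, you approximate it by a net in $\acx(B_{n})\subset G$, the corresponding extensions stay in $M_{n}B_{\FV}$ by the linearity of $e'\mapsto f_{e'}$ (which correctly rests on uniqueness of extensions, i.e.\ on $U$ being a set of uniqueness and $\FV$ Hausdorff) together with the $sb$-bound, and the compactness of $B_{\FV}$ in $\F$ plus the continuity of the functionals $T^{\K}_{x}\in\F'$ (consistency) let you pass to a limit $g\in M_{n}B_{\FV}\subset\FV$ extending $e'\circ f$; Mackey's theorem then allows you to run \prettyref{thm:ext_B_unique} with $E'$ itself. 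All steps check out, including the point that subnets preserve both limits and that $R_{U,E'}$ and $R_{U,G}$ are given by the same formula. What each approach buys: the paper's proof is shorter modulo the citation and at this stage uses only the completeness of $\FV$ (for the series), while your proof is self-contained — it replaces the external determination-of-boundedness result for $E'((B_{n})_{n\in\N})$ by a direct compactness argument reusing hypotheses already present — and shows as a by-product that in this setting the $sb$-condition automatically yields weak extensions with respect to the whole dual $E'$, slightly more than the paper establishes.
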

\begin{proof}
Let $f\in\mathcal{F}\nu_{G}(U,E)_{sb}$. Then $\{f_{e'}\;|\;e'\in B_{n}\}$ is bounded in $\FV$ for each $n\in\N$. 
We deduce for each $n\in\N$, $(a_{k})_{k\in\N}\in\ell^{1}$ and $(e_{k}')_{k\in\N}\subset B_{n}$ that 
$(\sum_{k\in\N}a_{k}e_{k}')\circ f$ admits the extension $\sum_{k\in\N}a_{k}f_{e_{k}'}$ in $\FV$. 
Due to \cite[Proposition 7, p.\ 503]{F/J} the LB-space 
$E'((B_{n})_{n\in\N}):=\operatorname{ind}_{n\in\N}E'(B_{n})$, where 
\[
E'(B_{n}):=\{\sum_{k\in\N}a_{k}e_{k}'\;|\;(a_{k})_{k\in\N}\in\ell^{1},\,(e_{k}')_{k\in\N}\subset B_{n}\}
\]
is endowed with its Banach space topology for $n\in\N$, determines boundedness in $E$. 
Hence we conclude that $f\in \mathcal{F}\nu_{E'((B_{n})_{n\in\N})}(U,E)$, which yields that there 
is $u\in\F\varepsilon E$ with bounded $u(B_{\FV}^{\circ \F'})\subset E$ 
such that $R_{U,G}(S(u))=f$ by \prettyref{thm:ext_B_unique}.
\end{proof}

As an application we directly obtain the following two corollaries of \prettyref{cor:ext_B_unique_seq_bound} 
since its assumptions are fulfilled by the proof of 
\prettyref{cor:hypo_weighted_ext_unique} and \prettyref{cor:Bloch_ext_unique}, respectively.

\begin{cor}\label{cor:hypo_weighted_ext_unique_seq_bound}
Let $E$ be a Fr\'{e}chet space, $(B_{n})$ fix the topology in $E$ and $G:=\operatorname{span}(\bigcup_{n\in\N} B_{n})$,
$\Omega\subset\R^{d}$ open, $P(\partial)^{\K}$ a hypoelliptic linear partial differential operator, 
$\nu\colon\Omega\to(0,\infty)$ continuous and $U$ a set of uniqueness 
for $(\id_{\K^{\Omega}},\mathcal{C}\nu^{\infty}_{P(\partial)})$.
If $f\colon U\to E$ is a function such that $e'\circ f$ admits an extension 
$f_{e'}\in\mathcal{C}\nu^{\infty}_{P(\partial)}(\Omega)$ for each $e'\in G$ and $\{f_{e'}\;|\;e'\in B_{n}\}$ is 
bounded in $\mathcal{C}\nu^{\infty}_{P(\partial)}(\Omega)$ for each $n\in\N$, 
then there exists a unique extension $F\in\mathcal{C}\nu^{\infty}_{P(\partial)}(\Omega,E)$ of $f$.
\end{cor}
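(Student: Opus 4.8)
The plan is to reduce the assertion to \prettyref{cor:ext_B_unique_seq_bound} by transporting \emph{verbatim} the concrete auxiliary construction from the proof of \prettyref{cor:hypo_weighted_ext_unique}. First I would put $\F:=(\mathcal{C}^{\infty}_{P(\partial)}(\Omega),\tau_{co})$ and $\FE:=(\mathcal{C}^{\infty}_{P(\partial)}(\Omega,E),\tau_{co})$ with generator $(T^{E},T^{\K}):=(\id_{E^{\Omega}},\id_{\K^{\Omega}})$, so that $\FV=\mathcal{C}\nu^{\infty}_{P(\partial)}(\Omega)$ and $\FVE=\mathcal{C}\nu^{\infty}_{P(\partial)}(\Omega,E)$. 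The proof of \prettyref{cor:hypo_weighted_ext_unique} already establishes every structural hypothesis needed here: $\F$ and $\FE$ are $\varepsilon$-compatible and $(T^{E},T^{\K})$ is a strong, consistent family by \prettyref{prop:co_top_isomorphism}; the space $\FV$ is Banach by \prettyref{prop:hypo_weighted_Banach}; and its closed unit ball $B_{\FV}$ is a compact subset of the Fr\'echet--Schwartz, hence Montel, space $\F$. In particular the $\varepsilon$-compatibility together with \prettyref{prop:mingle-mangle} c) gives $\Feps=\FVE$ as linear spaces.

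The second step is to recognise that the hypotheses on $f$ are exactly the defining conditions of the $sb$-restriction space. Since $T^{\K}=\id_{\K^{\Omega}}$, the extensions $f_{e'}\in\mathcal{C}\nu^{\infty}_{P(\partial)}(\Omega)$ with $f_{e'}(x)=(e'\circ f)(x)$ for $x\in U$ witness $f\in\mathcal{F}\nu_{G}(U,E)$, and the assumed boundedness of $\{f_{e'}\;|\;e'\in B_{n}\}$ in $\mathcal{C}\nu^{\infty}_{P(\partial)}(\Omega)$ for each $n\in\N$ is precisely the condition $f\in\mathcal{F}\nu_{G}(U,E)_{sb}$. Applying \prettyref{cor:ext_B_unique_seq_bound} then yields some $F\in\Feps=\FVE$ with $R_{U,G}(F)=f$, i.e.\ an $F\in\mathcal{C}\nu^{\infty}_{P(\partial)}(\Omega,E)$ satisfying $F(x)=f(x)$ for all $x\in U$. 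Uniqueness is immediate from the injectivity of $R_{U,G}$, which holds because $U$ is a set of uniqueness and $G$ is separating.

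All of the analytic weight lies upstream, so I do not expect a genuine obstacle inside this corollary; the work is bookkeeping. The conceptually decisive point — already discharged in \prettyref{cor:ext_B_unique_seq_bound} — is that a sequence $(B_{n})$ fixing the topology in the Fr\'echet space $E$ forces the associated LB-space $E'((B_{n})_{n\in\N})$ to determine boundedness, so that the mere $sb$-boundedness of $f$ can be upgraded to the boundedness hypothesis of \prettyref{thm:ext_B_unique}. The only local care needed is to confirm that the concrete spaces above really satisfy the compactness of $B_{\FV}$ in $\F$ and the identification $\Feps=\FVE$; both are inherited directly from the proof of \prettyref{cor:hypo_weighted_ext_unique} and require no new argument.
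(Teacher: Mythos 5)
Your proposal is correct and coincides with the paper's own (deliberately terse) proof: the paper derives this corollary directly from \prettyref{cor:ext_B_unique_seq_bound}, noting that its hypotheses are fulfilled by the constructions in the proof of \prettyref{cor:hypo_weighted_ext_unique}, which is exactly the reduction you carry out. Your observation that the boundedness assumption on $\{f_{e'}\;|\;e'\in B_{n}\}$ is precisely membership in $\mathcal{F}\nu_{G}(U,E)_{sb}$, together with uniqueness via injectivity of $R_{U,G}$, fills in the same bookkeeping the paper leaves implicit.
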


\begin{cor}\label{cor:Bloch_ext_unique_seq_bound}
Let $E$ be a Fr\'{e}chet space, $(B_{n})$ fix the topology in $E$ and $G:=\operatorname{span}(\bigcup_{n\in\N} B_{n})$, 
$\nu\colon\D\to(0,\infty)$ continuous and $U_{\ast}\subset\D$ have an accumulation point in $\D$. 
If $f\colon \{0\}\cup(\{1\}\times U_{\ast})\to E$ is a function such that there is 
$f_{e'}\in\mathcal{B}\nu(\D)$ for each $e'\in G$ with $f_{e'}(0)=e'(f(0))$ and $f_{e'}'(z)=e'(f(1,z))$ 
for all $z\in U_{\ast}$ and $\{f_{e'}\;|\;e'\in B_{n}\}$ is bounded in $\mathcal{B}\nu(\D)$ for each $n\in\N$, 
then there exists a unique $F\in\mathcal{B}\nu(\D,E)$ with $F(0)=f(0)$ and $(\partial_{\C}^{1})^{E}F(z)=f(1,z)$ 
for all $z\in U_{\ast}$.
\end{cor}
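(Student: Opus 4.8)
The strategy is to recognise this statement as the Bloch-type instance of \prettyref{cor:ext_B_unique_seq_bound}, built on exactly the functional-analytic framework already assembled for \prettyref{cor:Bloch_ext_unique}. First I would fix $\mathcal{F}(\D):=(\mathcal{O}(\D),\tau_{co})$ and $\mathcal{F}(\D,E):=(\mathcal{O}(\D,E),\tau_{co})$ together with the set $\omega:=\{0\}\cup(\{1\}\times\D)$, the weight $\nu_{\ast}$ and the generator $(T^{E},T^{\C})$ introduced before \prettyref{cor:Bloch_ext_unique}, so that $\mathcal{F}\nu_{\ast}(\D)=\mathcal{B}\nu(\D)$ and $\mathcal{F}\nu_{\ast}(\D,E)=\mathcal{B}\nu(\D,E)$, and $U:=\{0\}\cup(\{1\}\times U_{\ast})$. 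With this identification every structural hypothesis of \prettyref{cor:ext_B_unique_seq_bound} is already available from the proof of \prettyref{cor:Bloch_ext_unique}: the $\varepsilon$-compatibility of $\mathcal{F}(\D)$ and $\mathcal{F}(\D,E)$ from \prettyref{prop:co_top_isomorphism}, the fact that $(T^{E},T^{\C})$ is a strong, consistent family for $(\mathcal{F},E)$ and a generator for $(\mathcal{F}\nu_{\ast},E)$, that $\mathcal{B}\nu(\D)$ is a Banach space (\prettyref{prop:Bloch_Banach}) whose closed unit ball is, via \eqref{eq:Bloch}, compact in the Montel space $(\mathcal{O}(\D),\tau_{co})$, and that $U$ is a set of uniqueness for $(T^{\C},\mathcal{F}\nu_{\ast})$ by the identity theorem, since $U_{\ast}$ accumulates in $\D$.

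It then remains only to translate the hypotheses on $f$ into membership in the relevant $sb$-restriction space for $(\mathcal{F}\nu_{\ast},E)$. The existence of $f_{e'}\in\mathcal{B}\nu(\D)$ with $f_{e'}(0)=e'(f(0))$ and $f_{e'}'(z)=e'(f(1,z))$ for all $z\in U_{\ast}$ and each $e'\in G$ means precisely that $T^{\C}(f_{e'})=e'\circ f$ on $U$, i.e.\ $f$ lies in the restriction space $\mathcal{F}\nu_{\ast,G}(U,E)$; the additional assumption that $\{f_{e'}\mid e'\in B_{n}\}$ is bounded in $\mathcal{B}\nu(\D)$ for every $n\in\N$ is exactly the defining sequential-boundedness condition, so $f$ belongs to the $sb$-restriction space.

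Applying \prettyref{cor:ext_B_unique_seq_bound} then yields an element $F$ of $\mathcal{F}_{\varepsilon}\nu_{\ast}(\D,E)$ with $T^{E}(F)(x)=f(x)$ for all $x\in U$; since $\mathcal{F}(\D)$ and $\mathcal{F}(\D,E)$ are $\varepsilon$-compatible, \prettyref{prop:mingle-mangle} c) identifies $\mathcal{F}_{\varepsilon}\nu_{\ast}(\D,E)$ with $\mathcal{B}\nu(\D,E)$, so $F\in\mathcal{B}\nu(\D,E)$ satisfies $F(0)=f(0)$ and $(\partial_{\C}^{1})^{E}F(z)=f(1,z)$ for all $z\in U_{\ast}$. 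Uniqueness follows from the injectivity of $R_{U,G}$, which holds because $U$ is a set of uniqueness and $G=\operatorname{span}(\bigcup_{n}B_{n})$ is separating (as $(B_{n})$ fixing the topology forces $G$ to be weak$^{\ast}$-dense). I expect no genuine difficulty here: the entire substance has been delegated to \prettyref{cor:ext_B_unique_seq_bound} and to the verifications of \prettyref{cor:Bloch_ext_unique}, and the only points to watch are the bookkeeping identification $\mathcal{F}\nu_{\ast}(\D,E)=\mathcal{B}\nu(\D,E)$ and confirming that the separating property of $G$ together with the identity-theorem argument indeed deliver the hypotheses they are supposed to.
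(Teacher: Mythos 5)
Your proposal is correct and matches the paper's own treatment: the paper proves this corollary (together with \prettyref{cor:hypo_weighted_ext_unique_seq_bound}) by the one-line observation that it follows directly from \prettyref{cor:ext_B_unique_seq_bound}, whose hypotheses were already verified in the proof of \prettyref{cor:Bloch_ext_unique}. You merely spell out that reduction in more detail, including the translation of the hypotheses on $f$ into membership in the $sb$-restriction space and the uniqueness via injectivity of $R_{U,G}$, all of which agrees with the paper's framework.
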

\section{Weak-strong principles for differentiable functions of finite order}
This section is dedicated to $\mathcal{C}^{k}$-weak-strong principles for differentiable functions. So the question is:

\begin{que}
Let $E$ be an lcHs, $G\subset E'$ a separating subspace, $\Omega\subset\R^{d}$ open 
and $k\in\N_{0}\cup\{\infty\}$. If $f\colon\Omega\to E$ is such that $e'\circ f\in\mathcal{C}^{k}(\Omega)$ 
for each $e'\in G$, does $f\in\mathcal{C}^{k}(\Omega,E)$ hold?
\end{que}

An affirmative answer to the preceding question is called a $\mathcal{C}^{k}$-weak-strong principle. 
It is a result of Bierstedt \cite[2.10 Lemma, p.\ 140]{B2} that for $k=0$ the $\mathcal{C}^{0}$-weak-strong 
principle holds if $\Omega\subset\R^{d}$ is open (or more general a $k_{\R}$-space), $G=E'$ and 
$E$ is such that every bounded set is already precompact in $E$. For instance, the last condition is fulfilled if $E$ is 
a semi-Montel or Schwartz space. The $\mathcal{C}^{0}$-weak-strong principle does not hold for general $E$ by 
\cite[Beispiel, p.\ 232]{Kaballo}.

Grothendieck sketches in a footnote \cite[p.\ 39]{Grothendieck1953} 
(cf.\ \cite[Chap.\ 3, Sect.\ 8, Corollary 1, p.\ 134]{gr73}) 
the proof that for $k<\infty$ a weakly-$\mathcal{C}^{k+1}$ function $f\colon\Omega\to E$ on an open set 
$\Omega\subset\R^{d}$ with values in a quasi-complete lcHs $E$ is already $\mathcal{C}^{k}$, i.e.\ that from 
$e'\circ f\in\mathcal{C}^{k+1}(\Omega)$ for all $e'\in E'$ it follows $f\in\mathcal{C}^{k}(\Omega,E)$. 
A detailed proof of this statement is given by Schwartz in \cite{Schwartz1955}, simultaneously weakening 
the condition from quasi-completeness of $E$ to sequential completeness and from 
weakly-$\mathcal{C}^{k+1}$ to weakly-$\mathcal{C}^{k,1}_{loc}$.

\begin{thm}[{\cite[Appendice, Lemme II, Remarques 1$^0$), p.\ 146-147]{Schwartz1955}}]\label{thm:schwartz_weak_strong}
Let $E$ be a sequentially complete lcHs, $\Omega\subset\R^{d}$ open and $k\in\N_{0}$. 
\begin{enumerate}
\item [a)] If $f\colon\Omega\to E$ is such that $e'\circ f\in \mathcal{C}^{k,1}_{loc}(\Omega)$ for all $e'\in E'$, 
then $f\in\mathcal{C}^{k}(\Omega,E)$. 
\item [b)] If $f\colon\Omega\to E$ is such that $e'\circ f\in \mathcal{C}^{k+1}(\Omega)$ for all $e'\in E'$, 
then $f\in\mathcal{C}^{k}(\Omega,E)$. 
\end{enumerate}
\end{thm}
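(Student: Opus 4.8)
The plan is to deduce part~b) from part~a) and to prove part~a) by induction on $k$, the decisive tool throughout being that $E'$ determines boundedness by Mackey's theorem (every $\sigma(E,E')$-bounded subset of $E$ is bounded) together with the sequential completeness of $E$ to extract limits of Cauchy sequences. The reduction is immediate: every $g\in\mathcal{C}^{k+1}(\Omega)$ lies in $\mathcal{C}^{k,1}_{loc}(\Omega)$, since its partial derivatives of order $k$ are $\mathcal{C}^{1}$ and hence locally Lipschitz, so the hypothesis of b) implies that of a).

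For the base case $k=0$ I would show that a weakly locally Lipschitz $f$ is continuous. Fixing $x_{0}\in\Omega$ and a compact neighbourhood $K$, each $e'\circ f$ has a finite Lipschitz constant $L_{e'}$ on $K$, whence $|e'((f(x)-f(x_{0}))/|x-x_{0}|)|\le L_{e'}$ for $x\in K\setminus\{x_{0}\}$. Thus the set $\{(f(x)-f(x_{0}))/|x-x_{0}|\;|\;x\in K\setminus\{x_{0}\}\}$ is $\sigma(E,E')$-bounded, hence bounded in $E$; writing $f(x)-f(x_{0})\in|x-x_{0}|\,B$ with $B$ bounded forces $f(x)\to f(x_{0})$.

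For the inductive step ($k\ge 1$, assuming a) for $k-1$) the goal is the existence in $E$ of each partial derivative $(\partial^{e_{n}})^{E}f$. Fixing $x$ and $n$, I put $\phi(t):=f(x+te_{n})$ and $g_{h}:=(\phi(h)-\phi(0))/h\in E$. Since $k\ge 1$, the scalar derivative $s\mapsto(e'\circ\phi)'(s)$ is locally Lipschitz with some constant $L_{e'}$, so the fundamental theorem of calculus yields the quantitative estimate $|e'(g_{h})-(e'\circ\phi)'(0)|\le\tfrac12 L_{e'}|h|$ for small $h$, and therefore $|e'((g_{h}-g_{h'})/(|h|+|h'|))|\le\tfrac12 L_{e'}$. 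Hence these normalised second differences form a $\sigma(E,E')$-bounded, thus bounded, family, so $g_{h}-g_{h'}\to 0$ in $E$ as $h,h'\to 0$; by sequential completeness $(g_{h})$ converges to some $(\partial^{e_{n}})^{E}f(x)$ with $e'\circ(\partial^{e_{n}})^{E}f=\partial_{n}(e'\circ f)\in\mathcal{C}^{k-1,1}_{loc}(\Omega)$. The derivative $(\partial^{e_{n}})^{E}f$ is then again weakly $\mathcal{C}^{k-1,1}_{loc}$, so the induction hypothesis gives $(\partial^{e_{n}})^{E}f\in\mathcal{C}^{k-1}(\Omega,E)$; all first partials of $f$ thus exist in $E$ and are continuous, whence $f\in\mathcal{C}^{k}(\Omega,E)$.

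The step that needs care is upgrading the merely weak convergence of the difference quotients to convergence in $E$: pointwise the mean value theorem only produces a weakly Cauchy family, and weak Cauchy sequences need not converge even in a sequentially complete space. The device that resolves this is precisely the loss of one order of differentiability, as the Lipschitz bound on the top-order scalar derivative furnishes the $O(|h|)$ control, so after dividing by $|h|+|h'|$ the family becomes weakly and hence genuinely bounded, and boundedness absorbs the vanishing scalar factor to force a strong Cauchy condition. I would also need to verify the routine compatibility points: that the $\sigma(E,E')$-bounds are taken on a fixed compact neighbourhood so the scalar Lipschitz constants are finite, and that existence together with continuity of all first partials indeed yields membership in $\mathcal{C}^{k}(\Omega,E)$ in the sense of the definition of Section~2.
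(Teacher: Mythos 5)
Your proof is correct, and it is essentially a faithful reconstruction of the classical Grothendieck--Schwartz argument: the reduction of b) to a) via the mean value theorem, the induction on $k$, the use of Mackey's theorem to convert the uniform scalar bounds $|e'((g_{h}-g_{h'})/(|h|+|h'|))|\leq\tfrac12 L_{e'}$ into genuine boundedness in $E$, and the absorption of the factor $|h|+|h'|$ to produce a strong Cauchy condition; the identification of the ``loss of one order'' as the mechanism that upgrades weak to strong convergence is exactly the right point, and your compatibility checks (fixed compact neighbourhood for the Lipschitz constants, the recursive definition of $\mathcal{C}^{k}$ from Section 2) close the argument. Note, however, that the paper itself does not prove \prettyref{thm:schwartz_weak_strong} at all --- it is quoted from Schwartz --- and when the paper later strengthens it in \prettyref{cor:ext_B_unique_loc_Hoelder}, it takes a genuinely different route: there the weighted Banach space $\mathcal{C}^{k,\gamma}(\overline{\Omega}_{n})$ is shown to have closed unit ball compact in $\mathcal{C}^{k}(\overline{\Omega}_{n})$, the operator extension result \prettyref{prop:ext_B_set_uni} combined with $\varepsilon$-into-compatibility yields the surjectivity of the restriction map (\prettyref{thm:ext_B_unique}, then \prettyref{cor:ext_B_unique}), and an exhaustion argument localises. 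That functional-analytic route buys three improvements your direct argument does not give as stated: $E$ need only be locally complete rather than sequentially complete, the weak hypotheses need only hold for $e'$ in a subspace $G\subset E'$ determining boundedness rather than all of $E'$, and the conclusion is the stronger $f\in\mathcal{C}^{k,\gamma}_{loc}(\Omega,E)$ rather than $f\in\mathcal{C}^{k}(\Omega,E)$. Your approach, conversely, is elementary and self-contained, avoiding the $\varepsilon$-product apparatus entirely; and it is worth observing that the Cauchy condition you actually derive, $g_{h}-g_{h'}\in(|h|+|h'|)B$ with $B\subset E$ bounded and fixed, is precisely a Mackey--Cauchy condition, so your own proof already goes through under local completeness --- you invoke sequential completeness only because the stated theorem assumes it --- which partially recovers the paper's sharpening, though not the passage from $E'$ to general $G$, for which your appeal to Mackey's theorem would have to be replaced by the definition of $G$ determining boundedness together with an argument (such as \cite[4.10 Proposition]{kruse2018_3}, used in the proof of \prettyref{thm:weak_strong_finite_order}) identifying the weak limit as a limit in $E$.
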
 

Here $\mathcal{C}^{k,1}_{loc}(\Omega)$ denotes the space of functions in $\mathcal{C}^{k}(\Omega)$ 
whose partial derivatives of order $k$ are locally Lipschitz continuous. 
Part b) clearly implies a $\mathcal{C}^{\infty}$-weak-strong principle for 
open $\Omega\subset\R^{d}$, $G=E'$ and sequentially complete $E$. This can be generalised to locally complete $E$.
Waelbroeck has shown in \cite[Proposition 2, p.\ 411]{waelbroeck1967_2} and 
\cite[Definition 1, p.\ 393]{waelbroeck1967_1} 
that the $\mathcal{C}^{\infty}$-weak-strong principle holds if $\Omega$ is a manifold, $G=E'$ 
and $E$ is locally complete.
It is a result of Bonet, Frerick and Jord\'a that the $\mathcal{C}^{\infty}$-weak-strong principle still holds by 
\cite[Theorem 9, p.\ 232]{B/F/J} if $\Omega\subset\R^{d}$ is open, $G\subset E'$ determines boundedness 
and $E$ is locally complete.
Due to \cite[2.14 Theorem, p.\ 20]{kriegl} of Kriegl and Michor an lcHs $E$ is locally complete if and 
only if the $\mathcal{C}^{\infty}$-weak-strong principle holds for $\Omega=\R$ and $G=E'$. 

One of the goals of this section is to improve \prettyref{thm:schwartz_weak_strong}. 
We start with the following definition. For $k\in\N_{0}$ we define the space of $k$-times continuously 
partially differentiable $E$-valued functions on an open set $\Omega\subset\R^{d}$ whose partial derivatives 
up to order $k$ are continuously extendable to the boundary of $\Omega$ by 
\[
 \mathcal{C}^{k}(\overline{\Omega},E):=\{f\in\mathcal{C}^{k}(\Omega,E)\;|\;(\partial^{\beta})^{E}f\;
 \text{cont.\ extendable on}\;\overline{\Omega}\;\text{for all}\;\beta\in\N^{d}_{0},\,|\beta|\leq k\}
\]
which we equip with the system of seminorms given by 
\[
 |f|_{\mathcal{C}^{k}(\overline{\Omega}),\alpha}:=\sup_{\substack{x\in \Omega\\ \beta\in\N^{d}_{0}, |\beta|\leq k}}
 p_{\alpha}((\partial^{\beta})^{E}f(x)), \quad f\in\mathcal{C}^{k}(\overline{\Omega},E),\, \alpha\in\mathfrak{A}.
\]
The space of functions in $\mathcal{C}^{k}(\overline{\Omega},E)$ such 
that all its $k$-th partial derivatives are $\gamma$-H\"older continuous with $0<\gamma\leq 1$ is given by  
\[
\mathcal{C}^{k,\gamma}(\overline{\Omega},E):=
\bigl\{f\in\mathcal{C}^{k}(\overline{\Omega},E)\;|\;\forall\;\alpha\in\mathfrak{A}:\;
|f|_{\mathcal{C}^{k,\gamma}(\overline{\Omega}),\alpha}<\infty\bigr\}
\]
where
\[
|f|_{\mathcal{C}^{k,\gamma}(\overline{\Omega}),\alpha}:=\max\Bigl(|f|_{\mathcal{C}^{k}(\overline{\Omega}),\alpha},
\sup_{\beta\in\N^{d}_{0}, |\beta|=k}|(\partial^{\beta})^{E}f|_{\mathcal{C}^{0,\gamma}(\Omega),\alpha}\Bigr)
\]
with
\[
|f|_{\mathcal{C}^{0,\gamma}(\Omega),\alpha}:=\sup_{\substack{x,y\in\Omega\\x\neq y}}
\frac{p_{\alpha}(f(x)-f(y))}{|x-y|^{\gamma}}.
\]
We set 
\[
\omega_{1}:=\{\beta\in\N_{0}^{d}\;|\;|\beta|\leq k\}\times\Omega\quad\text{and}\quad
\omega_{2}:=\{\beta\in\N_{0}^{d}\;|\;|\beta|=k\}\times (\Omega^{2}\setminus\{(x,x)\;|\;x\in\Omega\})
\]
as well as $\omega:=\omega_{1}\cup\omega_{2}$. We define the operator 
$T^{E}\colon\mathcal{C}^{k}(\Omega,E)\to E^{\omega}$ by
\begin{align*}
T^{E}(f)(\beta,x):=&(\partial^{\beta})^{E}(f)(x) &&,\;(\beta,x)\in\omega_{1},\\
T^{E}(f)(\beta,(x,y)):=&(\partial^{\beta})^{E}(f)(x)-(\partial^{\beta})^{E}(f)(y) &&,\;(\beta,(x,y))\in\omega_{2}.
\end{align*}
and the weight $\nu\colon\omega\to (0,\infty)$ by 
\[
\nu(\beta,x):=1,\;(\beta,x)\in\omega_{1},\quad\text{and}\quad \nu(\beta,(x,y)):=\frac{1}{|x-y|^{\gamma}},
\;(\beta,(x,y))\in\omega_{2}.
\]
By setting $\FE:=\mathcal{C}^{k}(\overline{\Omega},E)$ and observing that
\[
|f|_{\mathcal{C}^{k,\gamma}(\overline{\Omega}),\alpha}=\sup_{x\in\omega}p_{\alpha}(T^{E}(f)(x))\nu(x),
\quad f\in\mathcal{C}^{k,\gamma}(\overline{\Omega},E),\,\alpha\in\mathfrak{A},
\]
we have $\mathcal{F}\nu(\Omega,E)=\mathcal{C}^{k,\gamma}(\overline{\Omega},E)$ with generator $(T^{E},T^{\K})$. 

\begin{cor}\label{cor:ext_B_unique}
Let $E$ be a locally complete lcHs, $G\subset E'$ determine boundedness, $\Omega\subset\R^{d}$ open and bounded, 
$k\in\N_{0}$ and $0<\gamma\leq 1$. In the case $k\geq 1$, assume additionally that $\Omega$ has Lipschitz boundary. 
If $f\colon\Omega\to E$ is such that $e'\circ f\in \mathcal{C}^{k,\gamma}(\overline{\Omega})$ for all $e'\in G$, 
then $f\in\mathcal{C}^{k,\gamma}(\overline{\Omega},E)$. 
\end{cor}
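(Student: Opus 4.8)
The plan is to deduce this weak-strong principle from \prettyref{thm:ext_B_unique}, in the same spirit as \prettyref{cor:hypo_weighted_ext_unique} was obtained, by choosing the auxiliary pair $\F:=(\mathcal{C}^{k}(\Omega),\tau_{\mathcal{C}^{k}})$ and $\FE:=(\mathcal{C}^{k}(\Omega,E),\tau_{\mathcal{C}^{k}})$, while keeping the generator $(T^{E},T^{\K})$ and the weight $\nu$ on $\omega=\omega_{1}\cup\omega_{2}$ introduced just before the statement, so that $\FV=\mathcal{C}^{k,\gamma}(\overline{\Omega})$ and $\FVE=\mathcal{C}^{k,\gamma}(\overline{\Omega},E)$. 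The delicate feature of a weak-strong principle is that we may not yet differentiate the given $f$, so instead of feeding the whole index set $\omega$ into the theorem I would use the ``thin'' set of uniqueness $U:=\{0\}\times\Omega$, where $0\in\N_{0}^{d}$ is the zero multi-index. On this part of $\omega$ one has $T^{E}(g)(0,x)=g(x)$, so the datum $\widetilde{f}\colon U\to E$, $\widetilde{f}(0,x):=f(x)$, is built from $f$ alone and requires no a priori smoothness.

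First I would check that $U$ is a set of uniqueness for $(T^{\K},\mathcal{F}\nu)$: if $g\in\mathcal{C}^{k,\gamma}(\overline{\Omega})$ satisfies $T^{\K}(g)(0,x)=g(x)=0$ for all $x\in\Omega$, then $g=0$. Next I would verify $\widetilde{f}\in\mathcal{F}\nu_{G}(U,E)$: for each $e'\in G$ the function $f_{e'}:=e'\circ f$ lies in $\mathcal{C}^{k,\gamma}(\overline{\Omega})=\FV$ by hypothesis and satisfies $T^{\K}(f_{e'})(0,x)=f_{e'}(x)=(e'\circ\widetilde{f})(0,x)$. The $\varepsilon$-into-compatibility of $\F$ and $\FE$ and the fact that $(T^{E},T^{\K})$ is a strong, consistent family for $(\mathcal{F},E)$ are the $\mathcal{C}^{k}$-analogues of the properties used in \prettyref{cor:hypo_weighted_ext_unique} and \prettyref{cor:Bloch_ext_unique}: consistency amounts to the commutation of $(\partial^{\beta})^{E}$ with $S$, and strength to $\partial^{\beta}(e'\circ h)=e'\circ(\partial^{\beta})^{E}h$ for $e'\in E'$, both being standard in the framework of \cite{kruse2017} (cf.\ \cite{kruse2018_1}); the difference part of $T^{E}$ then follows by linearity. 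That $\FV=\mathcal{C}^{k,\gamma}(\overline{\Omega})$ is a Banach space I would establish by the completeness argument of \prettyref{prop:hypo_weighted_Banach} and \prettyref{prop:Bloch_Banach}, using that the H\"older norm dominates the local $\mathcal{C}^{k}$-seminorms and that $(\mathcal{C}^{k}(\Omega),\tau_{\mathcal{C}^{k}})$ is complete.

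The one genuinely new point---and the main obstacle---is that $B_{\FV}$ is compact in $\F$. In the earlier corollaries $\F$ was a (Fr\'echet-)Schwartz, hence Montel, space, so that a bounded closed set was automatically compact; for finite $k$, however, $(\mathcal{C}^{k}(\Omega),\tau_{\mathcal{C}^{k}})$ is merely Fr\'echet and not Montel, and compactness has to be extracted from the extra H\"older regularity by Arzel\`{a}--Ascoli. Concretely, for $g\in B_{\FV}$ the derivatives $\partial^{\beta}g$, $|\beta|\leq k$, are uniformly bounded by $1$, and on every compact $K\subset\Omega$ they are uniformly equicontinuous: for $|\beta|=k$ directly from the uniform $\gamma$-H\"older bound, and for $|\beta|<k$ because the bound on $\partial^{\beta+e_{n}}g$ makes $\partial^{\beta}g$ Lipschitz on small balls. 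Hence $B_{\FV}$ is relatively compact for $\tau_{\mathcal{C}^{k}}$; letting $g_{n}\to g$ in $\tau_{\mathcal{C}^{k}}$ with $g_{n}\in B_{\FV}$ and passing to the limit in the sup- and H\"older-bounds shows $g\in B_{\FV}$, so $B_{\FV}$ is $\tau_{\mathcal{C}^{k}}$-closed and therefore compact in $\F$ (here the boundedness of $\Omega$ keeps us in the natural H\"older setting on the compact $\overline{\Omega}$).

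With all hypotheses of \prettyref{thm:ext_B_unique} in place, the theorem furnishes $F\in\Feps$ with $R_{U,G}(F)=\widetilde{f}$, that is $T^{E}(F)(0,x)=F(x)=f(x)$ for every $x\in\Omega$, whence $F=f$ as maps $\Omega\to E$. Finally \prettyref{prop:mingle-mangle} c) gives $\Feps\subset\FVE=\mathcal{C}^{k,\gamma}(\overline{\Omega},E)$, so that $f=F\in\mathcal{C}^{k,\gamma}(\overline{\Omega},E)$, which is the assertion; note that only $\varepsilon$-into-compatibility is needed, since the inclusion $\Feps\subset\FVE$ already places $F$ in the desired space.
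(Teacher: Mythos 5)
There is a genuine gap at the final step, and it originates in your choice of the auxiliary pair. With $\FE:=(\mathcal{C}^{k}(\Omega,E),\tau_{\mathcal{C}^{k}})$, \prettyref{def:standard_space} produces $\FVE=\{f\in\mathcal{C}^{k}(\Omega,E)\;|\;|f|_{\mathcal{C}^{k,\gamma}(\overline{\Omega}),\alpha}<\infty\;\forall\,\alpha\}$, and for $k\geq 1$ this space is in general strictly larger than $\mathcal{C}^{k,\gamma}(\overline{\Omega},E)$ as the paper defines it, because finiteness of the seminorms does not encode the continuous extendability to $\overline{\Omega}$ of the derivatives of order $<k$. This is visible already in the scalar case: let $\Omega=\bigcup_{n}\mathbb{B}(x_{n},r_{n})$ be pairwise disjoint balls accumulating only at a point of the boundary and $g:=(-1)^{n}$ on the $n$-th ball; then $g\in\mathcal{C}^{\infty}(\Omega)$, all partial derivatives of order $\geq 1$ vanish, every seminorm $|g|_{\mathcal{C}^{k,\gamma}(\overline{\Omega})}$ is finite for $k\geq 1$, yet $g$ has no continuous extension to $\overline{\Omega}$. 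This breaks your argument on both horns. If $\FV$ is literally the paper's $\mathcal{C}^{k,\gamma}(\overline{\Omega})$, then its unit ball is not $\tau_{\mathcal{C}^{k}}$-closed: the truncations $g_{N}$ of $g$ (set to $0$ on the balls of index $>N$) lie in $B_{\FV}$ and converge in $\tau_{\mathcal{C}^{k}}$ to $g\notin\FV$, since every compact subset of $\Omega$ meets only finitely many balls; so the compactness hypothesis of \prettyref{thm:ext_B_unique} fails and the theorem cannot be invoked. If instead $\FV$ is the finite-seminorm space inside $\mathcal{C}^{k}(\Omega)$ --- for which your Arzel\`a--Ascoli compactness argument is indeed correct --- then the theorem only yields $f\in\mathcal{C}^{k}(\Omega,E)$ with finite H\"older and sup seminorms, and the asserted membership $f\in\mathcal{C}^{k,\gamma}(\overline{\Omega},E)$ is not reached: for $|\beta|=k$ one can still extend $(\partial^{\beta})^{E}f$ continuously to $\overline{\Omega}$ from uniform $\gamma$-H\"older continuity together with local completeness of $E$ (the differences lie in a bounded Banach disk), but for $|\beta|<k$ the bound on the next derivatives gives only Lipschitz estimates along paths inside $\Omega$, which on a rough bounded open set do not control the boundary behaviour, and your proposal supplies no substitute.

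The paper sidesteps exactly this by taking $\F:=\mathcal{C}^{k}(\overline{\Omega})$ and $\FE:=\mathcal{C}^{k}(\overline{\Omega},E)$ with the sup-seminorms on $\Omega$, so that extendability to $\overline{\Omega}$ is built into $\FE$ and is automatically inherited by $F=S(u)\in\Feps\subset\FVE=\mathcal{C}^{k,\gamma}(\overline{\Omega},E)$ via \prettyref{prop:mingle-mangle} c); the $\varepsilon$-into-compatibility and the consistency computations are quoted from \cite[Example 20, p.\ 1529]{kruse2017} (metric ccp being needed only for surjectivity of $S$), and the Banach property of $\mathcal{C}^{k,\gamma}(\overline{\Omega})$ and the compactness of its unit ball in $\mathcal{C}^{k}(\overline{\Omega})$ are cited from \cite[Theorem 9.8, p.\ 110]{driver2004} and \cite[Theorem 14.32, p.\ 232]{driver2004} --- the relevant equicontinuity there lives on the compact set $\overline{\Omega}$, i.e.\ up to the boundary, which your interior $\tau_{\mathcal{C}^{k}}$-Arzel\`a--Ascoli argument cannot see. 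The rest of your skeleton (the set of uniqueness $U=\{0\}\times\Omega$, the verification that $\widetilde{f}\in\mathcal{F}\nu_{G}(U,E)$, the strength and consistency of the generator, and the final identification $F=f$ followed by $\Feps\subset\FVE$) agrees with the paper's proof. To repair your variant you would need an additional argument extending all $(\partial^{\beta})^{E}f$, $|\beta|\leq k$, continuously to $\overline{\Omega}$; from the estimates alone this is available only under extra regularity of $\Omega$ (e.g.\ convexity, which upgrades the bound on the next derivatives to uniform Lipschitz continuity and then local completeness applies), not for arbitrary bounded open $\Omega$.
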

\begin{proof}
We take $\F:=\mathcal{C}^{k}(\overline{\Omega})$ and $\FE:=\mathcal{C}^{k}(\overline{\Omega},E)$ 
and have $\mathcal{F}\nu(\Omega)=\mathcal{C}^{k,\gamma}(\overline{\Omega})$ 
and $\mathcal{F}\nu(\Omega,E)=\mathcal{C}^{k,\gamma}(\overline{\Omega},E)$ with the weight 
$\nu$ and generator $(T^{E},T^{\K})$ for $(\mathcal{F}\nu,E)$ described above.
Due to the proof of \cite[Example 20, p.\ 1529]{kruse2017} and the first part of the proof of 
\cite[Theorem 14, p.\ 1524]{kruse2017} the spaces $\F$ and $\FE$ are 
$\varepsilon$-into-compatible for any lcHs $E$ 
(the condition that $E$ has metric ccp in \cite[Example 20, p.\ 1529]{kruse2017} is only needed for 
$\varepsilon$-compatibility). 
Another consequence of \cite[Example 20, p.\ 1529]{kruse2017} is that 
\[
T^{E}(S(u))(\beta,x)=(\partial^{\beta})^{E}(S(u))(x)=u(\delta_{x}\circ(\partial^{\beta})^{\K})=u(T^{\K}_{\beta,x}),
\quad (\beta,x)\in\omega_{1},
\]
holds for all $u\in\F\varepsilon E$, implying 
\begin{align*}
T^{E}(S(u))(\beta,(x,y))&=T^{E}(S(u))(\beta,x)-T^{E}(S(u))(\beta,y)=u(T^{\K}_{\beta,x})-u(T^{\K}_{\beta,y})\\
&=u(T^{\K}_{\beta,(x,y)}),\quad(\beta,(x,y))\in\omega_{2}.
\end{align*}
Thus $(T^{E},T^{\K})$ is a consistent family for $(\mathcal{F},E)$ and its strength is easily seen.
In addition, $\mathcal{F}\nu(\Omega)=\mathcal{C}^{k,\gamma}(\overline{\Omega})$ is a Banach space 
by \cite[Theorem 9.8, p.\ 110]{driver2004} (cf.\ \cite[1.7 H\"olderstetige Funktionen, p.\ 46]{alt2012}) 
whose closed unit ball is compact in
$\F=\mathcal{C}^{k}(\overline{\Omega})$ by \cite[8.6 Einbettungssatz in H\"older-R\"aumen, p.\ 338]{alt2012}. 
Moreover, the $\varepsilon$-into-compatibility of $\F$ and $\FE$ 
in combination with the consistency of $(T^{E},T^{\K})$ for $(\mathcal{F},E)$ implies 
$\Feps\subset\mathcal{F}\nu(\Omega,E)$ as linear spaces by \prettyref{prop:mingle-mangle} c). 
Hence our statement follows from \prettyref{thm:ext_B_unique}
with the set of uniqueness $U:=\{0\}\times\Omega$ for $(T^{\K},\mathcal{F}\nu)$.
\end{proof}

\begin{rem}
We point out that \prettyref{cor:ext_B_unique} corrects our result \cite[Corollary 5.3, p.\ 16]{kruse2019_3} 
by adding the missing assumption that $\Omega$ should additionally
have Lipschitz boundary in the case $k\geq 1$. This is needed to deduce that the closed unit ball of 
$\mathcal{C}^{k,\gamma}(\overline{\Omega})$ is compact in $\mathcal{C}^{k}(\overline{\Omega})$ 
by \cite[8.6 Einbettungssatz in H\"older-R\"aumen, p.\ 338]{alt2012} (in the notation of \cite{gilbarg_trudinger2001} 
$\Omega$ having Lipschitz boundary means that it is a $\mathcal{C}^{0,1}$ domain, see 
\cite[Lemma 6.36, p.\ 136]{gilbarg_trudinger2001} and the comments below and above this lemma). 
This additional assumption is missing in \cite[Theorem 14.32, p.\ 232]{driver2004}, 
which is our main reference in \cite{kruse2019_3} for the compact embedding, 
but it is needed due to \cite[U8.1 Gegenbeispiel zu Einbettungss\"atzen, p.\ 365]{alt2012} (cf.\ \cite[p.\ 53]{gilbarg_trudinger2001}).  
However, this only affects the result \cite[Corollary 6.3, p.\ 21--22]{kruse2019_3} 
where we have to add this missing assumption as well 
(see \prettyref{cor:Hoelder_Blaschke} for this). The other results of \cite{kruse2019_3} 
derived from \cite[Corollary 5.3, p.\ 16]{kruse2019_3} are not affected by this missing assumption since 
they are all a consequence of \cite[Corollary 5.4, p.\ 17]{kruse2019_3} and \cite[Corollary 6.4, p.\ 22]{kruse2019_3}, 
whose proofs can be adjusted without additional 
assumptions (see \prettyref{cor:ext_B_unique_loc_Hoelder} and \prettyref{cor:loc_Hoelder_Blaschke} for this).
\end{rem}

Next, we use the preceding corollary to generalise the theorem of Grothendieck and Schwartz on weakly 
$\mathcal{C}^{k+1}$-functions.
For $k\in\N_{0}$ and $0<\gamma\leq 1$ we define the space of $k$-times continuously partially differentiable 
$E$-valued functions with locally $\gamma$-H\"older continuous partial derivatives of $k$-th order
on an open set $\Omega\subset\R^{d}$ by
\[
 \mathcal{C}^{k,\gamma}_{loc}(\Omega,E):=\{f\in\mathcal{C}^{k}(\Omega,E)\;|\;
 \forall\;K\subset\Omega\;\text{compact},\,\alpha\in\mathfrak{A}:\;|f|_{K,\alpha}<\infty\}
\]
where 
\[
|f|_{K,\alpha}:=\max\Bigl(|f|_{\mathcal{C}^{k}(K),\alpha},\,\sup_{\beta\in\N^{d}_{0}, |\beta|= k}
|(\partial^{\beta})^{E}f|_{\mathcal{C}^{0,\gamma}(K),\alpha}\Bigr)
\]
with
\[
 |f|_{\mathcal{C}^{k}(K),\alpha}:=\sup_{\substack{x\in K\\ \beta\in\N^{d}_{0}, |\beta|\leq k}}
 p_{\alpha}((\partial^{\beta})^{E}f(x))
\]
and 
\[
|f|_{\mathcal{C}^{0,\gamma}(K),\alpha}:=\sup_{\substack{x,y\in K\\x\neq y}}
\frac{p_{\alpha}(f(x)-f(y))}{|x-y|^{\gamma}}.
\]

Using \prettyref{cor:ext_B_unique}, we are able to improve \prettyref{thm:schwartz_weak_strong} to the following form.  

\begin{cor}\label{cor:ext_B_unique_loc_Hoelder}
Let $E$ be a locally complete lcHs, $G\subset E'$ determine boundedness, $\Omega\subset\R^{d}$ open, 
$k\in\N_{0}$ and $0<\gamma\leq 1$. 
\begin{enumerate}
\item [a)] If $f\colon\Omega\to E$ is such that $e'\circ f\in \mathcal{C}^{k,\gamma}_{loc}(\Omega)$ for all $e'\in G$, 
then $f\in\mathcal{C}^{k,\gamma}_{loc}(\Omega,E)$. 
\item [b)] If $f\colon\Omega\to E$ is such that $e'\circ f\in \mathcal{C}^{k+1}(\Omega)$ for all $e'\in G$, 
then $f\in\mathcal{C}^{k,1}_{loc}(\Omega,E)$. 
\end{enumerate}
\end{cor}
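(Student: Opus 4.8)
The plan is to reduce both assertions to the bounded-domain weak-strong principle \prettyref{cor:ext_B_unique} by a localisation argument, and then to obtain b) from a) by the purely scalar embedding $\mathcal{C}^{k+1}\subset\mathcal{C}^{k,1}_{loc}$. The guiding observation is that membership in $\mathcal{C}^{k,\gamma}_{loc}(\Omega,E)$ is tested only on compact subsets of $\Omega$, and that any compact $K\subset\Omega$ keeps a positive distance $d:=\operatorname{dist}(K,\R^{d}\setminus\Omega)$ from the boundary (with $d=\infty$ when $\Omega=\R^{d}$); this lets small-scale pairs of points in $K$ be joined by straight segments staying inside $\Omega$.

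For part a) I would fix a compact $K\subset\Omega$ and choose a bounded open set $V$ with $K\subset V$ and $\overline{V}\subset\Omega$, e.g.\ $V:=\{x\in\R^{d}\;|\;\operatorname{dist}(x,K)<d/2\}$. For every $e'\in G$ the hypothesis $e'\circ f\in\mathcal{C}^{k,\gamma}_{loc}(\Omega)$ gives $|e'\circ f|_{\overline{V},\alpha}<\infty$, and since $\overline{V}\subset\Omega$ the partial derivatives of $e'\circ f$ up to order $k$ are already continuous on $\overline{V}$; hence $(e'\circ f)_{\mid V}\in\mathcal{C}^{k,\gamma}(\overline{V})$. Thus \prettyref{cor:ext_B_unique} applies on $V$ and yields $f_{\mid V}\in\mathcal{C}^{k,\gamma}(\overline{V},E)$, in particular $f_{\mid V}\in\mathcal{C}^{k}(V,E)$; as such a $V$ surrounds every point of $\Omega$, differentiability being local gives $f\in\mathcal{C}^{k}(\Omega,E)$. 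For the seminorm, $K\subset V$ forces
\[
|f|_{K,\alpha}\leq |f|_{\mathcal{C}^{k,\gamma}(\overline{V}),\alpha}<\infty,\quad\alpha\in\mathfrak{A},
\]
since every supremum defining $|f|_{K,\alpha}$ ranges over a subset of the corresponding one over $V$; as $K$ was arbitrary, $f\in\mathcal{C}^{k,\gamma}_{loc}(\Omega,E)$. Should one wish to invoke \prettyref{cor:ext_B_unique} only on balls, one covers $K$ by finitely many $B(x_{i},d/4)$ with $\overline{B(x_{i},d/2)}\subset\Omega$ and patches the ball-wise Hölder constants by a near/far split: for $|\beta|=k$ and $x,y\in K$ with $|x-y|<d/4$ both points lie in one $B(x_{i},d/2)$, so the quotient is bounded by that ball's constant, while for $|x-y|\geq d/4$ one bounds $p_{\alpha}((\partial^{\beta})^{E}f(x)-(\partial^{\beta})^{E}f(y))$ by $2\sup_{K}p_{\alpha}((\partial^{\beta})^{E}f(\cdot))$ and divides by $(d/4)^{\gamma}$.

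For part b) it suffices to prove the scalar inclusion $\mathcal{C}^{k+1}(\Omega)\subset\mathcal{C}^{k,1}_{loc}(\Omega)$ and then apply a) with $\gamma=1$. Given $h\in\mathcal{C}^{k+1}(\Omega)$ and $|\beta|=k$, the function $g:=\partial^{\beta}h$ is $\mathcal{C}^{1}$, hence $\nabla g$ is bounded on the compact set $K_{d/2}:=\{x\;|\;\operatorname{dist}(x,K)\leq d/2\}\subset\Omega$. For $x,y\in K$ with $|x-y|<d/2$ the segment $[x,y]$ lies in $B(x,d/2)\subset K_{d/2}$, so the mean value theorem gives $|g(x)-g(y)|\leq(\sup_{K_{d/2}}|\nabla g|)\,|x-y|$, whereas for $|x-y|\geq d/2$ the quotient is controlled by $2\sup_{K}|g|/(d/2)$; thus $g$ is Lipschitz on $K$ and $h\in\mathcal{C}^{k,1}_{loc}(\Omega)$. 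Applying this to each $e'\circ f$ and invoking a) with $\gamma=1$ produces $f\in\mathcal{C}^{k,1}_{loc}(\Omega,E)$, sharpening \prettyref{thm:schwartz_weak_strong}.

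The main obstacle is not the extension step itself, which \prettyref{cor:ext_B_unique} handles, but the bookkeeping that turns its bounded-domain output into a genuinely local conclusion: one must guarantee that the $\gamma$-Hölder constants produced on the individual pieces assemble into a single finite constant on each compact $K$. This is exactly where the near/far dichotomy is used, resting on the fact that compact subsets of $\Omega$ are segment-connected at small scales; the same elementary device underlies both the patching in a) and the scalar embedding in b).
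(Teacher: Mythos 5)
Your proof is correct and takes essentially the same route as the paper: part a) by restricting to relatively compact open subsets (the paper uses a fixed exhaustion $(\Omega_{n})_{n\in\N}$, you use a bounded neighbourhood $V$ of each compact $K$) and invoking \prettyref{cor:ext_B_unique}, and part b) by the scalar mean value theorem on small balls plus a compactness covering, then applying a) with $\gamma=1$. Only two trivial points to polish: your explicit choice $V=\{x\;|\;\operatorname{dist}(x,K)<d/2\}$ is unbounded when $\Omega=\R^{d}$ (intersect with a large ball), and for $\K=\C$ the mean value estimate must be applied to real and imaginary parts separately, as the paper does with the constant $C_{d}$.
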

\begin{proof}
Let us start with a). Let $f\colon\Omega\to E$ be such that $e'\circ f\in \mathcal{C}^{k,\gamma}_{loc}(\Omega)$ 
for all $e'\in G$. Let $(\Omega_{n})_{n\in\N}$ be an exhaustion of $\Omega$ with open, relatively compact sets 
$\Omega_{n}\subset\Omega$ with Lipschitz boundaries $\partial\Omega_{n}$ (e.g.\ choose each $\Omega_{n}$ as the interior 
of a finite union of closed axis-parallel cubes, see the proof of \cite[Theorem 1.4, p.\ 7]{stein2005} for the construction) 
that satisfies $\Omega_{n}\subset\Omega_{n+1}$ for all $n\in\N$. 
Then the restriction of $e'\circ f$ to $\Omega_{n}$ is an element of $\mathcal{C}^{k,\gamma}(\overline{\Omega}_{n})$ 
for every $e'\in G$ and $n\in\N$. Due to \prettyref{cor:ext_B_unique} we obtain that 
$f\in\mathcal{C}^{k,\gamma}(\overline{\Omega}_{n},E)$ for every $n\in\N$. 
Thus $f\in\mathcal{C}^{k,\gamma}_{loc}(\Omega,E)$ 
since differentiability is a local property and for each compact $K\subset\Omega$ there is $n\in\N$ 
such that $K\subset \Omega_{n}$.

Let us turn to b), i.e.\ let $f\colon\Omega\to E$ be such that $e'\circ f\in\mathcal{C}^{k+1}(\Omega)$ 
for all $e'\in G$. 
Since $\Omega\subset\R^{d}$ is open, for every $x\in\Omega$ there is $\varepsilon_{x}>0$ such that 
$\overline{\mathbb{B}(x,\varepsilon_{x})}\subset\Omega$ where $\overline{\mathbb{B}(x,\varepsilon_{x})}$ 
is the closure of $\mathbb{B}(x,\varepsilon_{x}):=\{y\in\R^{d}\;|\;|y-x|<\varepsilon_{x}\}$.
For all $e'\in G$, $\beta\in\N_{0}^{d}$ with $|\beta|=k$ 
and $w,y\in\overline{\mathbb{B}(x,\varepsilon_{x})}$, $w\neq y$, it holds that
\[
 \frac{|(\partial^{\beta})^{\K}(e'\circ f)(w)-(\partial^{\beta})^{\K}(e'\circ f)(y)|}{|w-y|}
\leq C_{d}\max_{1\leq n\leq d}
 \max_{z\in\overline{\mathbb{B}(x,\varepsilon_{x})}}|(\partial^{\beta+e_{n}})^{\K}(e'\circ f)(z)|
\]
by the mean value theorem applied to the real and imaginary part 
where $C_{d}:=\sqrt{d}$ if $\K=\R$ and $C_{d}:=2\sqrt{d}$ if $\K=\C$. 
Thus $e'\circ f\in \mathcal{C}^{k,1}_{loc}(\Omega)$ for all $e'\in G$ 
since for each compact set $K\subset\Omega$ there are $m\in\N$ and $x_{i}\in\Omega$, $1\leq i\leq m$, 
such that $K\subset \bigcup_{i=1}^{m}\mathbb{B}(x_{i},\varepsilon_{x_{i}})$.
It follows from part a) that $f\in\mathcal{C}^{k,1}_{loc}(\Omega,E)$. 
\end{proof}

If $\Omega=\R$, $\gamma=1$ and $G=E'$, then part a) of \prettyref{cor:ext_B_unique_loc_Hoelder} is already known 
by \cite[2.3 Corollary, p.\ 15]{kriegl}. 
A `full' $\mathcal{C}^{k}$-weak-strong principle for $k<\infty$, i.e.\ the conditions of part b) 
imply $f\in\mathcal{C}^{k+1}(\Omega,E)$, does not hold in general (see \cite[p.\ 11--12]{kriegl}) 
but it holds if we restrict the class of admissible lcHs $E$.
 
\begin{thm}\label{thm:weak_strong_finite_order}
Let $E$ be a semi-Montel space, $G\subset E'$ determine boundedness, $\Omega\subset\R^{d}$ open and
$k\in\N$. If $f\colon\Omega\to E$ is such that $e'\circ f\in \mathcal{C}^{k}(\Omega)$ for all $e'\in G$, 
then $f\in\mathcal{C}^{k}(\Omega,E)$. 
\end{thm}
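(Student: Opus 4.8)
The plan is to prove the statement by induction on $k$, the engine being an upgrade from $\sigma(E,G)$-convergence to convergence in $E$ that is made possible by the two hypotheses on $E$ and $G$: whenever a subset of $E$ is $\sigma(E,G)$-bounded it is already bounded in $E$ (because $G$ determines boundedness), and a bounded set is relatively compact in $E$ (because $E$ is semi-Montel). Since $G$ determining boundedness also forces $G$ to be separating—so $\sigma(E,G)$ is Hausdorff and the $E$-topology is finer than it—this yields the following principle that I will use repeatedly: if a net in $E$ is $\sigma(E,G)$-convergent and contained in a $\sigma(E,G)$-bounded set, then it already converges in $E$ to its weak limit. Indeed, the set is relatively compact, every subnet has a further subnet converging in $E$, each such $E$-limit must agree with the unique $\sigma(E,G)$-limit, and compactness then forces convergence of the whole net.

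The base case is the $\mathcal{C}^{0}$-statement: if $e'\circ f$ is continuous for every $e'\in G$, then $f$ is continuous. For a sequence $x_{j}\to x_{0}$ in $\Omega$ the values $f(x_{j})$ satisfy $e'(f(x_{j}))\to e'(f(x_{0}))$ for all $e'\in G$, so $(f(x_{j}))_{j}$ is $\sigma(E,G)$-convergent and, being $\sigma(E,G)$-bounded, converges to $f(x_{0})$ in $E$ by the principle above; as $\Omega$ is metrizable, $f$ is continuous. For the inductive step I assume the assertion for $k-1$ and that $e'\circ f\in\mathcal{C}^{k}(\Omega)$ for all $e'\in G$. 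Fixing $x\in\Omega$ and $1\le n\le d$, I consider the difference quotients $D_{h}:=\tfrac{1}{h}(f(x+he_{n})-f(x))$ for small $h\neq0$. For each $e'\in G$ one has $e'(D_{h})=\tfrac{1}{h}\bigl((e'\circ f)(x+he_{n})-(e'\circ f)(x)\bigr)\to\partial^{e_{n}}(e'\circ f)(x)$, so the family $\{D_{h}\}$ is $\sigma(E,G)$-bounded and $\sigma(E,G)$-convergent as $h\to0$. By the principle above $D_{h}$ converges in $E$ to an element which I denote $(\partial^{e_{n}})^{E}f(x)$, and it satisfies $e'\bigl((\partial^{e_{n}})^{E}f(x)\bigr)=\partial^{e_{n}}(e'\circ f)(x)$ for all $e'\in G$.

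It then remains to see that $g_{n}:=(\partial^{e_{n}})^{E}f$ is $\mathcal{C}^{k-1}$. By the displayed identity $e'\circ g_{n}=\partial^{e_{n}}(e'\circ f)\in\mathcal{C}^{k-1}(\Omega)$ for every $e'\in G$, so the induction hypothesis applied to $g_{n}$ gives $g_{n}\in\mathcal{C}^{k-1}(\Omega,E)$ together with $e'\circ(\partial^{\beta})^{E}g_{n}=\partial^{\beta+e_{n}}(e'\circ f)$ for $|\beta|\le k-1$. Since the first partial derivatives $(\partial^{e_{n}})^{E}f$ exist everywhere and are continuous (being $\mathcal{C}^{k-1}$), $f$ is $\mathcal{C}^{1}$, and as all its first partials are $\mathcal{C}^{k-1}$ the recursive definition of $\mathcal{C}^{k}$ yields $f\in\mathcal{C}^{k}(\Omega,E)$; the identities $e'\circ(\partial^{\beta})^{E}f=\partial^{\beta}(e'\circ f)$ for $|\beta|\le k$ follow by composing the ones just obtained, using that $G$ is separating to match mixed derivatives. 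The step I expect to carry the real weight is the existence of $(\partial^{e_{n}})^{E}f(x)$, i.e.\ the weak-to-strong upgrade of the difference quotients: this is exactly where both the semi-Montel property of $E$ and the fact that $G$ determines boundedness are indispensable, since without relative compactness a $\sigma(E,G)$-limit need not be an $E$-limit. Everything else is bookkeeping with the recursive definition of $\mathcal{C}^{k}$. Alternatively, one could obtain $f\in\mathcal{C}^{k-1}(\Omega,E)$ in one stroke from \prettyref{cor:ext_B_unique_loc_Hoelder} a) (applied with order $k-1$ and $\gamma=1$, since $\mathcal{C}^{k}\subset\mathcal{C}^{k-1,1}_{loc}$ and semi-Montel spaces are locally complete) and then recover only the top-order derivative by the $k=1$ instance of the present argument.
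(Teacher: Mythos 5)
Your proof is correct, but it takes a genuinely different route from the paper's. The paper does not induct on $k$: it first invokes \prettyref{cor:ext_B_unique_loc_Hoelder}~b) — and hence the whole extension machinery of Section 3 built on the H\"older spaces $\mathcal{C}^{k,\gamma}(\overline{\Omega})$ — to obtain $f\in\mathcal{C}^{k-1,1}_{loc}(\Omega,E)$ (using that semi-Montel spaces are quasi-complete, hence locally complete), and it is this local Lipschitz estimate that makes the set of difference quotients of the $(k-1)$-st order derivatives bounded in $E$. The weak-to-strong upgrade is then performed only at the top order, citing \cite[4.10 Proposition, p.\ 23]{kruse2018_3} for the fact that $\sigma(E,G)$ and the original topology of $E$ coincide on the compact closure of a bounded set, and the continuity of the top-order derivative is also obtained from this coincidence of topologies on the bounded range, not from an induction hypothesis. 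Your induction replaces all of this: you obtain $\sigma(E,G)$-boundedness of the difference quotients directly from scalarwise convergence (for fixed $x$ and $n$, $h\mapsto e'(D_{h})$ is continuous on the punctured interval and has a limit at $0$, hence is bounded there), you reprove the coincidence-of-topologies fact ad hoc by the subnet/compactness argument, and your base case reproves a Bierstedt-type $\mathcal{C}^{0}$-weak-strong principle for this setting. What the paper's route buys is the stronger intermediate conclusion $f\in\mathcal{C}^{k-1,1}_{loc}(\Omega,E)$ and uniformity with the rest of its framework; what yours buys is a self-contained, elementary proof that needs neither the H\"older extension corollary nor any external citation. Your closing remark correctly identifies the hybrid variant that is closest to the paper.

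One phrasing should be repaired. In the inductive step the net $(D_{h})$ is \emph{not} known to be $\sigma(E,G)$-convergent: a weak limit in $E$ is not available a priori, only convergence of the scalars $e'(D_{h})$ for each $e'\in G$, and your stated principle presupposes an existing weak limit ("each such $E$-limit must agree with the unique $\sigma(E,G)$-limit"). The principle should be stated in the stronger form that your subnet proof in fact establishes: a net contained in a $\sigma(E,G)$-bounded set whose image under every $e'\in G$ converges in $\K$ already converges in $E$ — relative compactness (via determination of boundedness and the semi-Montel property) supplies cluster points, the separating property of $G$ (which, as you note, follows from $G$ determining boundedness) makes the cluster point unique, and compactness upgrades unique clustering to convergence. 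With this restatement the argument is complete.
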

\begin{proof}
Let $f\colon\Omega\to E$ be such that $e'\circ f\in \mathcal{C}^{k}(\Omega)$ for all $e'\in G$. 
Due to \prettyref{cor:ext_B_unique_loc_Hoelder} b) we already know that $f\in\mathcal{C}^{k-1,1}_{loc}(\Omega,E)$ since 
semi-Montel spaces are quasi-complete and thus locally complete. 
Now, let $x\in\Omega$, $\varepsilon_{x}>0$ such that $\overline{\mathbb{B}(x,\varepsilon_{x})}\subset\Omega$, 
$\beta\in\N_{0}^{d}$ with $|\beta|=k-1$ and $n\in\N$ with $1\leq n\leq d$. 
The set
\[
B:=\Bigl\{\frac{(\partial^{\beta}f)^{E}(x+he_{n})-(\partial^{\beta}f)^{E}f(x)}{h}\;|\;
          h\in\R,\,0<h\leq\varepsilon_{x}\Bigr\}
\]
is bounded in $E$ because $f\in\mathcal{C}^{k-1,1}_{loc}(\Omega,E)$. As $E$ is semi-Montel, 
the closure $\overline{B}$ is compact in $E$. 
Let $(h_{m})_{m\in\N}$ be a sequence in $\R$ such that $0<h_{m}\leq\varepsilon_{x}$ for all $m\in\N$. 
From the compactness of $\overline{B}$ we deduce that there is a subnet $(h_{m_{\iota}})_{\iota\in I}$, 
where $I$ is a directed set, of $(h_{m})_{m\in\N}$ and $y_{x}\in\overline{B}$ with 
\[
y_{x}=\lim_{\iota\in I}\frac{(\partial^{\beta}f)^{E}(x+h_{m_{\iota}}e_{n})-(\partial^{\beta}f)^{E}f(x)}{h_{m_{\iota}}}
=:\lim_{\iota\in I}y_{\iota}.
\]
Further, we note that the limit
\begin{equation}\label{eq:weak_limit_1}
 \partial^{\beta+e_{n}}(e'\circ f)(x)
=\lim_{\substack{h\to 0\\ h\in\R, h\neq 0}}
 \frac{\partial^{\beta}(e'\circ f)(x+he_{n})-\partial^{\beta}(e'\circ f)(x)}{h}
\end{equation}
exists for all $e'\in G$ and that $(e'(y_{\iota}))_{\iota\in I}$ is a subnet of the net of difference quotients 
on the right-hand side of \eqref{eq:weak_limit_1} as $\partial^{\beta}(e'\circ f)=e'\circ (\partial^{\beta})^{E} f$.
Therefore 
\begin{align}\label{eq:weak_limit_2}
  \partial^{\beta+e_{n}}(e'\circ f)(x)
&=\lim_{\substack{h\to 0\\ h\in\R, h\neq 0}}
   e'\Bigl(\frac{(\partial^{\beta})^{E}f(x+he_{n})-(\partial^{\beta})^{E}f(x)}{h}\Bigr)\notag\\
&=\lim_{\substack{h\to 0\\ h\in\R, 0<h\leq\varepsilon_{x}}}
   e'\Bigl(\frac{(\partial^{\beta})^{E}f(x+he_{n})-(\partial^{\beta})^{E}f(x)}{h}\Bigr)
 =\lim_{\iota\in I}e'(y_{\iota})\notag\\
&=e'(y_{x})
\end{align}
for all $e'\in G$. By \cite[4.10 Proposition (i), p.\ 21]{kruse2018_3} the topology $\sigma(E,G)$ 
and the initial topology of $E$ coincide on $\overline{B}$. Combining this fact with
\eqref{eq:weak_limit_2}, we deduce that 
\[
 (\partial^{\beta+e_{n}})^{E}f(x)
=\lim_{\substack{h\to 0\\ h\in\R, h\neq 0}}\frac{(\partial^{\beta})^{E}f(x+he_{n})-(\partial^{\beta})^{E}f(x)}{h}
=y_{x}.
\]
In addition, $e'\circ(\partial^{\beta+e_{n}})^{E}f=\partial^{\beta+e_{n}}(e'\circ f)$ is continuous on 
$\overline{\mathbb{B}(x,\varepsilon_{x})}$ for all $e'\in G$, meaning that the restriction of 
$(\partial^{\beta+e_{n}})^{E}f$ on $\overline{\mathbb{B}(x,\varepsilon_{x})}$ to $(E,\sigma(E,G))$
is continuous, and the range $(\partial^{\beta+e_{n}})^{E}f(\overline{\mathbb{B}(x,\varepsilon_{x})})$ is bounded 
in $E$. As before we use that $\sigma(E,G)$ and the initial topology of $E$ coincide on 
$(\partial^{\beta+e_{n}})^{E}f(\overline{\mathbb{B}(x,\varepsilon_{x})})$, 
which implies that the restriction of $(\partial^{\beta+e_{n}})^{E}f$ on $\overline{\mathbb{B}(x,\varepsilon_{x})}$ 
is continuous w.r.t.\ the initial topology of $E$. 
Since continuity is a local property and $x\in\Omega$ is arbitrary, we conclude that $(\partial^{\beta+e_{n}})^{E}f$ 
is continuous on $\Omega$.
\end{proof}

In the special case that $\Omega=\R$, $G=E'$ and $E$ is a Montel space, i.e.\ a barrelled semi-Montel space, 
a different proof of the preceding weak-strong principle can be found in the proof of 
\cite[Lemma 4, p.\ 15]{carroll1961}. 
This proof uses the Banach-Steinhaus theorem and needs the barrelledness of the Montel space $E_{b}'$. 
Our weak-strong principle \prettyref{thm:weak_strong_finite_order} does not need the barrelledness of $E$, 
hence can be applied to non-barrelled semi-Montel spaces like 
$E=(\mathcal{C}^{\infty}_{\overline{\partial},b}(\D),\beta)$ where $\beta$ is the strict topology 
(see page~\pageref{page:strict}, \cite[3.14 Proposition, p.\ 12]{kruse2018_3} and 
\cite[3.15 Remark, p.\ 13]{kruse2018_3}).

Besides the `full' $\mathcal{C}^{k}$-weak-strong principle for $k<\infty$ and semi-Montel $E$, part b) 
of \prettyref{cor:ext_B_unique_loc_Hoelder} also suggests an `almost' $\mathcal{C}^{k}$-weak-strong principle 
in terms of \cite[3.1.6 Rademacher's theorem, p.\ 216]{federer1969}, which we prepare next.

\begin{defn}[{generalised Gelfand space}]
We say that an lcHs $E$ is a \emph{generalised Gelfand space}
if every Lipschitz continuous map $f\colon [0,1]\to E$ is differentiable almost everywhere w.r.t\ to 
the one-dimensional Lebesgue measure. 
\end{defn}

If $E$ is a real Fr\'echet space ($\K=\R$), then this definition coincides with 
the definition of a Fr\'echet--Gelfand space given in \cite[Definition 2.2, p.\ 17]{mankiewicz1973}. In particular, 
every real nuclear Fr\'echet lattice (see \cite[Theorem 6, Corollary, p.\ 375, 378]{grosse-erdmann1991}) 
and more general every real Fr\'echet--Montel space is a generalised Gelfand space 
by \cite[Theorem 2.9, p.\ 18]{mankiewicz1973}. 
If $E$ is a Banach space, then this definition coincides with the definition of a Gelfand space given 
in \cite[Definition 4.3.1, p.\ 106-107]{diesteluhl1977} by \cite[Proposition 1.2.4, p.\ 18]{arendt2011}. 
A Banach space is a Gelfand space if and only if it has the Radon--Nikod\'ym 
property by \cite[Theorem 4.3.2, p.\ 107]{diesteluhl1977}. 
Thus separable duals of Banach spaces, reflexive Banach spaces and $\ell^{1}(\Gamma)$ 
for any set $\Gamma$ are generalised Gelfand spaces by \cite[Theorem 3.3.1 (Dunford-Pettis), p.\ 79]{diesteluhl1977}, 
\cite[Corollary 3.3.4 (Phillips), p.\ 82]{diesteluhl1977} and \cite[Corollary 3.3.8, p.\ 83]{diesteluhl1977}. 
The Banach spaces $c_{0}$, $\ell^{\infty}$, $\mathcal{C}([0,1])$, $L^{1}([0,1])$ and $L^{\infty}([0,1])$ do not have the 
Radon-Nikod\'ym property and hence are not generalised Gelfand spaces by \cite[Proposition 1.2.9, p.\ 20]{arendt2011}, 
\cite[Example 1.2.8, p.\ 20]{arendt2011} and \cite[Proposition 1.2.10, p.\ 21]{arendt2011}.

\begin{cor}\label{cor:allmost_weak_strong_finite_order}
Let $E$ be a locally complete generalised Gelfand space, $G\subset E'$ determine boundedness, $\Omega\subset\R$ open and
$k\in\N$. If $f\colon\Omega\to E$ is such that $e'\circ f\in \mathcal{C}^{k}(\Omega)$ for all $e'\in G$, 
then $f\in\mathcal{C}^{k-1,1}_{loc}(\Omega,E)$ and the derivative 
$(\partial^{k})^{E}f(x)$ exists for Lebesgue almost all $x\in\Omega$.
\end{cor}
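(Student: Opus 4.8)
The plan is to split the statement into its two assertions: the regularity $f\in\mathcal{C}^{k-1,1}_{loc}(\Omega,E)$ and the almost-everywhere existence of the top derivative. The first assertion will be immediate from an earlier corollary, and the second will be reduced to the generalised Gelfand hypothesis by a Rademacher-type argument.

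First I would obtain $f\in\mathcal{C}^{k-1,1}_{loc}(\Omega,E)$ directly from \prettyref{cor:ext_B_unique_loc_Hoelder} b). Indeed, $E$ is locally complete (this is part of the hypothesis on generalised Gelfand spaces here), $G$ determines boundedness, $\Omega\subset\R$ is open, and $e'\circ f\in\mathcal{C}^{k}(\Omega)=\mathcal{C}^{(k-1)+1}(\Omega)$ for every $e'\in G$; applying that corollary with $k-1\in\N_{0}$ in place of $k$ yields $f\in\mathcal{C}^{k-1,1}_{loc}(\Omega,E)$, which settles the first claim.

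Next I would exploit that, since $d=1$, membership in $\mathcal{C}^{k-1,1}_{loc}(\Omega,E)$ means precisely that the single $(k-1)$-th derivative $g:=(\partial^{k-1})^{E}f\colon\Omega\to E$ is continuous and locally Lipschitz continuous (the $\mathcal{C}^{0,1}(K)$-seminorm of $g$ being finite on every compact $K\subset\Omega$). On any compact interval $[a,b]\subset\Omega$ the map $g$ is therefore Lipschitz, and composing with the affine bijection $\phi\colon[0,1]\to[a,b]$, $\phi(t):=a+t(b-a)$, produces a Lipschitz map $h:=g\circ\phi\colon[0,1]\to E$. Because $E$ is a generalised Gelfand space, $h$ is differentiable at Lebesgue-almost every $t\in[0,1]$; as $\phi$ is an affine bijection (so $\phi$ and $\phi^{-1}$ carry null sets to null sets and the chain rule applies), it follows that $g$ is differentiable, i.e.\ that $(\partial^{1})^{E}g(x)$ exists, for Lebesgue-almost every $x\in(a,b)$.

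Finally I would pass from intervals to all of $\Omega$ by writing $\Omega$ as a countable union of compact intervals and using that a countable union of Lebesgue null sets is null, so that the set of $x\in\Omega$ at which $g$ fails to be differentiable is null. Since in the one-dimensional setting $(\partial^{k})^{E}f=(\partial^{1})^{E}(\partial^{k-1})^{E}f=(\partial^{1})^{E}g$, this shows $(\partial^{k})^{E}f(x)$ exists for almost all $x\in\Omega$, completing the proof. The only point requiring care is the reduction of the locally Lipschitz map $g$ on $\Omega$ to Lipschitz maps on $[0,1]$ so that the Gelfand hypothesis applies, together with checking that differentiability transfers under the affine change of variables and localises; the analytic substance of the statement is entirely carried by the generalised Gelfand property, which here plays the role of Rademacher's theorem.
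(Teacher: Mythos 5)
Your proposal is correct and takes essentially the same route as the paper's own proof: the first assertion via \prettyref{cor:ext_B_unique_loc_Hoelder} b) applied with $k-1\in\N_{0}$ in place of $k$, and the second by reparametrising $(\partial^{k-1})^{E}f$ on a compact interval $[a,b]\subset\Omega$ to a Lipschitz map on $[0,1]$ (the paper's $F(x):=(\partial^{k-1})^{E}f(a+x(b-a))$), invoking the generalised Gelfand property, and exhausting $\Omega$ by countably many compact intervals so that the exceptional set is a countable union of null sets. The only difference is cosmetic: you make explicit the transfer of null sets and differentiability under the affine change of variables, which the paper leaves implicit.
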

\begin{proof}
The first part follows from \prettyref{cor:ext_B_unique_loc_Hoelder} b). 
Now, let $[a,b]\subset\Omega$ be a bounded interval. 
We set $F\colon[0,1]\to E$, $F(x):=(\partial^{k-1})^{E}f(a+x(b-a))$. 
Then $F$ is Lipschitz continuous as $f\in\mathcal{C}^{k-1,1}_{loc}(\Omega,E)$. 
This yields that $F$ is differentiable on $[0,1]$ almost everywhere because $E$ is a generalised Gelfand space, 
implying that $(\partial^{k-1})^{E}f$ is differentiable on $[a,b]$ almost everywhere. 
Since the open set $\Omega\subset\R$ can be written as a countable union of disjoint open intervals 
$I_{n}$, $n\in\N$, and each $I_{n}$ is a countable union of closed bounded intervals 
$[a_{m},b_{m}]$, $m\in\N$, our statement follows from the fact that the countable union of null sets is a null set.
\end{proof}

To the best of our knowledge there are still some open problems for continuously partially differentiable functions 
of finite order. 

\begin{que}
\begin{enumerate}
\item[(i)] Are there other spaces than semi-Montel spaces $E$ for which the `full' 
$\mathcal{C}^{k}$-weak-strong principle \prettyref{thm:weak_strong_finite_order} with $k<\infty$ is true? 
For instance, if $k=0$, then it is still true if $E$ is 
an lcHs such that every bounded set is already precompact in $E$ by \cite[2.10 Lemma, p.\ 140]{B2}. 
Does this hold for $0<k<\infty$ as well?
\item[(ii)] Does the `almost' $\mathcal{C}^{k}$-weak-strong principle \prettyref{cor:allmost_weak_strong_finite_order} 
also hold for $d>1$? 
\item[(iii)] For every $\varepsilon>0$ does there exist a function $g\in\mathcal{C}^{k}(\R,E)$ such that 
$\lambda(\{x\in\Omega\;|\;f(x)\neq g(x)\})<\varepsilon$ in \prettyref{cor:allmost_weak_strong_finite_order} 
where $\lambda$ is the one-dimensional Lebesgue measure. 
In the case that $E=\R^{n}$ this is true by \cite[Theorem 3.1.15, p.\ 227]{federer1969}.
\item[(iv)] Is there a `Radon--Nikod\'ym type' characterisation of generalised Gelfand spaces as in the Banach case? 
\end{enumerate}
\end{que}
\section{Vector-valued Blaschke theorems}
In this section we prove several convergence theorems for Banach-valued functions in the 
spirit of Blaschke's convergence theorem \cite[Theorem 7.4, p.\ 219]{burckel1979} as it is done in 
\cite[Theorem 2.4, p.\ 786]{Arendt2000} and \cite[Corollary 2.5, p.\ 786--787]{Arendt2000} 
for bounded holomorphic functions and more general in \cite[Corollary 4.2, p.\ 695]{F/J/W} 
for bounded functions in the kernel of a hypoelliptic linear partial differential operator. 
Blaschke's convergence theorem says that if $(z_{n})_{n\in\N}\subset\D$ is a sequence of distinct elements
with $\sum_{n\in\N}(1-|z_{n}|)=\infty$ and if $(f_{k})_{k\in\N}$ is a bounded sequence in $H^{\infty}(\D)$ 
such that $(f_{k}(z_{n}))_{k}$ converges in $\C$ for each $n\in\N$, then there is $f\in H^{\infty}(\D)$ such that 
$(f_{k})_{k}$ converges uniformly to $f$ on the compact subsets of $\D$, i.e.\ w.r.t.\ to $\tau_{co}$.

\begin{prop}[{\cite[Proposition 4.1, p.\ 695]{F/J/W}}]\label{prop:Blaschke_operator}
Let $(E,\|\cdot\|)$ be a Banach space, $Z$ a Banach space whose closed unit ball $B_{Z}$ is a 
compact subset of an lcHs $Y$ and let $(\mathsf{A}_{\iota})_{\iota\in I}$ be a net in $Y\varepsilon E$ 
such that
\[
\sup_{\iota\in I}\{\|\mathsf{A}_{\iota}(y)\|\;|\;y\in B_{Z}^{\circ Y'}\}<\infty.
\] 
Assume further that there exists a $\sigma(Y',Z)$-dense subspace $X\subset Y'$ such that 
$\lim_{\iota}\mathsf{A}_{\iota}(x)$ exists for each $x\in X$. Then there is 
$\mathsf{A}\in Y\varepsilon E$ with $\mathsf{A}(B_{Z}^{\circ Y'})$ bounded and 
$\lim_{\iota}\mathsf{A}_{\iota}=\mathsf{A}$ uniformly on the equicontinuous subsets of $Y'$, 
i.e.\ for all equicontinuous $B\subset Y'$ and $\varepsilon>0$ there exists $\varsigma\in I$ such that 
\[
\sup_{y\in B}\|\mathsf{A}_{\iota}(y)-\mathsf{A}(y)\|<\varepsilon
\]
for each $\iota\geq\varsigma$.
\end{prop}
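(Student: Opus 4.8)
The plan is to first produce the candidate limit $\mathsf{A}\in Y\varepsilon E$ by means of the extension theorem \prettyref{prop:ext_B_set_uni}, and then to prove that the net converges to it by reducing, through a density argument, the convergence on equicontinuous sets to the pointwise convergence already available on $X$. Set $M:=\sup_{\iota}\sup_{y\in B_Z^{\circ Y'}}\|\mathsf{A}_\iota(y)\|$ and define $\mathsf{A}_0\colon X\to E$, $\mathsf{A}_0(x):=\lim_\iota \mathsf{A}_\iota(x)$, which is linear. To invoke \prettyref{prop:ext_B_set_uni} with $G:=E'$ (which determines boundedness by Mackey's theorem, $E$ being locally complete as a Banach space) I would verify that $\mathsf{A}_0$ is $\sigma(X,Z)$-$\sigma(E,E')$-continuous. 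For fixed $e'\in E'$ each functional $e'\circ\mathsf{A}_\iota$ lies in $Y=(Y'_\kappa)'$, and from $|e'(\mathsf{A}_\iota(y))|\le\|e'\|M$ for $y\in B_Z^{\circ Y'}$ together with the bipolar theorem (using that $B_Z$ is absolutely convex and $\sigma(Y,Y')$-compact, hence coincides with its bipolar) one gets $e'\circ\mathsf{A}_\iota\in\|e'\|M\,B_Z\subset Z$. As $\|e'\|M\,B_Z$ is compact in $Y$ and $X$ separates the points of $Z$ (a consequence of the $\sigma(Y',Z)$-density), the net $(e'\circ\mathsf{A}_\iota)$ has a unique cluster point $y_{e'}\in Z$ and therefore converges to it in $Y$; evaluating at $x\in X$ gives $e'(\mathsf{A}_0(x))=\langle x,y_{e'}\rangle$, which is $\sigma(X,Z)$-continuous. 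Thus \prettyref{prop:ext_B_set_uni} yields a (unique) $\mathsf{A}\in Y\varepsilon E$ extending $\mathsf{A}_0$ with $\mathsf{A}(B_Z^{\circ Y'})$ bounded, say of norm at most $M'$ on $B_Z^{\circ Y'}$.

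The heart of the matter is to upgrade the convergence. Since $\mathsf{A}_\iota$ and $\mathsf{A}$ are linear and map the $\kappa$-neighbourhood $B_Z^{\circ Y'}$ into the balls of radius $M$ resp.\ $M'$ of $E$, they are controlled by the seminorm $q(w):=\sup_{z\in B_Z}|w(z)|=\|w_{\mid Z}\|_{Z'}$, namely $\|(\mathsf{A}_\iota-\mathsf{A})(w)\|\le 2\max(M,M')\,q(w)$ for all $w\in Y'$ and all $\iota$. The key observation is that $X$ is not merely $\sigma(Y',Z)$-dense but $q$-dense: because $B_Z$ is an absolutely convex $\sigma(Z,Y')$-compact subset of $Z$, the seminorm $q$ is continuous for the Mackey topology $\tau(Y',Z)$, and the subspace $X$, being $\sigma(Y',Z)$-dense, is dense for every topology compatible with the dual pair $\langle Y',Z\rangle$, in particular for $\tau(Y',Z)$. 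Hence, given $y'\in Y'$ and $\delta>0$, there is $x\in X$ with $q(y'-x)<\delta$. Combining this with the pointwise convergence $\mathsf{A}_\iota(x)\to\mathsf{A}(x)$ on $X$ and the uniform estimate above, a standard three-$\varepsilon$ argument shows that $\mathsf{A}_\iota(y')\to\mathsf{A}(y')$ in norm for \emph{every} $y'\in Y'$.

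Finally I would pass from this pointwise statement to uniform convergence on an equicontinuous set $H\subset Y'$. Such an $H$ is equicontinuous as a family of functionals on $Y$, so the restrictions $\{w_{\mid Z}:w\in H\}$ are bounded and equicontinuous on the compact set $B_Z$; by the Arzel\`a--Ascoli theorem they form a relatively $\|\cdot\|_{Z'}$-compact, i.e.\ $q$-precompact, subset of $Z'$. Covering $H$ by finitely many $q$-balls centred at points $w_1,\dots,w_n\in H$, using the pointwise norm convergence at each $w_j$ and once more the estimate $\|(\mathsf{A}_\iota-\mathsf{A})(w-w_j)\|\le 2\max(M,M')\,q(w-w_j)$, I obtain $\sup_{y\in H}\|(\mathsf{A}_\iota-\mathsf{A})(y)\|<\varepsilon$ for $\iota$ large, which is exactly convergence in $Y\varepsilon E$.

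The main obstacle is the middle step. The hypotheses give only \emph{weak} convergence $\mathsf{A}_\iota(y')\to\mathsf{A}(y')$ at a general $y'\in Y'$, whereas the conclusion (already for singleton equicontinuous sets) demands \emph{norm} convergence; turning the former into the latter is precisely what the passage from $\sigma(Y',Z)$-density to $\tau(Y',Z)$- and hence $q$-density of $X$, powered by the compactness of $B_Z$ in $Y$, accomplishes.
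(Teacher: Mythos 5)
The paper itself contains no proof of this proposition --- it is imported verbatim from \cite[Proposition 4.1, p.\ 695]{F/J/W} --- so there is no in-paper argument to compare against; your proof is correct and follows the same strategy as the cited source: build $\mathsf{A}$ by feeding the pointwise limit on $X$ into \prettyref{prop:ext_B_set_uni} with $G=E'$ (the bipolar/cluster-point argument placing $e'\circ\mathsf{A}_{\iota}$ in $\|e'\|M B_{Z}\subset Z$ correctly yields the required $\sigma(X,Z)$-$\sigma(E,E')$-continuity), then upgrade pointwise to uniform convergence on equicontinuous sets via the $q$-density of $X$ and Ascoli on the compact set $B_{Z}$. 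One cosmetic caveat: $\langle Y',Z\rangle$ need not be a separated dual pair since $Z$ need not separate $Y'$, but your Hahn--Banach/Mackey--Arens step only uses that $Y'$ separates $Z$ (which follows from $Z\subset Y$ and Hahn--Banach), so the passage from $\sigma(Y',Z)$-density to $\tau(Y',Z)$- and hence $q$-density of $X$ is sound as written.
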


Next, we generalise \cite[Corollary 4.2, p.\ 695]{F/J/W}.

\begin{cor}\label{cor:Blaschke_vector_valued}
Let $(E,\|\cdot\|)$ be a Banach space and $\F$ and $\FE$ be $\varepsilon$-into-compatible. 
Let $(T^{E},T^{\K})$ be a generator for $(\mathcal{F}\nu,E)$ 
and a strong, consistent family for $(\mathcal{F},E)$, $\FV$ a Banach space 
whose closed unit ball $B_{\mathcal{F}\nu(\Omega)}$ 
is a compact subset of $\F$ and $U$ a set of uniqueness for $(T^{\K},\mathcal{F}\nu)$. 

If $(f_{\iota})_{\iota\in I}\subset\Feps$ is a bounded net in $\FVE$ such that 
$\lim_{\iota}T^{E}(f_{\iota})(x)$ exists for all $x\in U$, then there is $f\in\Feps$ such that 
$(f_{\iota})_{\iota\in I}$ converges to $f$ in $\FE$.
\end{cor}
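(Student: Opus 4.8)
The plan is to transport the statement to the operator level, where it becomes exactly \prettyref{prop:Blaschke_operator}, using the map $S$ as a dictionary. First I would lift the net. Since each $f_{\iota}\in\Feps=S(\{u\in\F\varepsilon E\;|\;u(B_{\mathcal{F}\nu(\Omega)}^{\circ \F'})\;\text{bounded in}\; E\})$ and $S$ is injective, set $u_{\iota}:=S^{-1}(f_{\iota})\in\F\varepsilon E$, so that $S(u_{\iota})=f_{\iota}$ and $u_{\iota}(B_{\FV}^{\circ \F'})$ is bounded in $E$. The data for \prettyref{prop:Blaschke_operator} are then $Y:=\F$, $Z:=\FV$, $\mathsf{A}_{\iota}:=u_{\iota}$, and $X:=\operatorname{span}\{T^{\K}_{x}\;|\;x\in U\}$.

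Next I would check the uniform boundedness hypothesis. As $E$ is a Banach space, the boundedness of $(f_{\iota})_{\iota\in I}$ in $\FVE$ means $M:=\sup_{\iota\in I}|f_{\iota}|_{\FV}<\infty$. Applying \prettyref{prop:mingle-mangle} b) with the single seminorm $\|\cdot\|$ of $E$ gives
\[
\sup_{y'\in B_{\FV}^{\circ \F'}}\|u_{\iota}(y')\|=|S(u_{\iota})|_{\FV}=|f_{\iota}|_{\FV}\leq M
\]
for every $\iota\in I$, which is precisely the requirement $\sup_{\iota}\{\|\mathsf{A}_{\iota}(y)\|\;|\;y\in B_{Z}^{\circ Y'}\}<\infty$. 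For the pointwise data I would use consistency: $X\subset\F'=Y'$ because $T^{\K}_{x}\in\F'$, and
\[
\mathsf{A}_{\iota}(T^{\K}_{x})=u_{\iota}(T^{\K}_{x})=T^{E}(S(u_{\iota}))(x)=T^{E}(f_{\iota})(x),
\]
so $\lim_{\iota}\mathsf{A}_{\iota}(T^{\K}_{x})$ exists for every $x\in U$ by hypothesis and, by linearity, for every element of $X$.

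Then I would establish the required density exactly as in the proof of \prettyref{thm:ext_B_unique}: since $U$ is a set of uniqueness for $(T^{\K},\mathcal{F}\nu)$, $X$ is $\sigma(Z',Z)$-dense; the compactness of $B_{\FV}$ in $\F$ makes $\FV\hookrightarrow\F$ a continuous inclusion, so restriction sends $Y'$ into $Z'$ and upgrades $X$ to a $\sigma(Y',Z)$-dense subspace. Applying \prettyref{prop:Blaschke_operator} now yields $\mathsf{A}\in\F\varepsilon E$ with $\mathsf{A}(B_{\FV}^{\circ \F'})$ bounded and $\mathsf{A}_{\iota}\to\mathsf{A}$ uniformly on the equicontinuous subsets of $\F'$, i.e.\ $u_{\iota}\to\mathsf{A}$ in $\F\varepsilon E=L_{e}(\F_{\kappa}',E)$. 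Setting $f:=S(\mathsf{A})$, the boundedness of $\mathsf{A}(B_{\FV}^{\circ \F'})$ gives $f\in\Feps$, and since $\F$ and $\FE$ are $\varepsilon$-into-compatible, $S$ is a topological isomorphism onto its range, so the convergence $u_{\iota}\to\mathsf{A}$ transfers to $f_{\iota}=S(u_{\iota})\to S(\mathsf{A})=f$ in $\FE$, as claimed.

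I expect the verifications to be largely bookkeeping through $S$ and the consistency identity; the one point deserving care is the density step, namely promoting the $\sigma(Z',Z)$-density of $X$ (which the set-of-uniqueness property delivers) to the $\sigma(Y',Z)$-density that \prettyref{prop:Blaschke_operator} requires, which hinges on the continuous inclusion $\FV\hookrightarrow\F$ furnished by the compactness of $B_{\FV}$ in $\F$.
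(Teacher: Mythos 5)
Your proposal is correct and follows essentially the same route as the paper: lift the net through $S$, verify the uniform bound (the paper recomputes $\sup_{x\in\omega}\|\mathsf{A}_{\iota}(T^{\K}_{x}(\cdot)\nu(x))\|=|f_{\iota}|_{\FVE}$ directly via consistency, which is exactly the content of \prettyref{prop:mingle-mangle} b) that you invoke), establish the $\sigma(Y',Z)$-density of $X$ as in \prettyref{thm:ext_B_unique}, apply \prettyref{prop:Blaschke_operator}, and transfer the convergence back through the $\varepsilon$-into-compatibility. No gaps.
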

\begin{proof}
We set $X:=\operatorname{span}\{T^{\K}_{x}\;|\;x\in U\}$, $Y:=\F$ and $Z:=\FV$.  
As in the proof of \prettyref{thm:ext_B_unique} we observe that $X$ is $\sigma(Y',Z)$-dense in $Y'$. 
From $(f_{\iota})_{\iota\in I}\subset\Feps$ follows that there are $\mathsf{A}_{\iota}\in\F\varepsilon E$ 
with $S(\mathsf{A}_{\iota})=f_{\iota}$ for all $\iota\in I$. Since $(f_{\iota})_{\iota\in I}$ is a bounded net 
in $\FVE$, we note that 
\begin{align*}
  \sup_{\iota\in I}\sup_{x\in\omega}\|\mathsf{A}_{\iota}(T^{\K}_{x}(\cdot)\nu(x))\|
 &=\sup_{\iota\in I}\sup_{x\in\omega}\|T^{E}S(\mathsf{A}_{\iota})(x)\|\nu(x)
 =\sup_{\iota\in I}\sup_{x\in\omega}\|T^{E}f_{\iota}(x)\|\nu(x)\\
 &=\sup_{\iota\in I}|f_{\iota}|_{\FVE}<\infty
\end{align*}
by consistency. Further, $\lim_{\iota}S(\mathsf{A}_{\iota})(T^{\K}_{x})=\lim_{\iota}T^E(f_{\iota})(x)$
exists for each $x\in U$, implying the existence of $\lim_{\iota}S(\mathsf{A}_{\iota})(x)$ for each $x\in X$ 
by linearity. 
We apply \prettyref{prop:Blaschke_operator} and obtain $f:=S(\mathsf{A})\in\Feps$ 
such that $(\mathsf{A}_{\iota})_{\iota\in I}$ converges to $\mathsf{A}$ in $\F\varepsilon E$. 
From $\F$ and $\FE$ being $\varepsilon$-into-compatible it follows that 
$(f_{\iota})_{\iota\in I}$ converges to $f$ in $\FE$.
\end{proof}
 
First, we apply the preceding corollary $\gamma$-H\"older continuous functions.
Similar to $\mathcal{C}^{0,\gamma}(\overline{\Omega},E)$ we define the space of $E$-valued $\gamma$-H\"older continuous 
functions on $\Omega$ that vanish at a fixed point $z\in\Omega$, but with a different topology. 
Let $(\Omega,\d)$ be a metric space, $z\in\Omega$, $E$ an lcHs, $0<\gamma\leq 1$ and define
\[
\mathcal{C}^{[\gamma]}_{z}(\Omega,E):=\{f\in E^{\Omega}\;|\;f(z)=0\;\text{and}\; 
|f|_{\mathcal{C}^{0,\gamma}(\Omega),\alpha}<\infty\;\forall\;\alpha\in \mathfrak{A}\}.
\]
Further, we set $\omega:=\Omega^{2}\setminus\{(x,x)\;|\;x\in\Omega\}$, 
$\FE:=\{f\in\mathcal{C}(\Omega,E)\;|\;f(z)=0\}$ 
and $T^{E}\colon\FE\to E^{\omega}$, $T^{E}(f)(x,y):=f(x)-f(y)$, and 
\[
\nu\colon\omega\to [0,\infty),\;\nu(x,y):=\frac{1}{\d(x,y)^{\gamma}}.
\]
Then we have for every $\alpha\in\mathfrak{A}$ that
\[
 |f|_{\mathcal{C}^{0,\gamma}(\Omega),\alpha}=\sup_{x\in\omega}p_{\alpha}\bigl(T^{E}(f)(x)\bigr)\nu(x),
 \quad f\in\mathcal{C}^{[\gamma]}_{z}(\Omega,E),
\]
and observe that $\FVE=\mathcal{C}^{[\gamma]}_{z}(\Omega,E)$ with generator $(T^{E},T^{\K})$. 

\begin{cor}\label{cor:Hoelder_vanish_Blaschke}
Let $E$ be a Banach space, $(\Omega,\d)$ a metric space with finite diameter, $z\in\Omega$ and $0<\gamma\leq 1$.
If $(f_{\iota})_{\iota\in I}$ is a bounded net in $\mathcal{C}^{[\gamma]}_{z}(\Omega,E)$ such that 
$\lim_{\iota}f_{\iota}(x)$ exists for all $x$ in a dense subset $U\subset\Omega$,
then there is $f\in\mathcal{C}^{[\gamma]}_{z}(\Omega,E)$ such that 
$(f_{\iota})_{\iota\in I}$ converges to $f$ in $\mathcal{C}(\Omega,E)$ uniformly on 
compact subsets of $\Omega$.
\end{cor}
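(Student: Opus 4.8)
The plan is to realise $\mathcal{C}^{[\gamma]}_{z}(\Omega,E)$ as a space $\FVE$ of the type governed by \prettyref{cor:Blaschke_vector_valued} and to feed the net $(f_{\iota})$ into that corollary. Concretely, I would take $\F:=(\{g\in\mathcal{C}(\Omega)\;|\;g(z)=0\},\tau_{co})$ and $\FE:=(\{g\in\mathcal{C}(\Omega,E)\;|\;g(z)=0\},\tau_{co})$, so that with the operator $T^{E}$, the weight $\nu$ and the set $\omega=\Omega^{2}\setminus\{(x,x)\;|\;x\in\Omega\}$ introduced above one has $\FVE=\mathcal{C}^{[\gamma]}_{z}(\Omega,E)$ and $\FV=\mathcal{C}^{[\gamma]}_{z}(\Omega)$ with generator $(T^{E},T^{\K})$. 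Since convergence in $\FE$ is exactly uniform convergence on compact subsets of $\Omega$, the statement reduces to verifying the hypotheses of \prettyref{cor:Blaschke_vector_valued}.

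The structural hypotheses are verified essentially as in \prettyref{cor:hypo_weighted_ext_unique}. Consistency of $(T^{E},T^{\K})$ for $(\mathcal{F},E)$ holds because $T^{\K}_{(a,b)}=\delta_{a}-\delta_{b}\in\F'$ and $T^{E}S(u)(a,b)=u(\delta_{a})-u(\delta_{b})=u(T^{\K}_{(a,b)})$, while strength is immediate from $e'(f(a)-f(b))=(e'\circ f)(a)-(e'\circ f)(b)$ together with $(e'\circ f)(z)=e'(f(z))=0$, which also places $e'\circ f$ in $\F$. Next I would record that $\F$ and $\FE$ are $\varepsilon$-compatible by the same reasoning as in \prettyref{prop:co_top_isomorphism}: since $\Omega$ is metric, hence a $k_{\R}$-space, and $E$ is complete, $S$ is an isomorphism onto for $\mathcal{C}(\Omega)$ and $\mathcal{C}(\Omega,E)$, and this restricts to the closed hyperplanes cut out by the continuous evaluation $\delta_{z}$ (cf.\ \cite{kruse2017}). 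By \prettyref{prop:mingle-mangle} c) this yields $\Feps=\FVE$, so the given net indeed satisfies $(f_{\iota})\subset\Feps$.

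The two genuinely space-specific points are exactly where the finite diameter enters. From $\|f(x)\|=\|f(x)-f(z)\|\le|f|_{\mathcal{C}^{0,\gamma}(\Omega)}\,\d(x,z)^{\gamma}\le\operatorname{diam}(\Omega)^{\gamma}|f|_{\mathcal{C}^{0,\gamma}(\Omega)}$ one sees that the Hölder seminorm is a genuine norm on $\FV$ (a function with vanishing seminorm is constant, hence zero since $f(z)=0$) and dominates the sup-norm; completeness of $\FV$ then follows exactly as in \prettyref{prop:hypo_weighted_Banach} from the completeness of $(\mathcal{C}(\Omega),\tau_{co})$, so $\FV$ is a Banach space. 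The same estimate renders $B_{\FV}$ pointwise bounded, and $\|f(x)-f(y)\|\le\d(x,y)^{\gamma}$ renders it uniformly equicontinuous; by Arzel\`a--Ascoli $B_{\FV}$ is $\tau_{co}$-relatively compact, and it is $\tau_{co}$-closed because the constraints $f(z)=0$ and $\|f(x)-f(y)\|\le\d(x,y)^{\gamma}$ survive pointwise limits. Hence $B_{\FV}$ is compact in $\F$.

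Finally I must convert the density hypothesis into a set of uniqueness living in $\omega$. I would set $\widetilde{U}:=\{(a,b)\in\omega\;|\;a,b\in U\cup\{z\}\}$, adjoining $z$ to the given dense $U\subset\Omega$, which is legitimate because $f_{\iota}(z)=0$ for all $\iota$. For $(a,b)\in\widetilde{U}$ the limit $\lim_{\iota}T^{E}(f_{\iota})(a,b)=\lim_{\iota}f_{\iota}(a)-\lim_{\iota}f_{\iota}(b)$ exists, the $z$-entry contributing $0$; and $\widetilde{U}$ is a set of uniqueness since any $g\in\FV$ with $g(a)-g(b)=0$ on $\widetilde{U}$ satisfies, taking $b=z$, $g(a)=0$ for all $a$ in the dense set $U\cup\{z\}$, whence $g=0$ by continuity. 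With all hypotheses in place, \prettyref{cor:Blaschke_vector_valued} applied with set of uniqueness $\widetilde{U}$ yields $f\in\Feps=\mathcal{C}^{[\gamma]}_{z}(\Omega,E)$ with $(f_{\iota})\to f$ in $\FE$, i.e.\ uniformly on compact subsets. The main obstacle is the $\varepsilon$-compatibility step, which guarantees that $\Feps$ exhausts $\FVE$ so the given net lies in $\Feps$; the set-of-uniqueness translation is the only other non-routine point, and it is precisely where one exploits that every $f_{\iota}$ vanishes at $z$.
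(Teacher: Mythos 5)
Your proposal is correct and follows essentially the same route as the paper: the same choices $\F=\{g\in\mathcal{C}(\Omega)\,|\,g(z)=0\}$, $\FE=\{g\in\mathcal{C}(\Omega,E)\,|\,g(z)=0\}$ with $\tau_{co}$, the same generator and weight on $\omega=\Omega^{2}\setminus\{(x,x)\,|\,x\in\Omega\}$, Arzel\`a--Ascoli plus the finite-diameter estimate for compactness of $B_{\FV}$, \prettyref{prop:mingle-mangle} c) to get $\Feps=\FVE$, and finally \prettyref{cor:Blaschke_vector_valued}. The only deviations are cosmetic: the paper cites \cite[3.1 Bemerkung, p.\ 141]{B2} for the $\varepsilon$-compatibility and \cite[Proposition 1.6.2, p.\ 20]{Weaver} for the Banach space property where you argue directly (your complemented-hyperplane sketch and completeness argument are sound), and the paper uses the pairs $(x,z)$, $x\in U$, as its set of uniqueness rather than your slightly larger $\widetilde{U}$.
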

\begin{proof}
We take $\F:=\{f\in\mathcal{C}(\Omega)\;|\;f(z)=0\}$ 
and $\FE:=\{f\in\mathcal{C}(\Omega,E)\;|\;f(z)=0\}$. Then we have 
$\FV=\mathcal{C}^{[\gamma]}_{z}(\Omega)$ 
and $\FVE=\mathcal{C}^{[\gamma]}_{z}(\Omega,E)$ with the weight $\nu$ and generator $(T^{E},T^{\K})$ 
for $(\mathcal{F}\nu,E)$ described above.
Due to \cite[3.1 Bemerkung, p.\ 141]{B2} the spaces $\F$ and $\FE$, equipped 
with the compact-open topology $\tau_{co}$, 
are $\varepsilon$-compatible. Obviously, $(T^{E},T^{\K})$ is a strong, consistent family for $(\mathcal{F},E)$.
In addition, $\FV=\mathcal{C}^{[\gamma]}_{z}(\Omega)$ is a Banach space 
by \cite[Proposition 1.6.2, p.\ 20]{Weaver}. For all $f$ from the closed unit ball $B_{\FV}$ 
of $\FV$ we have
\[
|f(x)-f(y)|\leq \d(x,y)^{\gamma}\leq\operatorname{diam}(\Omega)^{\gamma},\quad x,y\in\Omega,
\]
where $\operatorname{diam}(\Omega):=\sup\{\d(x,y)\;|\;x,y\in\Omega\}$ is the finite diameter of $\Omega$. 
It follows that $B_{\FV}$ is (uniformly) equicontinuous and 
$\{f(x)\;|\;f\in B_{\FV}\}$ is bounded in $\K$ for all $x\in\Omega$. 
Ascoli's theorem (see e.g.\ \cite[Theorem 47.1, p.\ 290]{munkres2000}) implies the compactness of 
$B_{\FV}$ in $\F$.
Furthermore, the $\varepsilon$-compatibility of $\F$ and $\FE$ 
in combination with the consistency of $(T^{E},T^{\K})$ for $(\mathcal{F},E)$ gives $\Feps=\FVE$
as linear spaces by \prettyref{prop:mingle-mangle} c). 
We note that $\lim_{\iota}f_{\iota}(x)=\lim_{\iota}T^{E}(f_{\iota})(x,z)$ for all $x$ in $U$,
proving our claim by \prettyref{cor:Blaschke_vector_valued}.
\end{proof}

The space $\mathcal{C}^{[\gamma]}_{z}(\Omega)$ is named $\operatorname{Lip}_{0}(\Omega^{\gamma})$ 
in \cite{Weaver} (see \cite[Definition 1.6.1 (b), p.\ 19]{Weaver} and \cite[Definition 1.1.2, p.\ 2]{Weaver}). 
\prettyref{cor:Hoelder_vanish_Blaschke} generalises \cite[Proposition 2.1.7, p.\ 38]{Weaver} 
(in combination with \cite[Proposition 1.2.4, p.\ 5]{Weaver}) where $\Omega$ is compact, $U=\Omega$ and $E=\K$. 

\begin{cor}\label{cor:Hoelder_Blaschke}
Let $E$ be a Banach space, $\Omega\subset\R^{d}$ open and bounded, $k\in\N_{0}$ and $0<\gamma\leq 1$. 
In the case $k\geq 1$, assume additionally that $\Omega$ has Lipschitz boundary. 
If $(f_{\iota})_{\iota\in I}$ is a bounded net in $\mathcal{C}^{k,\gamma}(\overline{\Omega},E)$ such that 
\begin{enumerate}
\item [(i)] $\lim_{\iota}f_{\iota}(x)$ exists for all $x$ in a dense subset $U\subset\Omega$, or if
\item [(ii)] $\lim_{\iota}(\partial^{e_{n}})^{E}f_{\iota}(x)$ exists for all $1\leq n\leq d$ and $x$ in 
a dense subset $U\subset\Omega$, $\Omega$ is connected and there is $x_{0}\in\overline{\Omega}$ such that 
$\lim_{\iota}f_{\iota}(x_{0})$ exists and $k\geq 1$, 
\end{enumerate}
then there is $f\in\mathcal{C}^{k,\gamma}(\overline{\Omega},E)$ such that 
$(f_{\iota})_{\iota\in I}$ converges to $f$ in $\mathcal{C}^{k}(\overline{\Omega},E)$.
\end{cor}
\begin{proof}
As in \prettyref{cor:ext_B_unique} we choose $\F:=\mathcal{C}^{k}(\overline{\Omega})$ 
and $\FE:=\mathcal{C}^{k}(\overline{\Omega},E)$ 
as well as $\FV:=\mathcal{C}^{k,\gamma}(\overline{\Omega})$ 
and $\FVE:=\mathcal{C}^{k,\gamma}(\overline{\Omega},E)$ with the weight $\nu$ and generator $(T^{E},T^{\K})$ 
for $(\mathcal{F}\nu,E)$ described above of \prettyref{cor:ext_B_unique}.
By the proof of \prettyref{cor:ext_B_unique} all conditions of \prettyref{cor:Blaschke_vector_valued} are satisfied, 
which implies our statement.
\end{proof}

\begin{cor}\label{cor:loc_Hoelder_Blaschke}
Let $E$ be a Banach space, $\Omega\subset\R^{d}$ open, $k\in\N_{0}$ and $0<\gamma\leq 1$.
If $(f_{\iota})_{\iota\in I}$ is a bounded net in $\mathcal{C}^{k,\gamma}_{loc}(\Omega,E)$ such that 
\begin{enumerate}
\item [(i)] $\lim_{\iota}f_{\iota}(x)$ exists for all $x$ in a dense subset $U\subset\Omega$, or if
\item [(ii)] $\lim_{\iota}(\partial^{e_{n}})^{E}f_{\iota}(x)$ exists for all $1\leq n\leq d$ and $x$ in 
a dense subset $U\subset\Omega$, $\Omega$ is connected and there is $x_{0}\in\Omega$ such that 
$\lim_{\iota}f_{\iota}(x_{0})$ exists and $k\geq 1$, 
\end{enumerate}
then there is $f\in\mathcal{C}^{k,\gamma}_{loc}(\Omega,E)$ such that 
$(f_{\iota})_{\iota\in I}$ converges to $f$ in $\mathcal{C}^{k}(\Omega,E)$.
\end{cor}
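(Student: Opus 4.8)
The plan is to localise \prettyref{cor:Hoelder_Blaschke} by means of an exhaustion of $\Omega$, exactly in the spirit of the passage from \prettyref{cor:ext_B_unique} to \prettyref{cor:ext_B_unique_loc_Hoelder} a). First I would fix an exhaustion $(\Omega_{n})_{n\in\N}$ of $\Omega$ by open, relatively compact sets with $\overline{\Omega_{n}}\subset\Omega_{n+1}$ and observe that, for each $n$, the restriction of every member of the bounded net lies in $\mathcal{C}^{k,\gamma}(\overline{\Omega_{n}},E)$: since $\overline{\Omega_{n}}$ is compact in $\Omega$, all partial derivatives up to order $k$ are already defined and continuous on a neighbourhood of $\overline{\Omega_{n}}$, hence extend continuously to $\overline{\Omega_{n}}$, and the seminorm $|\cdot|_{\overline{\Omega_{n}}}$ of $\mathcal{C}^{k,\gamma}_{loc}(\Omega,E)$ controls exactly $|\cdot|_{\mathcal{C}^{k,\gamma}(\overline{\Omega_{n}})}$ by continuity. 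Consequently $(f_{\iota}|_{\Omega_{n}})_{\iota\in I}$ is a bounded net in the Banach space $\mathcal{C}^{k,\gamma}(\overline{\Omega_{n}},E)$.

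Next I would transfer the convergence hypotheses to each $\Omega_{n}$. In case (i) this is immediate: if $U\subset\Omega$ is dense, then $U\cap\Omega_{n}$ is dense in the open set $\Omega_{n}$, so \prettyref{cor:Hoelder_Blaschke} (i) applies on $\Omega_{n}$ and yields a limit $f^{(n)}\in\mathcal{C}^{k,\gamma}(\overline{\Omega_{n}},E)$ with $f_{\iota}|_{\Omega_{n}}\to f^{(n)}$ in $\mathcal{C}^{k}(\overline{\Omega_{n}},E)$. Case (ii) is more delicate, because \prettyref{cor:Hoelder_Blaschke} (ii) requires the domain to be connected and to contain the base point $x_{0}$. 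To arrange this for the connected $\Omega$, I would choose the exhaustion more carefully: starting from any exhaustion $(\Omega_{n}')$ with $x_{0}\in\Omega_{1}'$, I replace $\Omega_{n}$ by the connected component of $\Omega_{n}'$ containing $x_{0}$. Since $\Omega$ is open in $\R^{d}$, hence locally connected, each such component is open; it is connected, relatively compact, contains $x_{0}$, and a path-connectedness argument (every $x\in\Omega$ is joined to $x_{0}$ by a compact path lying in some $\Omega_{m}'$) shows $\bigcup_{n}\Omega_{n}=\Omega$ and $\Omega_{n}\subset\Omega_{n+1}$. On this exhaustion \prettyref{cor:Hoelder_Blaschke} (ii) applies on each $\Omega_{n}$ and again produces $f^{(n)}\in\mathcal{C}^{k,\gamma}(\overline{\Omega_{n}},E)$.

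Finally I would glue the pieces. Convergence in $\mathcal{C}^{k}(\overline{\Omega_{n}},E)$ forces pointwise convergence, so by uniqueness of limits $f^{(n+1)}|_{\Omega_{n}}=f^{(n)}$, and the $f^{(n)}$ patch to a single $f\colon\Omega\to E$ with $f|_{\Omega_{n}}=f^{(n)}$. This $f$ is $\mathcal{C}^{k}$, and since every compact $K\subset\Omega$ lies in some $\Omega_{n}$ on which $f=f^{(n)}\in\mathcal{C}^{k,\gamma}(\overline{\Omega_{n}},E)$, the seminorm $|f|_{K}$ is finite, so $f\in\mathcal{C}^{k,\gamma}_{loc}(\Omega,E)$. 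The same remark that each compact $K$ sits inside some $\Omega_{n}$ turns the convergence $f_{\iota}|_{\Omega_{n}}\to f^{(n)}$ in $\mathcal{C}^{k}(\overline{\Omega_{n}},E)$ into convergence with respect to every seminorm of $\tau_{\mathcal{C}^{k}}$, i.e.\ $f_{\iota}\to f$ in $\mathcal{C}^{k}(\Omega,E)$.

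I expect the main obstacle to be case (ii): its hypotheses are global (connectedness of $\Omega$ together with a single base point $x_{0}$) and do not survive an arbitrary restriction to $\Omega_{n}$. The construction of a connected, relatively compact exhaustion through $x_{0}$ is precisely what makes the localisation go through, and checking that it is genuinely increasing and exhausting is the one point requiring care; the gluing and the final topological bookkeeping are then routine.
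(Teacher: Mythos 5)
Your proposal is correct and takes essentially the same route as the paper: restrict the net to an exhaustion of $\Omega$ by open, relatively compact sets (in case (ii) chosen connected and containing $x_{0}$), apply \prettyref{cor:Hoelder_Blaschke} on each piece, and glue the limits, noting that every compact $K\subset\Omega$ lies in some $\Omega_{n}$. The only difference is that you spell out the construction of the connected exhaustion through $x_{0}$ via connected components, a detail the paper's proof asserts without elaboration.
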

\begin{proof}
Let $(\Omega_{n})_{n\in\N}$ be an exhaustion of $\Omega$ with open, relatively compact sets $\Omega_{n}\subset\Omega$ 
such that $\Omega_{n}$ has Lipschitz boundary, $\Omega_{n}\subset\Omega_{n+1}$ for all $n\in\N$ and, 
in addition, $x_{0}\in\Omega_{1}$ and $\Omega_{n}$ is connected for each $n\in\N$ in case (ii) 
(see the proof of \prettyref{cor:ext_B_unique_loc_Hoelder}).
The restriction of $(f_{\iota})_{\iota\in I}$ to $\Omega_{n}$ is a bounded net in 
$\mathcal{C}^{k,\gamma}(\overline{\Omega}_{n},E)$ for each $n\in\N$. By \prettyref{cor:Hoelder_Blaschke} there 
is $F_{n}\in\mathcal{C}^{k,\gamma}(\overline{\Omega}_{n},E)$ for each $n\in\N$ such that 
the restriction of $(f_{\iota})_{\iota\in I}$ to $\Omega_{n}$ converges to $F_{n}$ 
in $\mathcal{C}^{k}(\overline{\Omega}_{n},E)$ since $U\cap\Omega_{n}$ is dense in $\Omega_{n}$ 
due to $\Omega_{n}$ being open and $x_{0}$ being an element of the connected set $\Omega_{n}$ in case (ii). 
The limits $F_{n+1}$ and $F_{n}$ coincide on $\Omega_{n}$ for each $n\in\N$. Thus the 
definition $f:=F_{n}$ on $\Omega_{n}$ for each $n\in\N$ gives a well-defined 
function $f\in\mathcal{C}^{k,\gamma}_{loc}(\Omega,E)$, 
which is a limit of $(f_{\iota})_{\iota\in I}$ in $\mathcal{C}^{k}(\Omega,E)$.
\end{proof}

\begin{cor}\label{cor:k+1_smooth_Blaschke}
Let $E$ be a Banach space, $\Omega\subset\R^{d}$ open and $k\in\N_{0}$.
If $(f_{\iota})_{\iota\in I}$ is a bounded net in $\mathcal{C}^{k+1}(\Omega,E)$ such that 
\begin{enumerate}
\item [(i)] $\lim_{\iota}f_{\iota}(x)$ exists for all $x$ in a dense subset $U\subset\Omega$, or if
\item [(ii)] $\lim_{\iota}(\partial^{e_{n}})^{E}f_{\iota}(x)$ exists for all $1\leq n\leq d$ and $x$ in 
a dense subset $U\subset\Omega$, $\Omega$ is connected and there is $x_{0}\in\Omega$ such that 
$\lim_{\iota}f_{\iota}(x_{0})$ exists, 
\end{enumerate}
then there is $f\in\mathcal{C}^{k,1}_{loc}(\Omega,E)$ such that 
$(f_{\iota})_{\iota\in I}$ converges to $f$ in $\mathcal{C}^{k}(\Omega,E)$.
\end{cor}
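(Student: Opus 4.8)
The plan is to derive this statement from \prettyref{cor:loc_Hoelder_Blaschke} applied with $\gamma=1$, exactly as \prettyref{cor:ext_B_unique_loc_Hoelder} b) was deduced from part a) of that corollary. The convergence hypotheses (i) and (ii) here are literally those of \prettyref{cor:loc_Hoelder_Blaschke}, and the conclusion of the latter with $\gamma=1$ produces precisely a limit $f\in\mathcal{C}^{k,1}_{loc}(\Omega,E)$ with $f_{\iota}\to f$ in $\mathcal{C}^{k}(\Omega,E)$. Hence everything reduces to a single structural fact: a bounded net in $\mathcal{C}^{k+1}(\Omega,E)$ is automatically a bounded net in $\mathcal{C}^{k,1}_{loc}(\Omega,E)$. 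Once this is in hand, I would simply feed $(f_{\iota})_{\iota\in I}$ into \prettyref{cor:loc_Hoelder_Blaschke} (case (i) to case (i), case (ii) to case (ii)) and read off the assertion.

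To establish the required boundedness I would repeat, for a net and with vector-valued increments, the mean value estimate used in the proof of \prettyref{cor:ext_B_unique_loc_Hoelder} b). Fix a compact $K\subset\Omega$ and choose, since $\Omega$ is open, a radius $\delta>0$ for which $K_{\delta}:=\{z\in\R^{d}\mid\operatorname{dist}(z,K)\leq\delta\}$ is compact and contained in $\Omega$. For $\beta\in\N_{0}^{d}$ with $|\beta|=k$ and $x,y\in K$ with $0<|x-y|<\delta$ the segment $[x,y]$ lies in $K_{\delta}$, and applying the mean value inequality to $t\mapsto(\partial^{\beta})^{E}f_{\iota}(y+t(x-y))$ yields
\[
 \|(\partial^{\beta})^{E}f_{\iota}(x)-(\partial^{\beta})^{E}f_{\iota}(y)\|
 \leq \sqrt{d}\,|x-y|\max_{1\leq n\leq d}\sup_{z\in K_{\delta}}\|(\partial^{\beta+e_{n}})^{E}f_{\iota}(z)\|.
\]
For the remaining pairs $x,y\in K$ with $|x-y|\geq\delta$ I would bound the difference quotient crudely by $2\delta^{-1}\sup_{z\in K}\|(\partial^{\beta})^{E}f_{\iota}(z)\|$. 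Both right-hand sides, together with the plain $\mathcal{C}^{k}(K)$-quantities, are dominated by the $\mathcal{C}^{k+1}(K_{\delta})$-seminorm of $f_{\iota}$, which is bounded uniformly in $\iota$ because $(f_{\iota})_{\iota\in I}$ is bounded in $\mathcal{C}^{k+1}(\Omega,E)$. Taking suprema over all admissible pairs and over $|\beta|\leq k$ shows $\sup_{\iota\in I}|f_{\iota}|_{K}<\infty$, and since $K$ was arbitrary the net is bounded in $\mathcal{C}^{k,1}_{loc}(\Omega,E)$.

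The only genuinely new ingredient over the scalar computation in \prettyref{cor:ext_B_unique_loc_Hoelder} b) — and the point I expect to need the most care — is the vector-valued mean value inequality, since for $E$-valued maps the equality form of the mean value theorem is unavailable. I would obtain it from the fundamental theorem of calculus combined with Hahn--Banach: writing $g(t):=(\partial^{\beta})^{E}f_{\iota}(y+t(x-y))$ and choosing a norm-one functional $e'\in E'$ with $e'(g(1)-g(0))=\|g(1)-g(0)\|$, one has $\|g(1)-g(0)\|=|\int_{0}^{1}(e'\circ g)'(t)\,\d t|\leq\sup_{t\in[0,1]}\|g'(t)\|$, and the stated estimate follows from $g'(t)=\sum_{n=1}^{d}(x_{n}-y_{n})(\partial^{\beta+e_{n}})^{E}f_{\iota}(y+t(x-y))$ by the Cauchy--Schwarz inequality. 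The remaining bookkeeping (the split into near and far pairs and the uniformity in $\iota$) is routine given the boundedness of the net in $\mathcal{C}^{k+1}(\Omega,E)$.
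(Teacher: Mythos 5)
Your proposal is correct and follows the same two-step reduction as the paper: the paper's entire proof reads that, by \prettyref{cor:ext_B_unique_loc_Hoelder} b), the net is bounded in $\mathcal{C}^{k,1}_{loc}(\Omega,E)$, and then \prettyref{cor:loc_Hoelder_Blaschke} applies. Where you differ is in how the boundedness transfer $\mathcal{C}^{k+1}\text{-bounded}\Rightarrow\mathcal{C}^{k,1}_{loc}\text{-bounded}$ is justified, and your version is actually the more careful one: \prettyref{cor:ext_B_unique_loc_Hoelder} b) as stated is a qualitative, per-function membership result, so citing it for boundedness of the whole net is a slight leap that is only licensed by the uniform constants inside its proof; your direct vector-valued mean value inequality (Hahn--Banach plus the fundamental theorem of calculus applied to $e'\circ g$, with the split into pairs $|x-y|<\delta$ and $|x-y|\geq\delta$ over $K_{\delta}$) makes this uniformity explicit, needs no weak-strong machinery since the $f_{\iota}$ are already known to be $E$-valued $\mathcal{C}^{k+1}$, and even improves the constant by avoiding the paper's separate treatment of real and imaginary parts ($C_{d}=2\sqrt{d}$ for $\K=\C$). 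One shared caveat: for $k=0$ your phrase ``case (ii) to case (ii)'' hits the restriction $k\geq 1$ in hypothesis (ii) of \prettyref{cor:loc_Hoelder_Blaschke} (derivative evaluations are not among the functionals $T^{\K}_{x}$ when $k=0$), so the case $k=0$ of (ii) is not literally covered by that corollary; the paper's own proof silently has the same issue, so this is an inherited gap rather than a defect of your argument relative to the paper.
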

\begin{proof}
By \prettyref{cor:ext_B_unique_loc_Hoelder} b) $(f_{\iota})_{\iota\in I}$ is a bounded net 
in $\mathcal{C}^{k,1}_{loc}(\Omega,E)$. 
Hence our statement is a consequence of \prettyref{cor:loc_Hoelder_Blaschke}.
\end{proof}

The preceding result directly implies a $\mathcal{C}^{\infty}$-smooth version.

\begin{cor}
Let $E$ be a Banach space and $\Omega\subset\R^{d}$ open.
If $(f_{\iota})_{\iota\in I}$ is a bounded net in $\mathcal{C}^{\infty}(\Omega,E)$ such that 
\begin{enumerate}
\item [(i)] $\lim_{\iota}f_{\iota}(x)$ exists for all $x$ in a dense subset $U\subset\Omega$, or if
\item [(ii)] $\lim_{\iota}(\partial^{e_{n}})^{E}f_{\iota}(x)$ exists for all $1\leq n\leq d$ and $x$ in 
a dense subset $U\subset\Omega$, $\Omega$ is connected and there is $x_{0}\in\Omega$ such that 
$\lim_{\iota}f_{\iota}(x_{0})$ exists, 
\end{enumerate}
then there is $f\in\mathcal{C}^{\infty}(\Omega,E)$ such that 
$(f_{\iota})_{\iota\in I}$ converges to $f$ in $\mathcal{C}^{\infty}(\Omega,E)$.
\end{cor}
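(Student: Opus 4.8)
The plan is to deduce the statement from \prettyref{cor:k+1_smooth_Blaschke} by invoking it for every $k\in\N_{0}$ and then gluing the resulting limits into one smooth function. First I would note that a net bounded in $\mathcal{C}^{\infty}(\Omega,E)$ is bounded in $\mathcal{C}^{k+1}(\Omega,E)$ for each $k\in\N_{0}$: the seminorms $|\cdot|_{K,m,\alpha}$ with $m\leq k+1$ that generate $\tau_{\mathcal{C}^{k+1}}$ form a subfamily of those generating $\tau_{\mathcal{C}^{\infty}}$, so boundedness with respect to the full family entails boundedness with respect to the subfamily. Since the hypotheses (i) resp.\ (ii) of the present corollary coincide verbatim with those of \prettyref{cor:k+1_smooth_Blaschke} and involve only $(f_{\iota})_{\iota\in I}$, its first partial derivatives and the single point $x_{0}$, they hold simultaneously for every $k$. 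Thus for each $k\in\N_{0}$ the net $(f_{\iota})_{\iota\in I}$ meets all assumptions of \prettyref{cor:k+1_smooth_Blaschke}.

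Applying \prettyref{cor:k+1_smooth_Blaschke} for each $k$ yields a function $f_{k}\in\mathcal{C}^{k,1}_{loc}(\Omega,E)\subset\mathcal{C}^{k}(\Omega,E)$ with $(f_{\iota})_{\iota\in I}\to f_{k}$ in $\mathcal{C}^{k}(\Omega,E)$. The central step is to identify all these limits. Because $\tau_{\mathcal{C}^{k}}$ is finer than $\tau_{\mathcal{C}^{0}}$, convergence in $\mathcal{C}^{k}(\Omega,E)$ implies convergence to $f_{k}$ in $\mathcal{C}^{0}(\Omega,E)$; as $E$ is an lcHs, $\mathcal{C}^{0}(\Omega,E)$ is Hausdorff and net limits are unique, so $f_{k}=f_{0}$ for all $k$. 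Setting $f:=f_{0}$, we obtain $f=f_{k}\in\mathcal{C}^{k}(\Omega,E)$ for every $k\in\N_{0}$, hence $f\in\mathcal{C}^{\infty}(\Omega,E)$ by the very definition of $\mathcal{C}^{\infty}$.

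It remains to observe that $(f_{\iota})_{\iota\in I}$ converges to $f$ in $\mathcal{C}^{k}(\Omega,E)$ for every $k\in\N_{0}$, which is exactly convergence in $\mathcal{C}^{\infty}(\Omega,E)$ since $\tau_{\mathcal{C}^{\infty}}$ is generated by all the seminorms $|\cdot|_{K,m,\alpha}$, $m\in\N_{0}$. I expect no genuine obstacle in this argument; the only point needing a little care is the identification $f_{k}=f_{0}$, which relies purely on the uniqueness of net limits in the Hausdorff space $\mathcal{C}^{0}(\Omega,E)$.
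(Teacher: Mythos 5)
Your proposal is correct and is exactly the argument the paper intends: the corollary is stated there without proof as a direct consequence of the preceding result, and your expansion — applying it for each $k\in\N_{0}$, identifying the limits $f_{k}$ via uniqueness of net limits in the Hausdorff space $\mathcal{C}^{0}(\Omega,E)$, and noting that convergence in every $\mathcal{C}^{k}(\Omega,E)$ is convergence in $\mathcal{C}^{\infty}(\Omega,E)$ — supplies precisely the routine details left implicit. The only point worth any care, the identification $f_{k}=f_{0}$, is handled correctly.
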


Now, we turn to weighted kernels of hypoelliptic linear partial differential operators.

\begin{cor}\label{cor:weighted_hypo_Blaschke}
Let $E$ be a Banach space, $\Omega\subset\R^{d}$ open, 
$P(\partial)^{\K}$ a hypoelliptic linear partial differential operator, 
$\nu\colon\Omega\to(0,\infty)$ continuous and $U\subset\Omega$ a set of uniqueness for 
$(\id_{\K^{\Omega}},\mathcal{C}\nu^{\infty}_{P(\partial)})$. 
If $(f_{\iota})_{\iota\in I}$ is a bounded net in $(\mathcal{C}\nu^{\infty}_{P(\partial)}(\Omega,E),|\cdot|_{\nu})$ 
such that $\lim_{\iota}f_{\iota}(x)$ exists for all $x\in U$, 
then there is $f\in\mathcal{C}\nu^{\infty}_{P(\partial)}(\Omega,E)$ such that 
$(f_{\iota})_{\iota\in I}$ converges to $f$ in $(\mathcal{C}^{\infty}_{P(\partial)}(\Omega,E),\tau_{co})$.
\end{cor}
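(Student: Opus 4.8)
The plan is to deduce this statement as a direct application of the general vector-valued Blaschke theorem \prettyref{cor:Blaschke_vector_valued}, using exactly the same auxiliary spaces and generator that were already set up in the proof of \prettyref{cor:hypo_weighted_ext_unique}. First I would choose $\F:=(\mathcal{C}^{\infty}_{P(\partial)}(\Omega),\tau_{co})$ and $\FE:=(\mathcal{C}^{\infty}_{P(\partial)}(\Omega,E),\tau_{co})$, so that $\mathcal{F}\nu(\Omega)=\mathcal{C}\nu^{\infty}_{P(\partial)}(\Omega)$ and $\mathcal{F}\nu(\Omega,E)=\mathcal{C}\nu^{\infty}_{P(\partial)}(\Omega,E)$ with the generator $(T^{E},T^{\K}):=(\id_{E^{\Omega}},\id_{\K^{\Omega}})$ for $(\mathcal{F}\nu,E)$.

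Next I would check that all hypotheses of \prettyref{cor:Blaschke_vector_valued} are met, and these are precisely the conditions already verified in the proof of \prettyref{cor:hypo_weighted_ext_unique}. Concretely, the spaces $\F$ and $\FE$ are $\varepsilon$-compatible and $(T^{E},T^{\K})$ is a strong, consistent family for $(\mathcal{F},E)$ by \prettyref{prop:co_top_isomorphism}; the space $\mathcal{F}\nu(\Omega)=\mathcal{C}\nu^{\infty}_{P(\partial)}(\Omega)$ is a Banach space by \prettyref{prop:hypo_weighted_Banach} whose closed unit ball $B_{\mathcal{F}\nu(\Omega)}$ is a compact subset of the Fr\'echet-Schwartz, hence Montel, space $\F$; and $U$ is a set of uniqueness by assumption. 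Moreover, the $\varepsilon$-compatibility of $\F$ and $\FE$ together with the consistency of the generator yields $\Feps=\FVE=\mathcal{C}\nu^{\infty}_{P(\partial)}(\Omega,E)$ as linear spaces by \prettyref{prop:mingle-mangle} c), so that the given bounded net $(f_{\iota})_{\iota\in I}$ in $(\mathcal{C}\nu^{\infty}_{P(\partial)}(\Omega,E),|\cdot|_{\nu})$ indeed lies in $\Feps$ and is bounded in $\FVE$.

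Finally, since $T^{E}=\id_{E^{\Omega}}$ one has $T^{E}(f_{\iota})(x)=f_{\iota}(x)$ for every $x\in\Omega$, so the hypothesis that $\lim_{\iota}f_{\iota}(x)$ exists for all $x\in U$ is exactly the assumption that $\lim_{\iota}T^{E}(f_{\iota})(x)$ exists for all $x\in U$ required by \prettyref{cor:Blaschke_vector_valued}. Applying that corollary then furnishes an $f\in\Feps=\mathcal{C}\nu^{\infty}_{P(\partial)}(\Omega,E)$ to which $(f_{\iota})_{\iota\in I}$ converges in $\FE=(\mathcal{C}^{\infty}_{P(\partial)}(\Omega,E),\tau_{co})$, which is the asserted conclusion. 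Because every structural ingredient has already been established in the earlier results, there is essentially no genuine obstacle in this argument; the only point demanding care is the translation of the pointwise-limit hypothesis, namely recognising that the trivial operator $T^{E}=\id$ makes the abstract condition $\lim_{\iota}T^{E}(f_{\iota})(x)$ coincide with the concrete condition $\lim_{\iota}f_{\iota}(x)$, and keeping track of which topology ($\tau_{co}$ rather than the stronger weighted norm) governs the convergence of the limit.
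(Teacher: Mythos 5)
Your proposal is correct and follows essentially the same route as the paper: the paper's proof likewise reduces the statement to \prettyref{cor:Blaschke_vector_valued}, noting that all of its hypotheses were already verified in the proof of \prettyref{cor:hypo_weighted_ext_unique}, including the identification $\Feps=\FVE=\mathcal{C}\nu^{\infty}_{P(\partial)}(\Omega,E)$ via \prettyref{prop:mingle-mangle}~c). Your additional explicit remark that $T^{E}=\id_{E^{\Omega}}$ turns the abstract condition $\lim_{\iota}T^{E}(f_{\iota})(x)$ into the pointwise hypothesis is a correct (and welcome) elaboration of a step the paper leaves implicit.
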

\begin{proof}
Our statement follows from \prettyref{cor:Blaschke_vector_valued} since by the proof of 
\prettyref{cor:hypo_weighted_ext_unique} all conditions needed are fulfilled. 
\end{proof}

For $\nu=1$ on $\Omega$ the preceding corollary is included in \cite[Corollary 4.2, p.\ 695]{F/J/W} but then 
an even better result is available, whose proof we prepare next.
For an open set $\Omega\subset\R^{d}$, an lcHs $E$ and a linear partial differential operator 
$P(\partial)^{E}\colon\mathcal{C}^{\infty}(\Omega,E)\to\mathcal{C}^{\infty}(\Omega,E)$ 
which is hypoelliptic if $E=\K$ we define the space of bounded zero solutions 
\[
 \mathcal{C}^{\infty}_{P(\partial),b}(\Omega,E):
=\{f\in\mathcal{C}^{\infty}_{P(\partial)}(\Omega,E)\;|\;\forall\;\alpha\in\mathfrak{A}:\;
   \|f\|_{\infty,\alpha}:=\sup_{x\in\Omega}p_{\alpha}(f(x))<\infty\}.
\]
Apart from the topology given by $(\|\cdot\|_{\infty,\alpha})_{\alpha\in\mathfrak{A}}$ 
there is another weighted locally convex topology 
on $\mathcal{C}^{\infty}_{P(\partial),b}(\Omega,E)$ which is of interest, namely, the one induced by the seminorms 
\[
|f|_{\widetilde{\nu},\alpha}:=\sup_{x\in\Omega}p_{\alpha}(f(x))|\widetilde{\nu}(x)|,\quad f\in\mathcal{C}^{\infty}_{P(\partial),b}(\Omega,E),
\]
for $\widetilde{\nu}\in\mathcal{C}_{0}(\Omega)$ and $\alpha\in\mathfrak{A}$.
We denote by $(\mathcal{C}^{\infty}_{P(\partial),b}(\Omega,E),\beta)$ 
the space $\mathcal{C}^{\infty}_{P(\partial),b}(\Omega,E)$ 
equipped with the strict topology $\beta$ induced by the seminorms 
$(|\cdot|_{\widetilde{\nu},\alpha})_{\widetilde{\nu}\in\mathcal{C}_{0}(\Omega),\alpha\in\mathfrak{A}}$.\label{page:strict}
Now, we phrase for $\mathcal{C}^{\infty}_{P(\partial),b}(\Omega,E)=\mathcal{C}\nu^{\infty}_{P(\partial)}(\Omega,E)$ 
with $\nu=1$ on $\Omega$ the improved version of \prettyref{cor:weighted_hypo_Blaschke}.

\begin{cor}\label{cor:strict_hypo_Blaschke}
Let $E$ be a Banach space, $\Omega\subset\R^{d}$ open, $P(\partial)^{\K}$ a hypoelliptic linear 
partial differential operator and $U\subset\Omega$ a set of uniqueness 
for $(\id_{\K^{\Omega}},\mathcal{C}^{\infty}_{P(\partial),b})$. 
If $(f_{\iota})_{\iota\in I}$ is a bounded net in $(\mathcal{C}^{\infty}_{P(\partial),b}(\Omega,E),\|\cdot\|_{\infty})$ 
such that $\lim_{\iota}f_{\iota}(x)$ exists for all $x\in U$, 
then there is $f\in\mathcal{C}^{\infty}_{P(\partial),b}(\Omega,E)$ such that 
$(f_{\iota})_{\iota\in I}$ converges to $f$ in $(\mathcal{C}^{\infty}_{P(\partial),b}(\Omega,E),\beta)$.
\end{cor}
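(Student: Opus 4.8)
The plan is to extract the limit $f$ together with $\tau_{co}$-convergence directly from \prettyref{cor:weighted_hypo_Blaschke} applied with the constant weight $\nu=1$ on $\Omega$, and then to upgrade this convergence from $\tau_{co}$ to the strict topology $\beta$ by exploiting the sup-norm boundedness of the net. First I would observe that for $\nu=1$ we have $\mathcal{C}\nu^{\infty}_{P(\partial)}(\Omega,E)=\mathcal{C}^{\infty}_{P(\partial),b}(\Omega,E)$ and $|\cdot|_{\nu}=\|\cdot\|_{\infty}$, so the hypotheses of the present corollary are precisely those of \prettyref{cor:weighted_hypo_Blaschke} for this weight, with the same set of uniqueness $U$. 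Hence there is $f\in\mathcal{C}^{\infty}_{P(\partial),b}(\Omega,E)$ with $f_{\iota}\to f$ in $(\mathcal{C}^{\infty}_{P(\partial)}(\Omega,E),\tau_{co})$. Writing $M:=\sup_{\iota\in I}\|f_{\iota}\|_{\infty}<\infty$ and using that $f$ is the pointwise limit of the $f_{\iota}$, we also get $\|f\|_{\infty}\leq M$.

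The heart of the matter is then the classical fact that on $\|\cdot\|_{\infty}$-bounded subsets the strict topology $\beta$ and $\tau_{co}$ coincide; since $E$ is a Banach space I only need the direction that $\tau_{co}$-convergence of the bounded net forces $\beta$-convergence. I would fix a nonzero $\nu\in\mathcal{C}_{0}(\Omega)$ and $\varepsilon>0$, and split $\Omega$ according to the size of $|\nu|$. Because $\nu$ vanishes at infinity there is a compact $K\subset\Omega$ with $|\nu(x)|\leq\varepsilon/(4M)$ for $x\in\Omega\setminus K$, so that $\|f_{\iota}(x)-f(x)\|\,|\nu(x)|\leq 2M\cdot\varepsilon/(4M)=\varepsilon/2$ there. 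On $K$ I invoke $\tau_{co}$-convergence: there is $\varsigma\in I$ with $\sup_{x\in K}\|f_{\iota}(x)-f(x)\|\leq\varepsilon/(2\|\nu\|_{\infty})$ for all $\iota\geq\varsigma$, hence $\|f_{\iota}(x)-f(x)\|\,|\nu(x)|\leq\varepsilon/2$ on $K$ as well. Combining both regions yields
\[
|f_{\iota}-f|_{\nu}=\sup_{x\in\Omega}\|f_{\iota}(x)-f(x)\|\,|\nu(x)|\leq\frac{\varepsilon}{2}<\varepsilon,\qquad\iota\geq\varsigma.
\]
Since $\nu\in\mathcal{C}_{0}(\Omega)$ was arbitrary (the case $\nu=0$ being trivial), this gives $f_{\iota}\to f$ in $(\mathcal{C}^{\infty}_{P(\partial),b}(\Omega,E),\beta)$, as claimed.

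I do not expect a serious obstacle here, since the entire existence part is furnished by \prettyref{cor:weighted_hypo_Blaschke}; the only genuinely new input is the coincidence of $\beta$ and $\tau_{co}$ on bounded sets, and the sole subtlety in that step is the standard decomposition of $\Omega$ into the compact region where $|\nu|$ is not small (controlled by $\tau_{co}$) and its complement (controlled by the uniform bound $M$). The points that merely require care are that the bound $\|f\|_{\infty}\leq M$ on the limit is justified by pointwise convergence, and that $\beta$-convergence is verified seminorm by seminorm, i.e.\ against each fixed $\nu\in\mathcal{C}_{0}(\Omega)$ separately; note also that $\beta$ dominates $\tau_{co}$ in general (each $|\cdot|_{K}$ is bounded by some $|\cdot|_{\nu}$ with a suitable bump $\nu\in\mathcal{C}_{0}(\Omega)$), which is exactly why the conclusion here genuinely improves \prettyref{cor:weighted_hypo_Blaschke}.
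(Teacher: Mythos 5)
Your proof is correct, but it takes a genuinely different route from the paper. The paper does not pass through \prettyref{cor:weighted_hypo_Blaschke} at all: it re-runs the abstract machinery with the strict topology as the auxiliary space, choosing $\F:=(\mathcal{C}^{\infty}_{P(\partial),b}(\Omega),\beta)$ and $\FE:=(\mathcal{C}^{\infty}_{P(\partial),b}(\Omega,E),\beta)$, invoking \cite[3.18 Proposition, p.\ 14]{kruse2018_3} for the $\varepsilon$-compatibility of this pair, and using Cooper's results \cite[Proposition 1 (viii), p.\ 586; Proposition 3, p.\ 590]{cooper1971} to see that the $\|\cdot\|_{\infty}$-bounded, $\tau_{co}$-compact unit ball $B_{\FV}$ is in fact $\beta$-compact; \prettyref{cor:Blaschke_vector_valued} then delivers $\beta$-convergence directly. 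You instead harvest the limit $f$ and $\tau_{co}$-convergence from the already-proved \prettyref{cor:weighted_hypo_Blaschke} with $\nu=1$ (whose hypotheses do match exactly, including $(f_{\iota})\subset\Feps$, which is automatic there since $\Feps=\FVE$), and then upgrade to $\beta$ by the elementary two-region estimate: off a compact set $|\nu|$ is small and the uniform bound $2M$ controls the seminorm, on the compact set $\tau_{co}$-convergence does. In effect you prove by hand the one direction of Cooper's coincidence of $\beta$ and $\tau_{co}$ on norm-bounded sets that is actually needed, so your argument is self-contained and more elementary, avoiding both the Cooper citations and the strict-topology $\varepsilon$-compatibility result. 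What the paper's route buys in exchange is structural: it exhibits the strict-topology pair as another instance of the general framework (yielding $\Feps=\FVE$ for that pair), which is then reused verbatim in the subsequent remark and in the sketched analogue for $\mathcal{C}^{[\gamma]}_{z}(\Omega,E)$ via mixed topologies, where the same compactness pattern ($\|\cdot\|$-bounded plus $\tau_{co}$-compact implies $\beta$-compact) is the key input. Your minor glossed points (the degenerate cases $M=0$ and $\nu=0$, and $\|f\|_{\infty}\leq M$ from pointwise convergence) are all trivially repaired and do not constitute gaps.
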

\begin{proof}
We take $\F:=(\mathcal{C}^{\infty}_{P(\partial),b}(\Omega),\beta)$ 
and $\FE:=(\mathcal{C}^{\infty}_{P(\partial),b}(\Omega,E),\beta)$ 
as well as $\FV:=(\mathcal{C}^{\infty}_{P(\partial),b}(\Omega),\|\cdot\|_{\infty})$ 
and $\FVE:=(\mathcal{C}^{\infty}_{P(\partial),b}(\Omega,E),\|\cdot\|_{\infty})$ 
with the weight $\nu(x):=1$, $x\in\Omega$, and generator $(\id_{E^{\Omega}},\id_{\Omega^{\K}})$ 
for $(\mathcal{F}\nu,E)$. The generator is strong and consistent for $(\mathcal{F},E)$ 
and $\F$ and $\FE$ are $\varepsilon$-compatible by \cite[3.14 Proposition, p.\ 12]{kruse2018_3}. 
The space $\FV$ is a Banach space as a closed subspace of the Banach space 
$(\mathcal{C}_{b}(\Omega),\|\cdot\|_{\infty})$. Its closed unit ball $B_{\FV}$ is $\tau_{co}$-compact because 
$(\mathcal{C}^{\infty}_{P(\partial)}(\Omega),\tau_{co})$ is a Fr\'echet--Schwartz space, in particular a Montel space. 
Thus $B_{\FV}$ is $\|\cdot\|_{\infty}$-bounded and $\tau_{co}$-compact, which implies that 
it is also $\beta$-compact by \cite[Proposition 1 (viii), p.\ 586]{cooper1971} 
and \cite[Proposition 3, p.\ 590]{cooper1971}. 
In addition, the $\varepsilon$-compatibility of $\F$ and $\FE$ 
in combination with the consistency of $(\id_{E^{\Omega}},\id_{\K^{\Omega}})$ for $(\mathcal{F},E)$ gives $\Feps=\FVE$
as linear spaces by \prettyref{prop:mingle-mangle} c), verifying our statement 
by \prettyref{cor:Blaschke_vector_valued}.
\end{proof}

A direct consequence of \prettyref{cor:strict_hypo_Blaschke} is the following remark.

\begin{rem}
Let $E$ be a Banach space, $\Omega\subset\R^{d}$ open, $P(\partial)^{\K}$ a hypoelliptic linear partial 
differential operator and $(f_{\iota})_{\iota\in I}$ a bounded net in 
$(\mathcal{C}^{\infty}_{P(\partial),b}(\Omega,E),\|\cdot\|_{\infty})$. 
Then the following statements are equivalent:
\begin{enumerate} 
\item[(i)] $(f_{\iota})$ converges pointwise,
\item[(ii)] $(f_{\iota})$ converges uniformly on compact subsets of $\Omega$,
\item[(iii)] $(f_{\iota})$ is $\beta$-convergent.  
\end{enumerate}
\end{rem}

In the case of complex-valued bounded holomorphic functions of one variable, i.e.\ $E=\C$, $\Omega\subset\C$ is open and 
$P(\partial)=\overline{\partial}$ is the Cauchy-Riemann operator, convergence w.r.t.\ $\beta$ is known as 
bounded convergence (see \cite[p.\ 13--14, 16]{rubel1971}) and
the preceding remark is included in \cite[3.7 Theorem, p.\ 246]{rubelshields1966} for connected sets $\Omega$.

A similar improvement of \prettyref{cor:Hoelder_vanish_Blaschke} for the space 
$\mathcal{C}^{[\gamma]}_{z}(\Omega,E)$ of $\gamma$-H\"older continuous functions on a metric space $(\Omega,\d)$ 
that vanish at a given point $z\in\Omega$ is possible, using the strict topology $\beta$ on 
$\mathcal{C}^{[\gamma]}_{z}(\Omega)$ given by the seminorms
\[
|f|_{\nu}:=\sup_{\substack{x,y\in\Omega\\x\neq y}}
\frac{|f(x)-f(y)|}{|x-y|^{\gamma}}|\nu(x,y)|,\quad f\in\mathcal{C}^{[\gamma]}_{z}(\Omega),
\]
for $\nu\in\mathcal{C}_{0}(\omega)$ with $\omega=\Omega^{2}\setminus\{(x,x)\;|\;x\in\Omega\}$. 
If $\Omega$ is compact and $E$ a Banach space, this follows as in \prettyref{cor:strict_hypo_Blaschke} 
from the observation that $\beta$ is the mixed topology 
$\gamma=\gamma(|\cdot|_{\mathcal{C}^{0,\gamma}(\Omega)},\tau_{co})$ 
by \cite[Theorem 3.3, p.\ 645]{vargas2018}, that a set is $\beta$-compact if and only if it 
is $|\cdot|_{\mathcal{C}^{0,\gamma}(\Omega)}$-bounded and $\tau_{co}$-compact 
by \cite[Theorem 2.1 (6), p.\ 642]{vargas2018}, the $\varepsilon$-compatibility
$(\mathcal{C}^{[\gamma]}_{z}(\Omega),\beta)\varepsilon E\cong(\mathcal{C}^{[\gamma]}_{z}(\Omega,E),\gamma\tau_{\gamma})$ 
by \cite[Theorem 4.4, p.\ 648]{vargas2018} where the topology 
$\gamma\tau_{\gamma}$ is described in \cite[Definition 4.1, p.\ 647]{vargas2018} 
and coincides with $\beta$ if $E=\K$ by \cite[Proposition 4.3 (i), p.\ 647]{vargas2018}.

Let us turn to Bloch type spaces. The result corresponding to \prettyref{cor:weighted_hypo_Blaschke} 
for Bloch type spaces reads as follows.

\begin{cor}
Let $E$ be a Banach space, $\nu\colon\D\to(0,\infty)$ continuous and $U_{\ast}\subset\D$ 
have an accumulation point in $\D$. 
If $(f_{\iota})_{\iota\in I}$ is a bounded net in $\mathcal{B}\nu(\D,E)$ 
such that $\lim_{\iota}f_{\iota}(0)$ and $\lim_{\iota}(\partial_{\C}^{1})^{E}f_{\iota}(z)$ 
exist for all $z\in U_{\ast}$, 
then there is $f\in\mathcal{B}\nu(\D,E)$ such that 
$(f_{\iota})_{\iota\in I}$ converges to $f$ in $(\mathcal{O}(\D,E),\tau_{co})$. 
\end{cor}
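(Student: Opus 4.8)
The plan is to obtain the statement as a direct specialization of the vector-valued Blaschke theorem \prettyref{cor:Blaschke_vector_valued}, carried out in exactly the Bloch type space framework already prepared for \prettyref{cor:Bloch_ext_unique}. Concretely, I would take $\mathcal{F}(\D):=(\mathcal{O}(\D),\tau_{co})$ and $\mathcal{F}(\D,E):=(\mathcal{O}(\D,E),\tau_{co})$, so that $\mathcal{F}\nu_{\ast}(\D)=\mathcal{B}\nu(\D)$ and $\mathcal{F}\nu_{\ast}(\D,E)=\mathcal{B}\nu(\D,E)$ hold with the weight $\nu_{\ast}$ on $\omega=\{0\}\cup\{(1,z)\;|\;z\in\D\}$ and the generator $(T^{E},T^{\C})$ defined before \prettyref{cor:Bloch_ext_unique}, and I would set $U:=\{0\}\cup\{(1,z)\;|\;z\in U_{\ast}\}$.

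First I would note that all the structural hypotheses of \prettyref{cor:Blaschke_vector_valued} have already been checked in the proof of \prettyref{cor:Bloch_ext_unique}: the spaces $\mathcal{F}(\D)$ and $\mathcal{F}(\D,E)$ are $\varepsilon$-compatible by \prettyref{prop:co_top_isomorphism}, the pair $(T^{E},T^{\C})$ is a strong, consistent family for $(\mathcal{F},E)$ and a generator for $(\mathcal{F}\nu_{\ast},E)$, the space $\mathcal{B}\nu(\D)$ is a Banach space by \prettyref{prop:Bloch_Banach} whose closed unit ball is compact in the Montel space $(\mathcal{O}(\D),\tau_{co})$ by \eqref{eq:Bloch}, and $U$ is a set of uniqueness for $(T^{\C},\mathcal{F}\nu_{\ast})$ by the identity theorem. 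Moreover $\mathcal{F}_{\varepsilon}\nu_{\ast}(\D,E)=\mathcal{B}\nu(\D,E)$ as linear spaces by \prettyref{prop:mingle-mangle} c), so the assumed net $(f_{\iota})_{\iota\in I}\subset\mathcal{B}\nu(\D,E)$ already lies in the mingle-mangle space on which \prettyref{cor:Blaschke_vector_valued} operates.

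The remaining step is purely a matter of translating the two convergence hypotheses into the single condition demanded by \prettyref{cor:Blaschke_vector_valued}, namely that $\lim_{\iota}T^{E}(f_{\iota})(x)$ exist for every $x\in U$. At the basepoint $x=0$ one has $T^{E}(f_{\iota})(0)=f_{\iota}(0)$, while at a node $x=(1,z)$ with $z\in U_{\ast}$ one has $T^{E}(f_{\iota})(1,z)=(\partial_{\C}^{1})^{E}f_{\iota}(z)$; thus the assumed existence of $\lim_{\iota}f_{\iota}(0)$ together with that of $\lim_{\iota}(\partial_{\C}^{1})^{E}f_{\iota}(z)$ for all $z\in U_{\ast}$ is precisely the hypothesis on $U$. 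Invoking \prettyref{cor:Blaschke_vector_valued} then produces $f\in\mathcal{F}_{\varepsilon}\nu_{\ast}(\D,E)=\mathcal{B}\nu(\D,E)$ to which $(f_{\iota})_{\iota\in I}$ converges in $(\mathcal{O}(\D,E),\tau_{co})$, which is the assertion.

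I do not expect a genuine obstacle, as this is entirely parallel to \prettyref{cor:weighted_hypo_Blaschke}; the only point needing care is the bookkeeping of the two-component index set $\omega$, so that the basepoint value and the derivative values are matched to the correct components of $T^{E}$ and the two separate limit assumptions fuse into the one set-of-uniqueness condition on $U$.
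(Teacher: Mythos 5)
Your proposal is correct and coincides with the paper's own proof, which likewise reduces the statement to \prettyref{cor:Blaschke_vector_valued} by citing that all needed hypotheses were verified in the proof of \prettyref{cor:Bloch_ext_unique}, using the same auxiliary spaces $(\mathcal{O}(\D),\tau_{co})$, $(\mathcal{O}(\D,E),\tau_{co})$, the generator $(T^{E},T^{\C})$ on $\omega=\{0\}\cup\{(1,z)\;|\;z\in\D\}$, and the set of uniqueness $U=\{0\}\cup\{(1,z)\;|\;z\in U_{\ast}\}$. Your explicit translation of the two limit assumptions into the single condition $\lim_{\iota}T^{E}(f_{\iota})(x)$ for $x\in U$ is exactly the bookkeeping the paper leaves implicit.
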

\begin{proof}
Due to the proof of \prettyref{cor:Bloch_ext_unique} all conditions needed 
to apply \prettyref{cor:Blaschke_vector_valued} are fulfilled, which proves our statement.
\end{proof}
\section{Wolff type results}
The following theorem gives us a Wolff type description of the dual of $\F$ 
and generalises \cite[Theorem 3.3, p.\ 693]{F/J/W} and its \cite[Corollary 3.4, p.\ 694]{F/J/W} 
whose proofs only need a bit of adaptation. 

\begin{thm}\label{thm:wolff}
Let $\F$ and $\FE$ be $\varepsilon$-into-compatible, 
$(T^{E},T^{\K})$ be a generator for $(\mathcal{F}\nu,E)$ and a strong, consistent family for $(\mathcal{F},E)$ 
for every Banach space $E$. 
Let $\F$ be a nuclear Fr\'echet space and $\FV$ a Banach space 
whose closed unit ball $B_{\FV}$ 
is a compact subset of $\F$ and $(x_{n})_{n\in\N}$ fixes the topology in $\FV$.
\begin{enumerate}
\item[a)] Then there is $0<\lambda\in\ell^1$ such that for every bounded $B\subset\F_{b}'$ there 
is $C\geq 1$ with 
\[
       \{\mu_{\mid\FV}\;|\;\mu\in B\}
\subset\{\sum_{n=1}^{\infty}a_{n}\nu(x_{n})T^{\K}_{x_{n}}\in\FV'
         \;|\;a\in\ell^{1},\;\forall\;n\in\N:\;|a_{n}|\leq C\lambda_{n}\}.
\]
\item[b)] Let $(\|\cdot\|_{k})_{k\in\N}$ denote the system of seminorms generating the topology of $\F$. 
Then there is a decreasing zero sequence $(\varepsilon_{n})_{n\in\N}$ such that for all $k\in\N$ 
there is $C\geq 1$ with 
\[
\|f\|_{k}\leq C \sup_{n\in\N}|T^{\K}(f)(x_{n})|\nu(x_{n})\varepsilon_{n},\quad f\in\FV. 
\]
\end{enumerate}
\end{thm}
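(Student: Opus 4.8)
The plan is to reduce both statements to a single quantitative estimate on $\FV$ and then obtain (a) and (b) by elementary duality in sequence spaces. Throughout write $\psi_{n}:=\nu(x_{n})T^{\K}_{x_{n}}\in\FV'$, so that $\|\psi_{n}\|_{\FV'}\le 1$ and $\psi_{n}(f)=T^{\K}(f)(x_{n})\nu(x_{n})$. The core of the argument is the following \emph{frame domination}: there is a fixed $0<\lambda\in\ell^{1}$ such that for every $k\in\N$ there is $C_{k}\ge 1$ with
\begin{equation*}
\|f\|_{k}\le C_{k}\sum_{n=1}^{\infty}\lambda_{n}\,|T^{\K}(f)(x_{n})|\,\nu(x_{n}),\qquad f\in\FV.\tag{$\heartsuit$}
\end{equation*}
Granting $(\heartsuit)$, everything else is formal. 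First I would record that, since $\F$ is Fréchet (hence barrelled), every bounded $B\subset\F_{b}'$ is equicontinuous, so it suffices to treat the case where $|\mu(f)|\le\|f\|_{k}$ holds for all $\mu\in B$ and some $k$.

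For part (a) I would use the $\ell^{1}$-side of $(\heartsuit)$. The map $V\colon\FV\to\ell^{1}$, $Vf:=(\lambda_{n}T^{\K}(f)(x_{n})\nu(x_{n}))_{n}$, is well defined and bounded because $\|Vf\|_{\ell^{1}}\le\|\lambda\|_{\ell^{1}}|f|_{\FV}$, and $(\heartsuit)$ reads $|\mu(f)|\le\|f\|_{k}\le C_{k}\|Vf\|_{\ell^{1}}$ for $\mu\in B$. Since $\lambda_{n}>0$ and $(x_{n})$ fixes the topology, $Vf=0$ forces $f=0$, so $\widetilde{\mu}(Vf):=\mu(f)$ is a well-defined functional on $V(\FV)\subset\ell^{1}$ of norm $\le C_{k}$; extending it by Hahn--Banach to $(\ell^{1})'=\ell^{\infty}$ yields $b=(b_{n})\in\ell^{\infty}$ with $\|b\|_{\infty}\le C_{k}$ and $\mu(f)=\sum_{n}b_{n}\lambda_{n}T^{\K}(f)(x_{n})\nu(x_{n})$ for all $f\in\FV$. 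Hence $\mu_{\mid\FV}=\sum_{n}a_{n}\nu(x_{n})T^{\K}_{x_{n}}$ with $a_{n}:=b_{n}\lambda_{n}$, where $a\in\ell^{1}$ and $|a_{n}|\le C_{k}\lambda_{n}$; this is exactly the claimed inclusion, with the fixed $\lambda$ and $C:=C_{k}$. For part (b) I would use the sup-side: choosing $\varepsilon_{n}:=(\sum_{m\ge n}\lambda_{m})^{1/2}$, which decreases to $0$, a telescoping estimate gives $L:=\sum_{n}\lambda_{n}/\varepsilon_{n}\le 2\varepsilon_{1}<\infty$, so $(\heartsuit)$ turns into
\[
\|f\|_{k}\le C_{k}\sum_{n}\tfrac{\lambda_{n}}{\varepsilon_{n}}\,\varepsilon_{n}|T^{\K}(f)(x_{n})|\nu(x_{n})\le C_{k}L\,\sup_{n}|T^{\K}(f)(x_{n})|\nu(x_{n})\varepsilon_{n},
\]
which is the assertion of (b) with the decreasing zero sequence $(\varepsilon_{n})$.

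It remains to establish $(\heartsuit)$, and this is where nuclearity enters and the main work lies. Since $\F$ is nuclear Fréchet and $B_{\FV}$ is compact in $\F$, the inclusion $\FV\hookrightarrow\F$, composed into the local Banach space $\F_{k}$ associated with $\|\cdot\|_{k}$, is a nuclear operator by the Grothendieck--Pietsch criterion; a nuclear representation gives $\|f\|_{k}\le\sum_{j}\sigma_{j}|g_{j}'(f)|$ with $\sigma\in\ell^{1}$ and equicontinuous functionals $g_{j}'$. On the other hand, the fixing-topology hypothesis, via the bipolar identity $B_{\FV}^{\circ\F'}=\oacx\{T^{\K}_{x}(\cdot)\nu(x)\mid x\in\omega\}$ already used in \prettyref{prop:mingle-mangle} b), expresses each $g_{j}'$ through the frame $(\psi_{n})_{n}$; summing the resulting frame coefficients against the summable weights $\sigma_{j}$ produces $\lambda^{(k)}\in\ell^{1}$ with $\|f\|_{k}\le\sum_{n}\lambda^{(k)}_{n}|T^{\K}(f)(x_{n})|\nu(x_{n})$. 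Finally I would pass from the countably many $\lambda^{(k)}$ to a single weight by a diagonal choice such as $\lambda_{n}:=2^{-n}+\sum_{k}\lambda^{(k)}_{n}/(2^{k}\|\lambda^{(k)}\|_{\ell^{1}})$, which lies in $\ell^{1}$, is strictly positive, and satisfies $\lambda^{(k)}_{n}\le M_{k}\lambda_{n}$ for suitable $M_{k}$; absorbing $M_{k}$ into $C_{k}$ yields $(\heartsuit)$.

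The hard part is the passage from the nuclear representation to \emph{summable-in-$n$} frame weights. The bipolar identity only places the restrictions of dual elements in the $\kappa(\F',\FV)$-closed absolutely convex hull of the frame, which a priori controls the coefficients in a weak$^{*}$ sense but not in $\ell^{1}$-norm; compactness of $B_{\FV}$ alone yields only estimates of the shape $\|f\|_{k}\le\delta|f|_{\FV}+C(\delta)\max_{n\le N(\delta)}|T^{\K}(f)(x_{n})|\nu(x_{n})$, whose finite-frame parts have uncontrolled $\ell^{1}$-mass as $\delta\to 0$. Converting this into a genuinely summable $\lambda^{(k)}$ requires the quantitative nuclear---rather than merely compact---structure of $\F$ to bound the growth of $N(\delta)$ and $C(\delta)$, and this is precisely the step where the proof of \cite[Theorem 3.3, p.\ 693]{F/J/W} is adapted to the present abstract framework. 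Once $(\heartsuit)$ is secured, parts (a) and (b) follow as above.
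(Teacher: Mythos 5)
Your formal reductions are correct: deriving part a) from $(\heartsuit)$ by Hahn--Banach in $\ell^{1}$ (using $(\ell^{1})'=\ell^{\infty}$ and injectivity of $V$ from the fixing-the-topology hypothesis) and part b) via $\varepsilon_{n}:=\bigl(\sum_{m\geq n}\lambda_{m}\bigr)^{1/2}$ with the telescoping bound $\sum_{n}\lambda_{n}/\varepsilon_{n}\leq 2\varepsilon_{1}$ both work. But note that $(\heartsuit)$ is essentially \emph{equivalent} to part a): applying a) to the polar $\widetilde{B}_{k}^{\circ}$ and using $\|f\|_{k}=\sup_{\mu\in\widetilde{B}_{k}^{\circ}}|\mu(f)|$ recovers $(\heartsuit)$. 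So the entire content of the theorem has been repackaged into $(\heartsuit)$, and your argument for $(\heartsuit)$ breaks at exactly the point you flag yourself: the passage from the nuclear representation $\|f\|_{k}\leq\sum_{j}\sigma_{j}|g_{j}'(f)|$ to $\ell^{1}$-summable frame coefficients. The bipolar identity only places the (normalised) restrictions of dual elements into the $\kappa(\F',\FV)$-closed absolutely convex hull $\oacx\{T^{\K}_{x}(\cdot)\nu(x)\;|\;x\in\omega\}$ --- taken over all of $\omega$, not even over the sequence $(x_{n})$ --- and elements of this weak closure need not admit any representation $\sum_{n}a_{n}\nu(x_{n})T^{\K}_{x_{n}}$ with $a\in\ell^{1}$, let alone with controlled $\ell^{1}$-norm. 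Your closing paragraph defers precisely this step to ``adapting the proof of \cite[Theorem 3.3, p.\ 693]{F/J/W}'', i.e.\ to the very result being generalised, so the decisive argument is named but not supplied.

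The device that fills this hole in the paper is of a different nature, not a refinement of the bipolar estimate. One introduces the Banach space $E:=j(\ell^{1}/\ker j_{1})$ of exactly those functionals on $\FV$ of the form $\sum_{n}a_{n}\nu(x_{n})T^{\K}_{x_{n}}$ with $a\in\ell^{1}$, normed as a quotient of $\ell^{1}$, and applies the operator extension result \prettyref{prop:ext_B_fix_top} --- whose hypothesis that $B_{1}^{\circ Z}$ be bounded is where fixing the topology enters --- to the tautological map $\mathsf{A}\colon T^{\K}_{x_{n}}(\cdot)\nu(x_{n})\mapsto j([e_{n}])$ with $G:=\FV\subset E'$. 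The extension $\widehat{\mathsf{A}}\in\F\varepsilon E$ is then identified (via $\sigma(\F',\FV)$-density of the span of the frame and reflexivity of the nuclear Fr\'echet space $\F$) with the restriction map $\mu\mapsto\mu_{\mid\FV}$, which shows at one stroke that \emph{every} $\mu\in\F'$ has a genuine $\ell^{1}$-representation, with $\widehat{\mathsf{A}}$ bounded on bounded sets. The uniform summable majorant then comes from nuclearity in the form of the representation $\mu=\sum_{k}\gamma_{k}w_{k}'(\mu)\mu_{k}$, $\gamma\in\ell^{1}$, valid for all $\mu$ in a bounded set $B\subset\F_{b}'$, combined with lifting the bounded sequence $([\beta^{(k)}])_{k}\subset E$ through the quotient map to a bounded sequence in $\ell^{1}$; setting $\rho_{n}:=\sum_{k}\gamma_{k}|\beta^{(k)}_{n}|$ gives $\rho\in\ell^{1}$ dominating the coefficients uniformly over $B$, and a fundamental sequence of bounded sets in the DFS-space $\F_{b}'$ yields the single $\lambda$. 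None of these steps (the quotient-space target, the extension theorem, the identification with the restriction map, the lifting) appear in your sketch, and your observation that the ``quantitative nuclear structure'' must control $N(\delta)$ and $C(\delta)$ does not by itself produce them; as it stands, $(\heartsuit)$, and with it the theorem, remains unproved.
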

\begin{proof}
We start with part a). Let $B_{1}:=\{T^{\K}_{x_{n}}(\cdot)\nu(x_{n})\;|\;n\in\N\}\subset\F'$, 
$X:=\operatorname{span} B_{1}$, $Y:=\F$, 
$Z:=\FV$ and $E_{1}:=\{\sum_{n=1}^{\infty}a_{n}\nu(x_{n})T^{\K}_{x_{n}}\;|\;a\in\ell^{1}\}$. 
From
\[
|j_{1}(a)(f)|:=|\sum_{n=1}^{\infty}a_{n}\nu(x_{n})T^{\K}_{x_{n}}(f)|\leq\sup_{n\in\N}|T^{\K}(f)(x_{n})|\nu(x_{n})\|a\|_{\ell^{1}}
\leq |f|_{\FV}\|a\|_{\ell^{1}}
\]
for all $f\in\FV$ and $a\in\ell^{1}$ it follows that $E_{1}$ is a linear subspace of $\FV'$ 
and the continuity of the map $j_{1}\colon \ell^{1}\to\FV'$ where $\FV'$ is equipped 
with the operator norm. In addition, we deduce that the linear map 
$j\colon \ell^{1}/\ker j_{1}\to\FV'$, $j([a]):=j_{1}(a)$, where 
$[a]$ denotes the equivalence class of $a\in\ell^{1}$ in the quotient space $\ell^{1}/\ker j_{1}$,
is continuous w.r.t.\ the quotient norm since
\[
\|j([a])\|_{\FV'}\leq \inf_{b\in\ell^{1},[b]=[a]}\|b\|_{\ell^{1}}=\|[a]\|_{\ell^{1}/\ker j_{1}}.
\]
By setting $E:=j(\ell^{1}/\ker j_{1})$ and $\|j([a])\|_{E}:=\|[a]\|_{\ell^{1}/\ker j_{1}}$, $a\in\ell^{1}$, 
and observing that $\ell^{1}/\ker j_{1}$ is a Banach space, we obtain that $E$ is also a Banach space, 
which is continuously embedded in $\FV'$.

We denote by $\mathsf{A}\colon X\to E$ the restriction to $Z=\FV$ determined by 
\[
\mathsf{A}(T^{\K}_{x_{n}}(\cdot)\nu(x_{n})):=T^{\K}_{x_{n}}(\cdot)_{\mid\FV}\nu(x_{n})=j([e_{n}])
\]
where $e_{n}$ is the $n$-th unit sequence in $\ell^{1}$. We consider $\FV$ as a subspace 
of $E'$ via 
\[
f(j([a])):=j([a])(f)=\sum_{n=1}^{\infty}a_{n}\nu(x_{n})T^{\K}(f)(x_{n}),\quad a\in\ell^{1},
\]
for $f\in\FV$. The space $G:=\FV$ clearly separates the points of $E$, 
thus is $\sigma(E',E)$-dense and  
\[
 (f\circ \mathsf{A})(T^{\K}_{x_{n}}(\cdot)\nu(x_{n})) 
=\mathsf{A}(T^{\K}_{x_{n}}(\cdot)\nu(x_{n}))(f)
=j([e_{n}])(f)
=f(j([e_{n}]))
\]
for all $n\in\N$. Hence we may consider $f\circ \mathsf{A}$ by identification with $f$ as an element of 
$Z=\FV$ for all $f\in G=\FV$. It follows from \prettyref{prop:ext_B_fix_top} that there is 
a unique extension $\widehat{\mathsf{A}}\in\F\varepsilon E$ of $\mathsf{A}$ such that 
$S(\widehat{\mathsf{A}})\in\Feps$. 

For each $e'\in E'$ there are $C_{0},C_{1}>0$ and an absolutely convex compact set $K\subset\F$ such that
\[
|(e'\circ\widehat{\mathsf{A}})(\mu)|\leq C_{0}\|\widehat{\mathsf{A}}(\mu)\|_{E}\leq C_{0}C_{1}\sup_{f\in K}|\mu(f)|
\]
for all $\mu\in\F'$, implying $e'\circ \widehat{\mathsf{A}}\in(\F_{b}')'$. 
Due to the reflexivity of the nuclear Fr\'echet space $\F$ we obtain 
$e'\circ \widehat{\mathsf{A}}\in\F$ for each $e'\in E'$. 
Further, for each $e'\in E'$ we have 
\begin{align*}
  \|e'\circ\widehat{\mathsf{A}}\|_{\FV}
&=\sup_{x\in\omega}|T^{\K}(e'\circ\widehat{\mathsf{A}})(x)|\nu(x)
 =\sup_{x\in\omega}|(e'\circ\widehat{\mathsf{A}})(T^{\K}_{x}(\cdot)\nu(x))|\\
&\leq C_{0}\sup_{x\in\omega}\|\widehat{\mathsf{A}}(T^{\K}_{x}(\cdot)\nu(x))\|_{E}<\infty
\end{align*}
since $\widehat{\mathsf{A}}(B_{\FV}^{\circ \F'})$ is bounded in $E$. 
This yields $e'\circ\widehat{\mathsf{A}}\in\FV$ for each $e'\in E'$. In particular, we get that
$\widehat{\mathsf{A}}$ is $\sigma(\F',\FV)$-$\sigma(E,E')$ continuous. 
The restriction $r\colon\F'\to \FV'$, $r(\mu):=\mu_{\mid \FV}$, 
is $\sigma(\F',\FV)$-$\sigma(\FV',\FV)$ continuous 
and coincides with $\widehat{\mathsf{A}}$ on the $\sigma(\F',\FV)$-dense subspace 
$X=\operatorname{span} B_{1}\subset\F'$. 
Therefore $\widehat{\mathsf{A}}(\mu)=r(\mu)=\mu_{\mid \FV}$ for all $\mu\in\F'$.

Let $B$ be an absolutely convex, closed and bounded subset of $\F_{b}'$. 
We endow $W:=\operatorname{span} B$ with the Minkowski functional of $B$. 
Due to the nuclearity of $\F$, there are an absolutely convex, closed and bounded subset 
$V\subset\F_{b}'$, $(w_{k}')_{k\in\N}\subset B_{W'}$, $(\mu_{k})_{k\in\N}\subset V$ 
and $0\leq \gamma\in\ell^{1}$ such that
\[
\mu=\sum_{k=1}^{\infty}\gamma_{k}w_{k}'(\mu)\mu_{k},\quad \mu\in B,
\]
by \cite[2.9.1 Theorem, p.\ 134, 2.9.2 Definition, p.\ 135]{bogachev2017}. 
The boundedness of $\widehat{\mathsf{A}}(V)$ in $E$ and the definition of $E$ give us a bounded sequence $([\beta^{(k)}])_{k\in\N}\subset E$ 
with 
\[
{\mu_{k}}_{\mid \FV}=\widehat{\mathsf{A}}(\mu_{k})=\sum_{n=1}^{\infty}\beta^{(k)}_{n}\nu(x_{n})T^{\K}_{x_{n}}
\]
for all $k\in\N$. The sequence $(\beta^{(k)})_{k\in\N}\subset\ell^{1}$ is also bounded by 
\cite[Remark 5.11, p.\ 36]{meisevogt1997} 
and we set $\rho_{n}:=\sum_{k=1}^{\infty}\gamma_{k}|\beta^{(k)}_{n}|$ for $n\in\N$. 
With $\rho:=(\rho_{n})_{n\in\N}$ we have
\[
\|\rho\|_{\ell^{1}}=\sum_{n=1}^{\infty}\sum_{k=1}^{\infty}\gamma_{k}|\beta^{(k)}_{n}|
\leq \sum_{n=1}^{\infty}\sup_{l\in\N}|\beta^{(l)}_{n}|\sum_{k=1}^{\infty}\gamma_{k}
=\sup_{l\in\N}\|\beta^{(l)}\|_{\ell^{1}}\|\gamma\|_{\ell^{1}}<\infty,
\]
which means that $\rho\in\ell^{1}$. 
For every $\mu\in B$ we set $a_{n}:=\sum_{k=1}^{\infty}\gamma_{k}w_{k}'(\mu)\beta^{(k)}_{n}$, $n\in\N$, 
and conclude that $a\in\ell^{1}$ with $|a_{n}|\leq\rho_{n}$ for all $n\in\N$ and
\begin{equation}\label{eq:wolff}
\mu_{\mid \FV}=\sum_{n=1}^{\infty}a_{n}\nu(x_{n})T^{\K}_{x_{n}}.
\end{equation}
The strong dual $\F_{b}'$ of the Fr\'echet--Schwartz space $\F$ is a DFS-space 
and thus there is a fundamental sequence of bounded sets $(B_{l})_{l\in\N}$ in $\F_{b}'$ 
by \cite[Proposition 25.19, p.\ 303]{meisevogt1997}. Due to our preceding results there is $\rho^{(l)}\in\ell^{1}$ 
with \eqref{eq:wolff} for each $l\in\N$. Finally, part a) follows from choosing $0<\lambda\in\ell^{1}$ such that 
each $\rho^{(l)}$ is componentwise smaller than a multiple of $\lambda$, i.e.\ we choose $\lambda$ in a way that 
there is $C_{l}\geq 1$ with $\rho^{(l)}_{n}\leq C_{l}\lambda_{n}$ for all $n\in\N$.

Let us turn to part b). We choose $\lambda\in\ell^{1}$ from part a) and a decreasing zero sequence 
$(\varepsilon_{n})_{n\in\N}$ such that $(\tfrac{\lambda_{n}}{\varepsilon_{n}})_{n\in\N}$ still belongs to $\ell^{1}$.
For $k\in\N$ we set 
\[
\widetilde{B}_{k}:=\{f\in\F\;|\;\|f\|_{k}\leq 1\}
\]
and note that the polar $\widetilde{B}_{k}^{\circ}$ is bounded in $\F_{b}'$. 
Due to part a) there exists $C\geq 1$ such that 
\[
\widehat{\mathsf{A}}(\widetilde{B}_{k}^{\circ})
\subset
\{\sum_{n=1}^{\infty}a_{n}\nu(x_{n})T^{\K}_{x_{n}}\in\FV'
         \;|\;a\in\ell^{1},\;\forall\;n\in\N:\;|a_{n}|\leq C\lambda_{n}\}.
\]
By \cite[Proposition 22.14, p.\ 256]{meisevogt1997} the formula 
\[
\|f\|_{k}=\sup_{y'\in \widetilde{B}_{k}^{\circ}}|y'(f)|,\quad f\in\F,
\]
is valid and hence 
\begin{align*}
  \|f\|_{k}
&=\sup_{y'\in \widetilde{B}_{k}^{\circ}}|r(y')(f)|
 =\sup_{y'\in \widetilde{B}_{k}^{\circ}}|\widehat{\mathsf{A}}(y')(f)|
 \leq C\sup_{\substack{a\in\ell^{1}\\ |a_{n}|\leq \lambda_{n}}}
      \bigl|\sum_{n=1}^{\infty}a_{n}\nu(x_{n})T^{\K}(f)(x_{n})\bigr|\\
&\leq C\Bigl\|\Bigl(\frac{\lambda_{n}}{\varepsilon_{n}}\Bigr)_{n}\Bigr\|_{\ell^{1}}
       \sup_{n\in\N}|T^{\K}(f)(x_{n})|\nu(x_{n})\varepsilon_{n}
\end{align*}
for all $f\in\FV$.
\end{proof}

\begin{rem}
The proof of \prettyref{thm:wolff} shows it is not needed that the assumption 
that $\F$ and $\FE$ are $\varepsilon$-into-compatible, 
$(T^{E},T^{\K})$ is a generator for $(\mathcal{F}\nu,E)$ and a strong, consistent family for $(\mathcal{F},E)$ 
is fulfilled for every Banach space $E$. It is sufficient that it is fulfilled for the Banach space 
$E:=j(\ell^{1}/\ker j_{1})$.
\end{rem}

We recall from \eqref{eq:frame} that for a positive sequence $\nu:=(\nu_{n})_{n\in\N}$ and an lcHs $E$ we have
\[
\ell\nu(\N,E)=\{x=(x_{n})_{n\in\N}\in E^{\N}\;|\;\forall\;\alpha\in\mathfrak{A}:\;
\|x\|_{\alpha}=\sup_{n\in\N}p_{\alpha}(x_{n})\nu_{n}<\infty\}.
\]
Further, we equip the space $E^{\N}$ of all sequences in $E$ with the topology of pointwise convergence, i.e.\ the 
topology generated by the seminorms 
\[
|x|_{k,\alpha}:=\sup_{1\leq n\leq k}p_{\alpha}(x_{n}),\quad x=(x_{n})_{n\in\N}\in E^{\N},
\]
for $k\in\N$ and $\alpha\in\mathfrak{A}$.

\begin{cor}
Let $\nu:=(\nu_{n})_{n\in\N}$ be a positive sequence. 
\begin{enumerate}
\item[a)] Then there is $0<\lambda\in\ell^1$ such that for every bounded $B\subset(\K^{\N})_{b}'$ there 
is $C\geq 1$ with 
\[
       \{\mu_{\mid\ell\nu(\N)}\;|\;\mu\in B\}
\subset\{\sum_{n=1}^{\infty}a_{n}\nu_{n}\delta_{n}\in\ell\nu(\N)'
         \;|\;a\in\ell^{1},\;\forall\;n\in\N:\;|a_{n}|\leq C\lambda_{n}\}.
\]
\item[b)] Then there is a decreasing zero sequence $(\varepsilon_{n})_{n\in\N}$ such that for all $k\in\N$ 
there is $C\geq 1$ with 
\[
\sup_{1\leq n\leq k}|x_{n}|\leq C \sup_{n\in\N}|x_{n}|\nu_{n}\varepsilon_{n},\quad x=(x_{n})_{n\in\N}\in\ell\nu(\N). 
\]
\end{enumerate}
\end{cor}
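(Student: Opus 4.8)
The plan is to obtain the statement as a direct specialisation of \prettyref{thm:wolff} to the sequence setting. First I would set $\Omega:=\omega:=\N$, take $\F:=(\K^{\N},\tau_{co})$ and $\FE:=(E^{\N},\tau_{co})$ equipped with the topology of pointwise convergence, and choose the generator $(T^{E},T^{\K}):=(\id_{E^{\N}},\id_{\K^{\N}})$ together with the weight $\nu=(\nu_{n})_{n\in\N}$. With these choices one has $T^{\K}_{n}=\delta_{n}$, and unwinding \prettyref{def:standard_space} gives $\FV=\ell\nu(\N)$ and $\FVE=\ell\nu(\N,E)$, while the seminorms $\|\cdot\|_{k}$ on $\F$ are precisely $\sup_{1\leq n\leq k}|x_{n}|$ as recalled just above the corollary. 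Since $T^{\K}_{x_{n}}=\delta_{n}$, $\nu(x_{n})=\nu_{n}$ and $T^{\K}(f)(x_{n})=x_{n}$, the conclusions of \prettyref{thm:wolff} a) and b) then read verbatim as parts a) and b) of the corollary, with $\F_{b}'=(\K^{\N})_{b}'$.

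It therefore remains to verify the hypotheses of \prettyref{thm:wolff}. The space $(\K^{\N},\tau_{co})$ is a nuclear Fr\'echet space, being the countable product of copies of $\K$, and $\ell\nu(\N)$ is a Banach space as a weighted $\ell^{\infty}$-space. For the compactness of its closed unit ball I would observe that
\[
B_{\ell\nu(\N)}=\{x\in\K^{\N}\;|\;\forall\,n\in\N:\;|x_{n}|\leq\nu_{n}^{-1}\}
=\prod_{n\in\N}\{z\in\K\;|\;|z|\leq\nu_{n}^{-1}\},
\]
which is a product of compact discs and hence compact in $(\K^{\N},\tau_{co})$ by Tychonoff's theorem. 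Taking $U:=\N$, i.e.\ $x_{n}:=n$, the set $\{x_{n}\;|\;n\in\N\}$ fixes the topology in $\ell\nu(\N)$ trivially, with $C=1$ and equality in \prettyref{def:fix_top_1}, because $|x|_{\ell\nu(\N)}=\sup_{n\in\N}|x_{n}|\nu_{n}=\sup_{n\in U}|T^{\K}(x)(n)|\nu(n)$.

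Finally I would check that $\F$ and $\FE$ are $\varepsilon$-compatible and that $(T^{E},T^{\K})$ is a strong, consistent generator for $(\mathcal{F}\nu,E)$ for every Banach space $E$. Since $T^{\K}=\id$, the consistency identity $T^{E}S(u)(n)=u(\delta_{n})$ and the strength identity $T^{\K}(e'\circ f)=e'\circ T^{E}(f)$ hold by the very definition of $S$, while the isomorphism $(\K^{\N},\tau_{co})\varepsilon E\cong(E^{\N},\tau_{co})$ is the standard sequence-space instance of the $\varepsilon$-product for the nuclear Fr\'echet space $\K^{\N}$. With all structural hypotheses in place, \prettyref{thm:wolff} applies and yields both parts. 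The only point demanding a little care --- and the closest thing to an obstacle --- is making the identifications $\FV=\ell\nu(\N)$, $\|f\|_{k}=\sup_{1\leq n\leq k}|x_{n}|$ and $T^{\K}_{x_{n}}=\delta_{n}$ fully precise, so that the abstract conclusions of \prettyref{thm:wolff} translate literally into the claimed sequence-space statements; everything else is a routine verification of the hypotheses.
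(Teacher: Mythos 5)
Your proposal is correct and follows essentially the same route as the paper: specialise \prettyref{thm:wolff} with $\F=\K^{\N}$, $\FE=E^{\N}$, the identity generator and weight $(\nu_{n})_{n\in\N}$, noting that $\K^{\N}$ is a nuclear Fr\'echet space, $\ell\nu(\N)$ is Banach, the $\varepsilon$-compatibility holds for every Banach space $E$, and $U=\N$ fixes the topology trivially. The only (harmless) variation is your compactness argument: you identify $B_{\ell\nu(\N)}$ as the product $\prod_{n\in\N}\{z\in\K\;|\;|z|\leq\nu_{n}^{-1}\}$ and invoke Tychonoff, whereas the paper shows $B_{\ell\nu(\N)}$ is bounded and closed in $\K^{\N}$ and uses that this nuclear Fr\'echet space is Montel.
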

\begin{proof}
We take $\mathcal{F}(\N):=\K^{\N}$ 
and $\mathcal{F}(\N,E):=E^{\N}$ 
as well as $\mathcal{F}\nu(\N):=\ell\nu(\N)$ and 
$\mathcal{F}\nu(\N,E):=\ell\nu(\N,E)$
where $(T^{E},T^{\K}):=(\operatorname{\id}_{E^{\N}},\operatorname{\id}_{\K^{\N}})$ is the generator 
for $(\mathcal{F}\nu,E)$.
We remark that $\mathcal{F}(\N)$ and $\mathcal{F}(\N,E)$ are $\varepsilon$-compatible 
and $(T^{E},T^{\K})$ is a strong, consistent family for $(\mathcal{F},E)$ by \cite[2.4 Theorem (2), p.\ 138--139]{B2}
for every Banach space $E$. Moreover, $\mathcal{F}\nu(\N)=\ell\nu(\N)$ is a Banach space by 
\cite[Lemma 27.1, p.\ 326]{meisevogt1997} since $\ell\nu(\N)=\lambda^{\infty}(A)$ 
with the K\"othe matrix $A:=(a_{n,k})_{n,k\in\N}$ given by $a_{n,k}:=\nu_{n}$ for all $n,k\in\N$. 
In addition, we have for every $k\in\N$
\[
\sup_{1\leq n\leq k}|x_{n}|\leq \sup_{1\leq n\leq k}\nu_{n}^{-1}|x|_{\nu} \leq \sup_{1\leq n\leq k}\nu_{n}^{-1},
\quad x=(x_{n})_{n\in\N}\in B_{\mathcal{F}\nu(\N)},
\]
which means that $B_{\mathcal{F}\nu(\N)}$ is bounded in $\mathcal{F}(\N)$.  
The space $\mathcal{F}(\N)=\K^{\N}$ is a nuclear Fr\'echet space and 
$B_{\mathcal{F}\nu(\N)}$ is obviously closed in $\K^{\N}$. 
Thus the bounded and closed set $B_{\mathcal{F}\nu(\N)}$ is compact in $\mathcal{F}(\N)$,
implying our statement by \prettyref{thm:wolff}.
\end{proof}

\begin{cor}
Let $\Omega\subset\R^{d}$ be open, $P(\partial)^{\K}$ a hypoelliptic linear partial differential operator, 
$\nu\colon\Omega\to(0,\infty)$ continuous and $(x_{n})_{n\in\N}$ fix the topology 
in $\mathcal{C}\nu^{\infty}_{P(\partial)}(\Omega)$.
\begin{enumerate}
\item[a)] Then there is $0<\lambda\in\ell^1$ such that for every bounded 
$B\subset(\mathcal{C}^{\infty}_{P(\partial)}(\Omega),\tau_{co})_{b}'$ there is $C\geq 1$ with 
\[
       \{\mu_{\mid\mathcal{C}\nu^{\infty}_{P(\partial)}(\Omega)}\;|\;\mu\in B\}
\subset\{\sum_{n=1}^{\infty}a_{n}\nu(x_{n})\delta_{x_{n}}\in\mathcal{C}\nu^{\infty}_{P(\partial)}(\Omega)'
         \;|\;a\in\ell^{1},\;\forall\;n\in\N:\;|a_{n}|\leq C\lambda_{n}\}.
\]
\item[b)] Then there is a decreasing zero sequence $(\varepsilon_{n})_{n\in\N}$ such that 
for all compact $K\subset\Omega$ there is $C\geq 1$ with 
\[
\sup_{x\in K}|f(x)|\leq C \sup_{n\in\N}|f(x_{n})|\nu(x_{n})\varepsilon_{n},
\quad f\in\mathcal{C}\nu^{\infty}_{P(\partial)}(\Omega). 
\]
\end{enumerate}
\end{cor}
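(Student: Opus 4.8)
The plan is to derive both assertions as the concrete instance of \prettyref{thm:wolff} attached to the setting of \prettyref{cor:hypo_weighted_ext_unique}. First I would take $\F:=(\mathcal{C}^{\infty}_{P(\partial)}(\Omega),\tau_{co})$ and $\FE:=(\mathcal{C}^{\infty}_{P(\partial)}(\Omega,E),\tau_{co})$, whence $\mathcal{F}\nu(\Omega)=\mathcal{C}\nu^{\infty}_{P(\partial)}(\Omega)$ and $\mathcal{F}\nu(\Omega,E)=\mathcal{C}\nu^{\infty}_{P(\partial)}(\Omega,E)$ with generator $(T^{E},T^{\K})=(\id_{E^{\Omega}},\id_{\K^{\Omega}})$. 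Under this identification $T^{\K}_{x_{n}}=\delta_{x_{n}}$ and $T^{\K}(f)(x_{n})=f(x_{n})$, so the abstract formulas of \prettyref{thm:wolff} a) and b) specialise verbatim to the claimed inclusion and estimate.

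Next I would verify the hypotheses of \prettyref{thm:wolff}. The proof of \prettyref{cor:hypo_weighted_ext_unique} already provides, for every locally complete $E$ and hence for every Banach space $E$, the $\varepsilon$-compatibility of $\F$ and $\FE$, the fact that $(T^{E},T^{\K})$ is a strong, consistent family for $(\mathcal{F},E)$ and a generator for $(\mathcal{F}\nu,E)$, that $\mathcal{C}\nu^{\infty}_{P(\partial)}(\Omega)$ is a Banach space, and that its closed unit ball is a $\tau_{co}$-compact subset of $\F$; the requirement that $(x_{n})_{n\in\N}$ fix the topology in $\mathcal{C}\nu^{\infty}_{P(\partial)}(\Omega)$ is part of the present hypotheses.

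The one new ingredient, and the step I would single out for care, is the nuclearity of $\F$, which \prettyref{cor:hypo_weighted_ext_unique} did not need; there only the Fr\'echet-Schwartz, hence Montel, property was invoked. Here I would observe that $\mathcal{C}^{\infty}(\Omega)$ is a nuclear Fr\'echet space, that $\mathcal{C}^{\infty}_{P(\partial)}(\Omega)$ is a closed subspace of it in the topology $\tau_{\mathcal{C}^{\infty}}$ and therefore itself nuclear, and that $\tau_{co}=\tau_{\mathcal{C}^{\infty}}$ on this kernel by hypoellipticity (as recalled in \prettyref{prop:co_top_isomorphism}); consequently $\F$ is a nuclear Fr\'echet space. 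With this, \prettyref{thm:wolff} a) yields part a).

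For part b) I would fix a compact exhaustion $(K_{k})_{k\in\N}$ of $\Omega$, so that the seminorms $\|f\|_{k}:=\sup_{x\in K_{k}}|f(x)|$ form a fundamental system generating $\tau_{co}$ on $\F$. Then \prettyref{thm:wolff} b) provides a decreasing zero sequence $(\varepsilon_{n})_{n\in\N}$ and, for each $k$, a constant $C\geq 1$ with $\|f\|_{k}\leq C\sup_{n\in\N}|f(x_{n})|\nu(x_{n})\varepsilon_{n}$ for all $f\in\mathcal{C}\nu^{\infty}_{P(\partial)}(\Omega)$. Given an arbitrary compact $K\subset\Omega$, choosing $k$ with $K\subset K_{k}$ gives $\sup_{x\in K}|f(x)|\leq\|f\|_{k}$ and hence the asserted bound; this final passage from the countable system of seminorms to an arbitrary compact set is the only routine bookkeeping and presents no real obstacle.
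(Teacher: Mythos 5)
Your proposal is correct and follows essentially the same route as the paper, which likewise deduces the corollary from \prettyref{thm:wolff} by citing the verifications already made in the proof of \prettyref{cor:hypo_weighted_ext_unique} together with the observation that $(\mathcal{C}^{\infty}_{P(\partial)}(\Omega),\tau_{co})$ is a nuclear Fr\'echet space. You merely make explicit two points the paper leaves implicit --- the nuclearity argument via the closed subspace $\mathcal{C}^{\infty}_{P(\partial)}(\Omega)\subset\mathcal{C}^{\infty}(\Omega)$ with $\tau_{co}=\tau_{\mathcal{C}^{\infty}}$ by hypoellipticity, and the passage in part b) from a countable fundamental system of seminorms given by a compact exhaustion to arbitrary compact $K\subset\Omega$ --- both of which are correct.
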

\begin{proof}
Due to the proof of \prettyref{cor:hypo_weighted_ext_unique} and the observation that the space 
$\F=(\mathcal{C}^{\infty}_{P(\partial)}(\Omega),\tau_{co})$ is a nuclear Fr\'echet space 
all conditions of \prettyref{thm:wolff} are fulfilled, which yields our statement.
\end{proof}
\bibliography{biblio}
\bibliographystyle{plainnat}
\end{document}